\documentclass[11pt]{article}
\usepackage{caption}
\usepackage{epsf,epsfig,amsfonts,amsgen,amsmath,amstext,amsbsy,amsopn,amsthm
}
\usepackage{ebezier,eepic}
\usepackage{color}
\usepackage{multirow}
\usepackage{mathrsfs}
\usepackage{tikz}
\usepackage{enumerate}
\usepackage{enumitem}
\usepackage{url}
\usepackage{pgf,tikz,pgfplots}
\usepackage{mathrsfs}
\usetikzlibrary{arrows}
\pagestyle{empty}

\oddsidemargin 0pt
\evensidemargin 0pt
\marginparwidth 40pt
\topmargin 0pt
\headsep 20pt
\tolerance=1000
\textheight 8.8in
\textwidth 6.6in

\newtheorem{dfn}{Definition}[section]
\newtheorem{thm}[dfn]{Theorem}
\newtheorem{lem}[dfn]{Lemma}

\newtheorem{conjecture}[dfn]{Conjecture}

\newtheorem{Dfn}{Definition}

\newtheorem{prop}[Dfn]{Proposition}

\newcommand{\dP}{{\mathcal{P}}}
\newcommand{\dF}{{\mathcal{F}}}

\newcommand{\dS}{{\mathcal{S}}}
\newcommand{\dT}{{\mathcal{T}}}
\newcommand{\dA}{{\mathcal{A}}}
\newcommand{\dB}{{\mathcal{B}}}
\newcommand{\dC}{{\mathcal{C}}}
\newcommand{\dI}{{\mathcal{I}}}

\newcommand{\dM}{{\mathcal{M}}}
\newcommand{\dN}{{\mathcal{N}}}
\newcommand{\dW}{{\mathcal{W}}}
\newcommand{\bh}{{\backslash}}
\newcommand{\<}{{\prec}}
\newcommand{\s}{{\succ}}
\newcommand{\pq}{{\preceq}}
\newcommand{\sq}{{\succeq}}
\newcommand{\dif}{{\bigtriangleup }}

\def\ls{\mathrm{LS}}
\def\rs{\mathrm{RS}}

\def\ps{\mathrm{ps}}

\newcommand{\1}{{\uppercase\expandafter{\romannumeral1}}}
\newcommand{\2}{{\uppercase\expandafter{\romannumeral2}}}
\newcommand{\3}{{\uppercase\expandafter{\romannumeral3}}}
\newcommand{\4}{{\uppercase\expandafter{\romannumeral4}}}

\let\svthefootnote\thefootnote
\newcommand\blankfootnote[1]{%
	\let\thefootnote\relax\footnotetext{#1}%
	\let\thefootnote\svthefootnote%
}

\begin{document}

\title{Non-repeated cycle lengths and Sidon sequences}
\author{
Jie Ma
~~~~~
Tianchi Yang
}

\date{}

\maketitle

\begin{abstract}
We prove a conjecture of Boros, Caro, F\"uredi and Yuster on the maximum number of edges in a 2-connected graph
without repeated cycle lengths, which is a restricted version of a longstanding problem of Erd\H{o}s.
Our proof together with the matched lower bound construction of Boros, Caro, F\"uredi and Yuster show that
this problem can be conceptually reduced to the seminal problem of finding the maximum Sidon sequences in number theory.
\end{abstract}

\blankfootnote{\noindent School of Mathematical Sciences, University of Science and Technology of China, Hefei, Anhui 230026, China. Research supported in part by NSFC grant 11622110.}

\section{Introduction}
An old problem of Erd\H{o}s since 1975 (see \cite{BM}, p. 247, Problem 11) asks to determine the maximum number $n+f(n)$ of edges
in an $n$-vertex graph in which no two cycles have the same length.
An early result of Shi \cite{shi} gives that $f(n)\geq \lfloor (\sqrt{8n-15}-3)/2\rfloor$, with equality for $2\leq n\leq 16$.
Since then Lai has obtained a series of sequential improvements on the lower bound (see \cite{BCFY,Lai} for details),
including the current record \cite{Lai} that $f(n)\geq \sqrt{238n/99}\approx 1.55\sqrt{n}$.
For the upper bound, Lai \cite{Lai90} proved $f(n)=O(\sqrt{n\log n})$, which was later reproved in \cite{CLJS}.
In a breakthrough result, Boros, Caro, F\"uredi and Yuster \cite{BCFY} deduced $f(n)\leq 1.98\sqrt{n}$ from the minimum cover of non-uniform hypergraphs,
and thus established the order of the magnitude of $f(n)$ to be $\Theta(\sqrt{n})$.
It remains open to determine $f(n)$, even asymptotically.

Another interesting problem is to consider the restricted version of Erd\H{o}s' problem for 2-connected graphs.
Following the notation in \cite{BCFY}, let $n+f_2(n)$ be the maximum number of edges in an $n$-vertex 2-connected graph in which no two cycles have the same length.
In 1988, employing the standard ear-decomposition of 2-connected graphs,
Shi \cite{shi} proved $f_2(n)\leq \sqrt{2n}+o(\sqrt{n})$.
In \cite{CLJS} Chen, Lehel, Jacobson and Shreve revisited this upper bound and used it to derive $f(n)=O(\sqrt{n\log n})$.
Using Sidon sequences in number theory,
they \cite{CLJS} also showed that $f_2(n)\geq \sqrt{n/2}-o(\sqrt{n})$.
A sequence of integers $a_1,a_2,...,a_k$ is called a {\it Sidon sequence} if all pairwise sums $a_i+a_j$ for $1\leq i\leq j\leq k$ are distinct.
Let $b_2(n)$ denote the maximum size of a Sidon subsequence of $\{1,2,...,n\}$.
It is well known that $b_2(n)=\sqrt{n}+o(\sqrt{n})$,
where the upper bound was proved by Erd\H{o}s and Tur\'an in their celebrated paper \cite{ET} (later simplified in \cite{Lin})
and the lower bound was provided by Singer \cite{Sing}.
Boros, Caro, F\"uredi and Yuster \cite{BCFY} refined the use of Sidon sequences and made a significant improvement on the lower bound of \cite{CLJS} by showing that
\begin{align}\label{equ:f2(n)}
f_2(n)\geq \sqrt{n}-o(\sqrt{n}).
\end{align}
To illustrate this somehow surprised relation between $f_2(n)$ and Sidon sequences,
we now give a sketch for the proof of \eqref{equ:f2(n)}.
Utilizing the result of Singer \cite{Sing} (together with Erd\H{o}s-Tur\'an Theorem \cite{ET}),
it is demonstrated in \cite{BCFY} that for any integer $n>0$,
there exist integers $a_1=1<a_2<...<a_k=n-1$ such that $k=\sqrt{n}-O(n^{9/20})$ and all differences $a_j-a_i$ for $1\leq i<j\leq k$ are pairwise distinct.
Construct an $n$-vertex 2-connected graph $G$ as follows:
let $V(G)=\{v_0,v_1,...,v_{n-1}\}$ and $E(G)$ consist of the edges in a Hamilton cycle $C=v_0v_1...v_{n-1}v_0$ and the edges $v_0v_{a_i}$ for all $1<i<k$.
It is easy to see that each cycle in $G$ contains two edges incident to $v_0$ (say $v_0v_{a_i}$ and $v_0v_{a_j}$) and the subpath of $C$ between $v_{a_i}$ and $v_{a_j}$ not containing $v_0$.
So all cycle lengths in $G$ are of the form $a_j-a_i+2$ for $1\leq i<j\leq k$, which are pairwise distinct.
This proves \eqref{equ:f2(n)} that $f_2(n)\geq e(G)-n=k-2=\sqrt{n}-o(\sqrt{n})$.

The authors of \cite{BCFY} further conjectured that the lower bound \eqref{equ:f2(n)} is asymptotically tight.
\begin{conjecture}[Boros, Caro, F\"uredi and Yuster, Conjecture 5.3 in \cite{BCFY}]\label{conj:f2(n)}
$$\lim_{n\to \infty} f_2(n)/\sqrt{n} =1.$$
\end{conjecture}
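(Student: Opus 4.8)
Since the lower bound $f_2(n)\ge\sqrt n-o(\sqrt n)$ is exactly \eqref{equ:f2(n)}, it suffices to establish the matching upper bound $f_2(n)\le\sqrt n+o(\sqrt n)$, from which Conjecture~\ref{conj:f2(n)} is immediate. So fix an extremal graph: an $n$-vertex $2$-connected graph $G$ with $n+t$ edges and no two cycles of the same length, where $t=f_2(n)$, and let $c$ be its circumference. Being connected, $G$ has cycle space of dimension $t+1$, and it is a standard fact that a $2$-connected graph whose cycle space has dimension $d$ contains at least $\binom{d+1}{2}$ cycles (the generalized theta graph showing this is tight). These cycles have distinct lengths lying in $\{3,\dots,c\}$, so $c\ge\binom{t+2}{2}$; as $c\le n$, this already gives a bound of the shape $t\le\sqrt{2n}+o(\sqrt n)$, essentially Shi's estimate. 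The whole difficulty is to delete the spurious factor $\sqrt2$, and the plan is to do this by showing that the set of cycle lengths of $G$ cannot be arbitrary but must encode a Sidon set contained in $\{1,\dots,(1+o(1))n\}$, after which the Erd\H{o}s--Tur\'an theorem finishes the job.

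First I would replace $G$ by a cleaner object. Repeatedly suppressing vertices of degree $2$ turns $G$ into a $2$-connected multigraph $H$ of minimum degree at least $3$ whose edges carry positive integer weights---the lengths of the suppressed paths---with total weight $n+t$; since suppression preserves the cycle-space dimension, $H$ has at most $2t$ vertices and at most $3t$ edges, and the cycles of $G$ correspond bijectively to the cycles of $H$, the length of a cycle being its total weight. In parallel it is convenient to fix a longest cycle $C$ of $G$: because $C$ is longest, any path through a bridge of $C$ joining two of its attachment vertices is no longer than the shorter $C$-arc between them, so all $t$ extra units of cycle-space dimension sit ``close to $C$'' as chords and short ears.

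The heart of the argument---and the step I expect to be the main obstacle---is to extract from this concentrated structure a Sidon set $A\subseteq\{1,\dots,(1+o(1))n\}$ with $|A|\ge t-o(\sqrt n)$. The model case is a \emph{fan}: if all chords of $C$ share an endpoint $u_0$, with their other endpoints at cyclic positions $a_1<\dots<a_m$ along $C$, then the cycle formed by the two chords to $u_{a_i}$ and $u_{a_j}$ and the $C$-arc between them has length $a_j-a_i+2$, so the no-repeated-length hypothesis forces $\{a_1,\dots,a_m\}$ to be a Sidon set, and $m\le\sqrt c+o(\sqrt c)\le\sqrt n+o(\sqrt n)$ by Erd\H{o}s--Tur\'an. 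One must then show that an arbitrary configuration of chords and short ears on $C$ producing distinct cycle lengths is asymptotically no more efficient than a single fan---exactly the claim that forbids the $\sqrt2$-factor gain left open by the crude cycle count above. I anticipate this needs a careful analysis of the cyclic interleaving pattern of the attachment points along $C$ (a pigeonhole- or Ramsey-type extraction of a large sub-family of chords behaving like a fan, together with a proof that the remaining chords, the long ears, and any genuine bridges cannot push $t$ past the fan bound), plus enough bookkeeping to place the resulting Sidon set in an interval of length $(1+o(1))n$ and not merely $O(n)$, since only the sharp constant proves the conjecture.

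Given such a set $A$, the Erd\H{o}s--Tur\'an estimate $b_2(N)=\sqrt N+o(\sqrt N)$ yields $t-o(\sqrt n)\le|A|\le\sqrt{(1+o(1))n}+o(\sqrt n)=\sqrt n+o(\sqrt n)$, so $f_2(n)=t\le\sqrt n+o(\sqrt n)$. Combined with \eqref{equ:f2(n)} this gives $\lim_{n\to\infty}f_2(n)/\sqrt n=1$, and, read alongside the construction behind \eqref{equ:f2(n)}, it exhibits the announced reduction of the problem to estimating maximum Sidon sequences.
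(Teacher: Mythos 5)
Your opening moves are fine (quoting \eqref{equ:f2(n)} for the lower bound, and the cycle-space count giving roughly $\binom{t+2}{2}\le n$, i.e.\ Shi's $t\le\sqrt{2n}+o(\sqrt n)$), but everything past that point is a plan rather than a proof, and the plan's central claim is precisely the theorem itself. The step you defer --- that from an arbitrary arrangement of chords, ears and bridges on a longest cycle one can extract a Sidon set $A\subseteq\{1,\dots,(1+o(1))n\}$ with $|A|\ge t-o(\sqrt n)$, equivalently that no configuration is asymptotically more efficient than a single fan --- is where the entire difficulty of removing the factor $\sqrt2$ lives, and you give no argument for it beyond ``pigeonhole- or Ramsey-type extraction''. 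The fan case is just the lower-bound construction read backwards; in general there need be no vertex meeting many chords/ears, and a cycle formed by two ears has length determined by \emph{four} attachment positions (plus the two ear lengths), not by a difference of two elements of one ground set, so the hypothesis ``all cycle lengths distinct'' does not hand you a Sidon set, and an extraction losing only $o(\sqrt n)$ (an additive, not multiplicative, loss --- anything weaker fails to beat $\sqrt2$) is exactly what must be constructed. The suppression to a multigraph with at most $3t$ edges and the observation that ears are shorter than the shorter arc are true but do not advance this; they are not used in, nor do they substitute for, the missing argument.

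It is also worth comparing with how the paper actually proceeds, because it is a different route from the one you sketch: the paper never extracts a Sidon set and never invokes the Erd\H{o}s--Tur\'an theorem as a black box. Instead it fixes a carefully ordered ear decomposition, forms the family $\dF$ of $(u,v)$-paths, classifies pairs as type-\1, type-\2 or normal, and proves two heavy structural statements --- Lemma~\ref{lem:normal} (almost all pairs are normal, via counting feasible triples/quadruples against the at most $n-2$ cycles) and Lemma~\ref{lem: consecutive} (a reordering into few intervals in which, for each high-degree edge, the paths containing it are nearly consecutive). Only then does it run, in Section~6, a double count of the lengths $|g_j\dif g_k|$ over normal pairs within intervals, \emph{imitating} Lindstr\"om's proof \cite{Lin} of the Erd\H{o}s--Tur\'an bound rather than applying its statement; that counting is what plays the role of your unproven ``no better than a fan'' claim. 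Since, as remarked after Conjecture~\ref{conj:f2(n)}, the conjecture itself implies the sharp Erd\H{o}s--Tur\'an upper bound, any reduction of the extremal graph problem to a literal Sidon set must carry essentially the full strength of Sections~3--5; as written, your proposal asserts that reduction but does not supply it, so it does not constitute a proof.
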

\noindent As remarked in \cite{BCFY}, this would imply ``the (difficult) upper bound in the Erd\H{o}s Tur\'an Theorem'' on Sidon sequences.
A weaker question was raised in \cite{Mar} to determine the maximum number of edges in a hamiltonian graph with no two cycles of the same length.\footnote{Note that a hamiltonian graph is naturally 2-connected.}

Our main result in this paper is to give a proof of Conjecture~\ref{conj:f2(n)} by the following.

\begin{thm}\label{thm: main}
Any $n$-vertex 2-connected graph with no two cycles of the same length contains at most $n+\sqrt{n}+o(\sqrt{n})$ edges.
\end{thm}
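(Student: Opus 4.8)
The plan is to show that $c:=e(G)-n$ satisfies $c\le\sqrt n+o(\sqrt n)$, by extracting from $G$ a set $S$ of integers contained in an interval of length $(1+o(1))n$, with $|S|=c-o(\sqrt n)$, all of whose pairwise differences are distinct. Since a set with distinct pairwise differences is precisely a Sidon set, the Erd\H{o}s--Tur\'an upper bound (equivalently, $b_2(N)=\sqrt N+o(\sqrt N)$) then gives $|S|\le\sqrt{(1+o(1))n}+o(\sqrt n)=\sqrt n+o(\sqrt n)$, hence $c\le\sqrt n+o(\sqrt n)$ and $e(G)=n+c\le n+\sqrt n+o(\sqrt n)$. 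The model to keep in mind is the matched lower-bound construction: there the cycle lengths are exactly $a_j-a_i+2$, so ``no two cycles of the same length'' is literally the assertion that the chord endpoints $\{0,a_2,\dots,a_{k-1}\}$ form a Sidon set, with excess $c=k-2$. The task is to run this correspondence backwards for an \emph{arbitrary} $2$-connected graph.

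For the mechanism I would fix a longest cycle $C$ of $G$, of length $L\le n$, together with an open ear decomposition $G=C\cup P_1\cup\dots\cup P_c$ whose first cycle is $C$ and for which each $G_i:=C\cup P_1\cup\dots\cup P_i$ is $2$-connected. The heart of the argument is to attach to each ear $P_i$ a ``position'' $p_i$ — heuristically, the place on $C$ through which the new cycle created by $P_i$ leaves and re-enters $C$ — in such a way that that cycle has length an explicit function of $p_i$, the ear length $\ell_i$, and the current long cycle. Because $G$ has no two cycles of the same length, these length values are pairwise distinct, and a short computation of the type $|C_1\triangle C_2|=|C_1|+|C_2|-2|C_1\cap C_2|$, applied to cycles sharing a common sub-path, translates this into pairwise distinctness of the differences $p_i-p_j$. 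To keep the positions inside a single interval of length $\le n$ — rather than $\le 2n$, which would only recover Shi's $\sqrt{2n}$ — the long cycle $C$ must be used essentially as a ruler: ears with both endpoints on $C$ behave like chords, while ears hanging off the interior of earlier ears must be rerouted or contracted onto $C$, at the cost of discarding only $o(\sqrt n)$ of the $c$ ears. Feeding the resulting difference-distinct set $S\subseteq\{0,1,\dots,(1+o(1))n\}$, $|S|=c-o(\sqrt n)$, into the Erd\H{o}s--Tur\'an Theorem then finishes the proof.

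I expect the genuine obstacle to be exactly the middle step: converting the combinatorially wild structure of an arbitrary open ear decomposition into a clean family of ``chords of one long cycle'' while sacrificing only a lower-order number of excess edges. Ears with many internal vertices, ears nested inside earlier ears, and the interaction among cycle lengths of different ``types'' (the backbone $C$ itself, one-chord cycles, two-chord cycles) all threaten to create coincidences or to push positions outside the length-$n$ window. Overcoming this presumably requires a careful preprocessing of $G$ — iteratively absorbing, contracting, or rerouting ears until the extremal-like picture becomes visible — combined with the observation that each such reduction is either harmless for the count or else exhibits two cycles of equal length, contradicting the hypothesis; making all of these error terms genuinely $o(\sqrt n)$ rather than $O(\sqrt n)$ is where the real care will be needed.
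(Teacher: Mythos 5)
Your plan correctly identifies the conceptual picture (the paper itself says the problem is ``conceptually reduced'' to Sidon sequences, and your reading of the lower-bound construction is right), but the proof has a genuine gap at exactly the step you flag as ``the heart of the argument,'' and that step is not a technicality one can wave through: it is where essentially all of the difficulty of the theorem lives. Concretely, you never establish that one can attach to all but $o(\sqrt n)$ of the ears a position $p_i$ on a \emph{single} long cycle $C$ so that (a) for each relevant pair $i,j$ there actually exists a cycle of $G$ whose length is an affine function of $p_i-p_j$ alone, and (b) these positions land in one window of length $(1+o(1))n$. In an arbitrary $2$-connected graph a longest cycle may miss most vertices, ears nest inside earlier ears, two ears need not lie on any common cycle whose length is controlled by a single difference, and the cycle through two ears typically involves both ear lengths and varying arcs of $C$ and of other ears. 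Distinctness of cycle lengths therefore does not translate into distinctness of differences of fixed integers unless one first proves a strong structural statement saying that almost all pairs of the relevant paths interact in one uniform way through a common backbone; and your proposed remedy (``reroute or contract'' bad ears, discarding only $o(\sqrt n)$ of them) is unsubstantiated --- contraction changes all cycle lengths through the contracted part and can both create length coincidences and destroy the hypothesis, and no argument is given that only a lower-order number of ears are problematic.

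For comparison, the paper does not extract a Sidon set and does not invoke the Erd\H{o}s--Tur\'an upper bound at all. It builds a specific ear decomposition equipped with a linear order on vertices, forms a family $\mathcal F$ of $s+1$ many $(u,v)$-paths, and then proves two heavy structural lemmas: almost all pairs of $\mathcal F$ are ``normal'' (their symmetric difference is a single cycle, Lemma~\ref{lem:normal}), and $\mathcal F$ can be reordered and cut into few intervals so that for most edges the paths containing a given edge appear almost consecutively (Lemma~\ref{lem: consecutive}). The finish is a Lindstr\"om-style double count of incidences $\delta(e,g_j,g_k)$: distinctness of the cycle lengths $|g_j\dif g_k|$ gives the lower bound $\Sigma\geq (\tfrac12-o(1))s^2\beta\gamma$, the consecutivity structure gives the upper bound $\Sigma\leq(\tfrac12+o(1))n\beta\gamma$, and comparing yields $s\leq(1+o(1))\sqrt n$. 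In other words, the paper proves an \emph{analogue} of the Sidon-set counting inside the graph rather than a literal reduction to it; your proposal presupposes the reduction whose construction is the actual content of the proof, so as written it is incomplete.
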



We introduce some notation.
Let $G$ be a graph.
For a subset $A$ of edges (or vertices) in $G$, let $G\bh A$ be the graph obtained from $G$ by deleting the elements in $A$.
Let $P$ be a path with endpoints $x,y$.
We say that $P$ is an $(x,y)$-path and any vertex or edge in $P\bh\{x,y\}$ is {\it inner}.
For a tree $T$ with $x,y\in V(T)$, we denote $xTy$ to be the unique subpath in $T$ between $x$ and $y$.
Suppose $F, F'$ are subgraphs of $G$.
By $F\dif F'$ we denote the subgraph consisting of the edges which appears in exactly one of $F$ and $F'$,
and by $F\bh F'$ we denote the subgraph consisting of the edges in $F$ but not in $F'$.
An \emph{$F$-ear} in $G$ is a path in $G$ whose two endpoints lie in $F$ but whose inner vertices do not.
An {\it ear-decomposition} of $G$ is a nested sequence $(G_0,G_1,...,G_s)$ of 
subgraphs of $G$ such that $G_0$ is a cycle, $G_{i+1}=G_i\cup P_{i+1}$ where $P_{i+1}$ is a $G_i$-ear in $G$ for $0\leq i<s$, and $G_s=G$.
It is well known that a graph $G$ is 2-connected if and only of it has an ear-decomposition.
Let us point out that any ear-decomposition of $G$ has $s=|E(G)|-|V(G)|$.
Throughout this paper, let $[n]=\{1,2,...,n\}$ and all logarithms in this paper are binary (with base $2$).

We organize this paper as follows.
In Section~2, we set up our proof environment by defining an ear-decomposition associated with a special linear ordering of vertices and
then using it to construct a family $\dF$ of paths which will serve as the building blocks to generate cycles later.
In Section~3, we prove some preliminary propositions on the paths in $\dF$ and classify all pairs of $\dF$ in three types.
In Section~4, we prove Lemma~\ref{lem:normal}, which gives a structural description on $\dF$ (very loosely speaking, it shows that almost all pairs of $\dF$ form a similar local structure).
In Section~5, we prove Lemma~\ref{lem: consecutive}, which roughly says that one can reorder the paths of $\dF$ in a nice way such that for almost every edge $e$, the paths containing $e$ are listed almost consecutively.
In Section~6, we complete the proof of Theorem~\ref{thm: main}.
In the final section, we conclude this paper by mentioning some remarks.

\section{Basic setting}
Throughout the rest of the paper, let $G$ be an $n$-vertex 2-connected graph with $n+s$ edges,
where $n$ is sufficiently large and $s\geq (1+o(1))\sqrt{n}$.
Our ultimate goal is to show that $G$ contains two cycles of the same length.
To this end, we assume in the rest of the paper that
$G$ contains at most one cycle of length $i$ for each $3\leq i\leq n$ and thus in particular,
\begin{align}\label{equ:(n-2)cycles}
\mbox{ $G$ contain at most $n-2$ cycles. }
\end{align}

To begin with, we define an ear-decomposition $(G_0,G_1,...,G_s)$ of $G$ and a linear order $\prec$ of $V(G)$ using the following iterated procedure.
(This will be crucial for all the coming proofs.)
\begin{itemize}
\item [(i)] Fix an edge $uv\in E(G)$ and let $G_0$ be any cycle in $G$ containing $uv$. Let $P_0=ux_1\cdots x_av$ be the path $G_0\bh\{uv\}$.
We define a linear order on $V(G_0)$ by letting $u\<x_1\<\cdots\<x_a\<v$.
\item [(ii)] Now suppose we have defined $G_{i-1}$ and a linear order $\prec$ on $V(G_{i-1})$ for some $1\leq i\leq s$.
Among all choices of $G_{i-1}$-ears in $G$, let $P_i$ be a $G_{i-1}$-ear with endpoints $\ell_i, r_i\in V(G_i)$ such that
$\ell_i$ is minimum under $\<$ of $V(G_{i-1})$ and subject to this, $r_i$ is minimum under $\<$ of $V(G_{i-1})$.\footnote{Note that by this choice, we have $\ell_i\< r_i$ for each $i\in [s]$.}
Let $G_i=G_{i-1}\cup P_i$.
Write $P_i=\ell_i y_1\cdots y_b r_i$ and let $\ell_i^+$ be the vertex of $G_{i-1}$ that succeeds $\ell_i$ immediately in the linear order $\<$.
We extend the linear order $\<$ on $V(G_{i-1})$ to $V(G_i)$
by inserting all vertices $y_j$ with $j\in [b]$ between $\ell_i$ and $\ell_i^+$ such that $\ell_i\<y_1\<\cdots\<y_b\<\ell_i^+$.
\end{itemize}

\noindent Using this ear-decomposition, we define
\begin{align}\label{equ:LR}
\mbox{$L=P_0\cup \left(\cup_{i\in [s]} P_i\bh \{r_i\}\right)$ ~~ and ~~ $R=P_0\cup \left(\cup_{i\in [s]} P_i\bh \{\ell_i\}\right)$.}
\end{align}
It is easy to see that $L$ and $R$ are two spanning trees in $G$, and we will view $u$ as the root of $L$ and $v$ as the root of $R$.
Now we define a family of $(u,v)$-paths as following:
$$\mbox{Let $f_0=P_0$ and for $i\in [s]$, let $L_i=uL\ell_i, ~R_i=r_iRv$ and $f_i=L_i\cup P_i\cup R_i$.}$$
Let $\mathcal{F}=\{f_i: 0\leq i\leq s\}$. These paths will be used to generate cycles in coming proofs.

\section{Preliminaries on $\mathcal{F}$}
In this section, we prove some basic propositions about the paths in $\dF$.
The first one can be derived directly from the above definitions.

\begin{prop} \label{prop:pre-suc}
Let $x,y\in V(G)$ and let $i, j$ be the minimum indices such that $x\in V(P_i)$ and $y\in V(P_j)$.
If $x\in uLy$ or $x\in yRv$, then $i\leq j$.
\end{prop}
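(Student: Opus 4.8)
The plan is to reduce the whole statement to a single monotonicity property of the ear-decomposition. For $z\in V(G)$ write $\beta(z)$ for the least index $k$ with $z\in V(P_k)$; this is well defined because $V(G)=V(P_0)\cup\bigcup_{k\in[s]}V(P_k)$, and in the notation of the proposition $i=\beta(x)$ and $j=\beta(y)$. I will show that $\beta$ is non-decreasing along the path $uLy$ when it is traversed starting from $u$, and likewise non-decreasing along $yRv$ when traversed starting from $v$. Granting this, if $x\in uLy$ then $x$ occurs no later than $y$ in that traversal, so $\beta(x)\le\beta(y)$, i.e.\ $i\le j$; and if instead $x\in yRv$ the same conclusion follows from the statement about $R$.

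To establish the monotonicity it is enough to check that $\beta(p_L(z))\le\beta(z)$ for every non-root vertex $z$ of $L$, where $p_L(z)$ denotes the parent of $z$ in $L$ rooted at $u$ (then iterate along the unique path from $z$ to the root). For this I would exploit how $L$ is assembled: putting $L^{(i)}=P_0\cup\bigcup_{k\le i}(P_k\bh\{r_k\})$, the fact that $P_i$ is a $G_{i-1}$-ear means the only vertex of $P_i\bh\{r_i\}$ already lying in $G_{i-1}$ is $\ell_i$, so $L^{(i)}$ is obtained from $L^{(i-1)}$ by attaching the path $\ell_i y_1\cdots y_b$ (writing $P_i=\ell_iy_1\cdots y_b r_i$) at the old vertex $\ell_i$. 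Hence each $L^{(i)}$ is a subtree of the spanning tree $L$ containing the root $u$, which forces the $L$-parent of a vertex to coincide with its parent in the first $L^{(i)}$ that contains it. Reading the parents off: any vertex $z\neq u$ of $P_0$ has $\beta(z)=0$, and so does its $L$-parent, the vertex preceding $z$ along $P_0$; an inner vertex $y_k$ of $P_i$ with $k\ge2$ has $L$-parent $y_{k-1}$, again of birth time $i$; and $y_1$ has $L$-parent $\ell_i\in V(G_{i-1})$, whose birth time is at most $i-1<i$. In every case $\beta(p_L(z))\le\beta(z)$. The argument for $R$ is the mirror image: with $R^{(i)}=P_0\cup\bigcup_{k\le i}(P_k\bh\{\ell_k\})$, the trimmed ear $P_i\bh\{\ell_i\}=y_1\cdots y_b r_i$ is attached at the old vertex $r_i$, so the $R$-parent of $y_k$ is $y_{k+1}$ for $k<b$ and the $R$-parent of $y_b$ is $r_i\in V(G_{i-1})$, which again yields $\beta(p_R(z))\le\beta(z)$.

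I do not expect a genuine obstacle here; this proposition is essentially a warm-up. The only point that wants a careful sentence is the assertion that appending a trimmed ear at a single old endpoint keeps $L^{(i)}$ (respectively $R^{(i)}$) a subtree of $L$ (respectively $R$) and does not disturb parent relations fixed at earlier stages, but this is immediate from the fact, recorded in the setup, that $L$ and $R$ are spanning trees of $G$, so the path from any vertex to the chosen root is unique and is unaffected by ears added later.
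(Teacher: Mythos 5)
Your argument is correct: the birth-time monotonicity along root-to-vertex paths in $L$ (rooted at $u$) and $R$ (rooted at $v$), established by checking parents of trimmed-ear vertices, is exactly the "direct derivation from the definitions" that the paper alludes to when it states this proposition without proof. No gaps; the only points needing care (that $L^{(i)}$, $R^{(i)}$ are subtrees containing the root, so parents are determined at the stage a vertex first appears) are handled correctly.
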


\begin{prop}\label{prop:LiRj}
For any $i,j\in [s]$, $L_i\cup R_j$ does not contain cycles.
\end{prop}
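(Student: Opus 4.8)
The plan is to prove the slightly stronger statement that $L_i\cup R_j$ is a forest. For $x\in V(G)$ let $\mu(x)$ be the least index with $x\in V(P_{\mu(x)})$; thus $\mu(x)=0$ exactly when $x\in V(P_0)$, and for $\mu(x)\geq 1$ the vertex $x$ is an inner vertex of the ear $P_{\mu(x)}$. First I would record the following monotonicity built into the construction: in the tree $L$ (rooted at $u$) every non-root vertex is $\succ$ its parent, and in the tree $R$ (rooted at $v$) every non-root vertex is $\prec$ its parent. Both are immediate inductions along the ear-decomposition from step (ii): the inserted path $\ell_i y_1\cdots y_b$ lies between $\ell_i$ and $\ell_i^+$ with $\ell_i\prec y_1\prec\cdots\prec y_b$, which in $L$ points away from the root, while for $R$ one also uses $r_i\succeq\ell_i^+$ (valid since $r_i\in V(G_{i-1})$ and $\ell_i\prec r_i$). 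Consequently $\prec$ strictly increases as one walks $L_i=uL\ell_i$ from $u$ to $\ell_i$, and strictly increases as one walks $R_j=r_jRv$ from $r_j$ to $v$.

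Next I would describe $L_i$ and $R_j$ in ``chunks''. Following $uL\ell_i$ back from $\ell_i$, it runs backwards along the inner path of $P_{\mu(\ell_i)}$ to its left endpoint, then backwards along the inner path of that endpoint's own ear, and so on through strictly decreasing ear-indices, finally running along $P_0$ to $u$. So $L_i$ is a concatenation of chunks, at most one per level, appearing in strictly increasing order of level on the way from $u$ to $\ell_i$, the chunk at a level $t\geq 1$ being $\ell_t$ together with a prefix $y_1,\dots,y_q$ of the inner vertices of $P_t$. Symmetrically $R_j$ is a concatenation of chunks appearing in strictly decreasing order of level on the way from $r_j$ to $v$, the chunk at level $t\geq 1$ being a suffix $y_{q'},\dots,y_b$ of the inner vertices of $P_t$ together with $r_t$. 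Since $P_i\bh\{r_i\}$ (resp.\ $P_i\bh\{\ell_i\}$) keeps every edge of $P_i$ except the one at $r_i$ (resp.\ at $\ell_i$), and $E(G)$ is the disjoint union of $E(P_0)$ and the $E(P_i)$, we get $E(L)\cap E(R)=E(P_0)\cup\bigcup_i\{\text{inner edges of }P_i\}$; thus every edge of the forest $K:=L_i\cap R_j$ joins two vertices of the same level. Hence distinct components of $K$ live in distinct levels, and inside a fixed level the common vertices of $L_i$ and $R_j$ are the overlap of a prefix and a suffix of one inner path (or of $P_0$), so they form a single sub-path. Therefore $K$ has exactly one component for each level in which $L_i$ and $R_j$ share a vertex, and it remains to show there is at most one such level.

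This last point is the crux, and I expect it to carry essentially all of the content once the above bookkeeping is in place. Suppose $L_i$ and $R_j$ shared a vertex in two levels $t>t'$; choose $z$ lying on both $L_i$ and $R_j$ with $\mu(z)=t$, and $z'$ lying on both with $\mu(z')=t'$. On $L_i$ the level-$t$ chunk comes after the level-$t'$ chunk (levels increase from $u$) and $\prec$ increases along $L_i$, so $z\succ z'$; on $R_j$ the level-$t$ chunk comes before the level-$t'$ chunk (levels decrease from $r_j$) and $\prec$ increases along $R_j$, so $z\prec z'$ — a contradiction. Hence $K$ has at most one component, i.e.\ $|V(K)|-|E(K)|\leq 1$. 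Since $L_i$ and $R_j$ are paths, $|E(L_i\cup R_j)|=(|V(L_i)|-1)+(|V(R_j)|-1)-|E(K)|=|V(L_i\cup R_j)|-2+(|V(K)|-|E(K)|)\leq |V(L_i\cup R_j)|-1$; as $L_i\cup R_j$ has at most two components, each of them must be a tree, so $L_i\cup R_j$ is a forest and in particular contains no cycle. The only places needing care are the monotonicity claim and the precise chunk description; after that, the orientation-reversal contradiction and the edge count are routine.
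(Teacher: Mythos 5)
Your proof is correct, but it takes a genuinely different route from the paper's. The paper's argument is two lines: a cycle in $L_i\cup R_j$ must have the form $aL_ib\cup aR_jb$ for two vertices $a\prec b$; applying Proposition~\ref{prop:pre-suc} once to $a\in uLb$ and once to $b\in aRv$ forces the minimal ear indices of $a$ and $b$ to coincide, so both are inner vertices of the same ear and the two arcs coincide, a contradiction. You instead prove the stronger statement that $L_i\cup R_j$ is a forest: you rederive from the construction the monotonicity of $\prec$ along $L$ away from $u$ and along $R$ toward $v$, decompose $L_i$ and $R_j$ into per-ear chunks with monotone levels, show that $L_i\cap R_j$ is confined to a single ear (hence is a single path), and finish with an edge count. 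Your two-level contradiction ($z\succ z'$ along $L_i$ versus $z\prec z'$ along $R_j$) is exactly Proposition~\ref{prop:pre-suc} applied twice, so in effect you have supplied a proof of that proposition (which the paper states without proof), at the price of more bookkeeping; what your route buys is self-containedness and the slightly stronger conclusion that $L_i\cap R_j$ is a subpath of one ear. Two small points to tidy: $E(G)$ is the disjoint union of $\{uv\}$, $E(P_0)$ and the sets $E(P_i)$ (the omitted edge $uv$ is harmless, as it lies in neither $L$ nor $R$); and the closing inference ``at most two components and $|E|\le |V|-1$, hence each component is a tree'' is not a valid general principle (a triangle plus an isolated vertex satisfies both hypotheses), so split into cases: if $V(L_i)\cap V(R_j)=\emptyset$ the union is two disjoint paths, and otherwise it is connected, whence $|E|\le |V|-1$ forces a tree.
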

\begin{proof}
    Suppose there is a cycle $C$ in $L_i\cup R_j$.
    Then there must exist two vertices $a,b$ such that $C=aL_ib\cup aR_jb$.
    Let $a\< b$ and let $k$ and $\ell$ be the minimum indices satisfying $a\in V(P_k)$ and $b\in V(P_\ell)$.
    Clearly we have $a\in uLb$ and then Proposition~\ref{prop:pre-suc} implies $k\leq \ell$.
    But we also have $b\in aRv$ and by Proposition~\ref{prop:pre-suc} again, we derive $\ell\leq k$.
    So $k=\ell$ and $a,b$ are two inner vertices of $P_k$.
    This shows that $aL_ib=aLb=aP_kb=aRb=aR_jb$, a contradiction.
\end{proof}

\begin{prop}\label{prop:P-notin-f}
For distinct $i, j\in \{0,1,...,s\}$, we have $P_i\not\subseteq f_j$.
\end{prop}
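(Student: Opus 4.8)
The plan is to argue by cases on how $i$ and $j$ compare, with the case $i<j$ (and $i,j\in[s]$) being the heart of the matter. Two preliminary observations will drive everything. \textbf{(1)} For each $k\in[s]$ the ear $P_k$ has at least one edge and no edge of $P_k$ lies in $G_{k-1}$: if $P_k$ has an inner vertex then every edge of $P_k$ is incident to one, which lies outside $G_{k-1}$, and otherwise $P_k=\ell_kr_k$ is a genuinely new edge. \textbf{(2)} Writing $e_k^\ell$ and $e_k^r$ for the two edges of $P_k$ incident to $\ell_k$ and to $r_k$, one has $e_k^r\notin E(L)$ and $e_k^\ell\notin E(R)$. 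Indeed $E(L)=E(P_0)\cup\bigcup_m E(P_m\bh\{r_m\})$, and $e_k^r$ misses $E(P_k\bh\{r_k\})$ by definition, misses $E(P_0)\cup\bigcup_{m<k}E(P_m\bh\{r_m\})\subseteq E(G_{k-1})$ by (1), and for $m>k$ lies in $E(P_k)\subseteq E(G_{m-1})$, which by (1) is disjoint from $E(P_m)$; the statement for $R$ is symmetric. I will also use that $L_j=uL\ell_j$ and $R_j=r_jRv$ both lie inside $G_{j-1}$ (so $f_j\subseteq G_j$): an easy induction shows $P_0\cup\bigcup_{k<j}(P_k\bh\{r_k\})$ is a spanning tree of $G_{j-1}$ sitting inside the tree $L$ (each $P_k\bh\{r_k\}$ attaches a path of new vertices at the old vertex $\ell_k$), so the unique path of $L$ from $u$ to $\ell_j$ already lies in $G_{j-1}$; symmetrically for $R_j$.

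With these in hand the cases are quick except the last. If $i=0$, then $f_i=P_0$ and $f_j$ are both $(u,v)$-paths, so $P_0\subseteq f_j$ would force $P_0=f_j$ and hence $P_j\subseteq f_j=P_0\subseteq G_{j-1}$, contradicting (1). If $i>j$ (this also covers $j=0$), then $f_j\subseteq G_j\subseteq G_{i-1}$, so $E(P_i)\cap E(f_j)=\emptyset$ by (1) and $P_i\not\subseteq f_j$ since $P_i$ has an edge. Finally suppose $i,j\in[s]$ with $i<j$ and, for contradiction, $P_i\subseteq f_j=L_j\cup P_j\cup R_j$. Being connected, $P_i$ is a subpath of the $(u,v)$-path $f_j$; since $V(P_i)\subseteq V(G_i)\subseteq V(G_{j-1})$ avoids the inner vertices of $P_j$, this subpath lies entirely in the initial segment $L_j$ or entirely in the final segment $R_j$ (the only remaining possibility, that $P_i$ uses the edge $\ell_jr_j$ when $P_j$ is a single edge, is excluded because $\ell_jr_j\notin E(G_{j-1})$ while $E(P_i)\subseteq E(G_{j-1})$). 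Thus $P_i\subseteq L$ or $P_i\subseteq R$, giving $e_i^r\in E(L)$ or $e_i^\ell\in E(R)$ and contradicting (2).

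The only real difficulty is this last case $i<j$, where $P_i\subseteq G_i\subseteq G_{j-1}$ and so $P_i$ could a priori be a piece of $f_j$; the crude ``ears are new edges'' argument is useless there. The resolution is to exploit the asymmetry deliberately built into $L$ and $R$: the edge of $P_i$ touching $r_i$ was left out of $L$ and the edge touching $\ell_i$ was left out of $R$, which is exactly what prevents $P_i$ from fitting inside either of the two tree-paths $L_j$ and $R_j$ that make up $f_j$.
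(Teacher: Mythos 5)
Your proof is correct and follows essentially the same route as the paper: reduce to $P_i\subseteq L_j$ or $P_i\subseteq R_j$ and then derive a contradiction from the fact that the edge of $P_i$ at $r_i$ is excluded from $L$ while the edge at $\ell_i$ is excluded from $R$ (the paper phrases this as ``$P_i$ is not a subpath of the tree $L$ (resp.\ $R$), but $L_j$ (resp.\ $R_j$) is''). You simply spell out the auxiliary facts (newness of ear edges, $f_j\subseteq G_j$, the boundary cases $i=0$ and $i>j$) that the paper dismisses with ``clearly we may assume $i,j\in[s]$'' and the edge-disjointness of ears.
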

\begin{proof}
Suppose that $P_i$ is a subpath of $f_j$ for some $i\neq j$. Clearly we may assume $i,j\in [s]$.
Since $P_i,P_j$ have no common edges, it follows that either $P_i\subseteq L_j$ or $P_i\subseteq R_j$.
Now we note that $P_i$ is not a subpath of the tree $L$ (or respectively $R$), but $L_j$ (or respectively $R_j$) is, a contradiction.
\end{proof}

The following proposition is also straightforward to see (we omit its proof here).

\begin{prop}\label{prop:orient-path}
For any $i\in \{0,1,...,s\}$, $f_i$ is a $(u,v)$-path,
whose vertices, as traversing from $u$ to $v$, increase in the linear order $\<$.\footnote{It will be convenient for us to picture that each $f_i$ has an imagined orientation from $u$ to $v$, as to capture the linear ordering $\<$ on its vertices.}
\end{prop}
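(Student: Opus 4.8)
The plan is to reduce everything to a single monotonicity property of the two spanning trees $L$ and $R$ built in Section~2, and then to read the order $\prec$ off along the three pieces $L_i$, $P_i$, $R_i$ of $f_i$.

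First I would record two elementary facts. Each step (ii) inserts the new vertices $y_1,\dots,y_b$ strictly between an already ordered pair $\ell_i\prec\ell_i^+$, so an easy induction shows that $u$ stays the $\prec$-least and $v$ the $\prec$-greatest vertex of $G$; moreover $\ell_i^+\preceq r_i$ for every $i\in[s]$, because $r_i\in V(G_{i-1})$ together with $\ell_i\prec r_i$ forces $r_i$ to lie at or after the immediate successor of $\ell_i$. The key claim is then: in $L$ rooted at $u$, every non-root vertex is $\prec$-larger than its parent, and in $R$ rooted at $v$, every non-root vertex is $\prec$-smaller than its parent. For $L$ this holds because $E(L)$ is the disjoint union of $E(P_0)$ and the sets $E(P_i\bh\{r_i\})$: the subpath $P_0=ux_1\cdots x_a v$ is hung from $L$ at its $\prec$-least vertex $u$ with $\prec$ increasing away from $u$, and each $P_i\bh\{r_i\}=\ell_i y_1\cdots y_b$ is a path hung from the already-built spanning tree of $G_{i-1}$ at its $\prec$-least vertex $\ell_i$ with $\prec$ increasing away from $\ell_i$ (by $\ell_i\prec y_1\prec\cdots\prec y_b$). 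For $R$ the argument is the mirror image: $P_0$ is hung from $R$ at its $\prec$-greatest vertex $v$ with $\prec$ decreasing away from $v$, and each $P_i\bh\{\ell_i\}=y_1\cdots y_b r_i$ is hung from the spanning tree of $G_{i-1}$ at its $\prec$-greatest vertex $r_i$ (greatest because $y_b\prec\ell_i^+\preceq r_i$) with $\prec$ decreasing away from $r_i$.

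Walking along a tree path one edge at a time and applying the key claim gives, for every $w\in V(G)$: the vertices of $uLw$ strictly increase from $u$ to $w$ (so all of them are $\preceq w$), and the vertices of $wRv$ strictly increase from $w$ to $v$ (so all of them are $\succeq w$). Now fix $i$. For $i=0$ there is nothing to prove, since $f_0=P_0$ and $u\prec x_1\prec\cdots\prec x_a\prec v$. For $i\in[s]$, step (ii) gives $\ell_i\prec y_1\prec\cdots\prec y_b\prec\ell_i^+\preceq r_i$, so $\prec$ strictly increases along $P_i$ from $\ell_i$ to $r_i$ and every inner vertex of $P_i$ lies strictly between $\ell_i$ and $r_i$. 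Combining this with $V(L_i)\subseteq\{w:w\preceq\ell_i\}$ and $V(R_i)\subseteq\{w:w\succeq r_i\}$ and with $\ell_i\prec r_i$ forces $V(L_i)\cap V(P_i)=\{\ell_i\}$, $V(P_i)\cap V(R_i)=\{r_i\}$ and $V(L_i)\cap V(R_i)=\emptyset$. Hence $f_i=L_i\cup P_i\cup R_i$ really is a $(u,v)$-path, and, being the concatenation of three $\prec$-increasing paths joined continuously at $\ell_i$ and $r_i$, its vertices strictly increase from $u$ to $v$.

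The one point that needs genuine care is the bookkeeping in the second paragraph: one must check that through the ear-decomposition the part of $L$ (resp.\ $R$) lying in $G_{i-1}$ is exactly the spanning tree $P_0\cup\bigcup_{j<i}(P_j\bh\{r_j\})$ (resp.\ $P_0\cup\bigcup_{j<i}(P_j\bh\{\ell_j\})$) of $G_{i-1}$, so that ``hung from this tree at $\ell_i$'' (resp.\ ``at $r_i$'') is meaningful, and one must take care to get \emph{strict} inequalities at the junctions $\ell_i,r_i$ out of $\ell_i^+\preceq r_i$. Degenerate cases---an ear with no inner vertex, or $\ell_i=u$, or $r_i=v$---cost only a line each. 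I do not expect a real obstacle here: this proposition is exactly the kind of structural fact that the construction of $\prec$, $L$ and $R$ in Section~2 was designed to make transparent.
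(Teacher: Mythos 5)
Your proof is correct. The paper in fact omits the argument for this proposition (it is declared ``straightforward to see''), and your route --- establishing that in $L$ every non-root vertex is $\prec$-larger than its parent and in $R$ every non-root vertex is $\prec$-smaller than its parent (via the decomposition of $E(L)$ and $E(R)$ into $P_0$ and the pieces $P_i\bh\{r_i\}$, $P_i\bh\{\ell_i\}$), and then concatenating $L_i$, $P_i$, $R_i$ using $\ell_i\prec y_1\prec\cdots\prec y_b\prec \ell_i^+\preceq r_i$ to get internal disjointness and strict monotonicity --- is precisely the natural argument the construction in Section~2 is designed to support, with the degenerate cases ($b=0$, $\ell_i=u$, $r_i=v$) handled as you indicate.
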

\noindent

Let $i\neq j\in \{0,1,...,s\}$. A vertex in $f_i\cup f_j$ is called {\bf splitting} if it has at least three neighbors in $f_i\cup f_j$.
By Proposition~\ref{prop:orient-path}, there exists some integer $t\geq 1$ such that $f_i\dif f_j$ consists of $t$ cycles $a_\ell P_i b_\ell \cup a_\ell P_j b_\ell$ for $\ell\in [t]$, where $a_1\<b_1\pq a_2\<\cdots \<b_{t-1}\pq a_t\<b_t$ are all splitting vertices of $f_i\cup f_j$.

\begin{prop} \label{prop:diff1}
For distinct $i, j\in \{0,1,...,s\}$, $f_i\dif f_j$ consists of one or two cycles such that
each cycle shares edges with $P_i\cup P_j$ and each of $P_i$ and $P_j$ shares edges with at most one cycle in $f_i\dif f_j$.
\end{prop}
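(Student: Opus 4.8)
The statement bundles three assertions: (a) each cycle of $f_i\dif f_j$ shares an edge with $P_i\cup P_j$; (b) each of $P_i$ and $P_j$ shares edges with at most one cycle of $f_i\dif f_j$; and (c) $f_i\dif f_j$ consists of at most two cycles. The plan is to prove (a) and (b) directly and then read off (c): every cycle meets $P_i$ or $P_j$ by (a), and each of $P_i,P_j$ is met by at most one cycle by (b), so there can be at most two cycles. Throughout I will use the description recorded just before the statement — that $f_i\dif f_j$ is an edge-disjoint union of cycles $C_1,\dots,C_t$, where $C_\ell\cap f_i$ is the sub-arc of $f_i$ between two consecutive splitting vertices $a_\ell,b_\ell$, and similarly on the $f_j$-side — together with Proposition~\ref{prop:orient-path}, which lets me treat each $f_i$ as the oriented concatenation $L_i\cup P_i\cup R_i$ (with $f_0=P_0$, which moreover lies in both spanning trees $L$ and $R$).

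For (a), fix a cycle $C$ of $f_i\dif f_j$ and write $C=(C\cap f_i)\cup(C\cap f_j)$, a sub-arc of $f_i$ glued to a sub-arc of $f_j$. Since $L_i,P_i,R_i$ appear in this order along $f_i$, if $C\cap f_i$ used no edge of $P_i$ it would lie entirely in $L_i$ or entirely in $R_i$; likewise for $C\cap f_j$ and $P_j$. So if $C$ met neither $P_i$ nor $P_j$, the four combinations would force $C\subseteq L$, $C\subseteq R$, $C\subseteq L_i\cup R_j$, or $C\subseteq L_j\cup R_i$, contradicting that $L$ and $R$ are trees, respectively Proposition~\ref{prop:LiRj} (applied also with $i,j$ interchanged). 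If $i$ or $j$ is $0$ the claim is immediate, the corresponding half of $C$ being a nonempty sub-arc of $P_0$. Hence $C$ shares an edge with $P_i\cup P_j$.

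For (b) — the heart of the matter — I would first pin down which edges of the ear $P_i$ can reappear in $f_j$. From $L=P_0\cup\bigcup_k(P_k\bh\{r_k\})$ and $R=P_0\cup\bigcup_k(P_k\bh\{\ell_k\})$, the edges of $P_i$ lying in $L$ form the sub-path of $P_i$ from $\ell_i$ to its last inner vertex, and those lying in $R$ form the sub-path from its first inner vertex to $r_i$; moreover, inside the tree $L$ the vertices of $P_i$ sit on a branch attached to the rest of $L$ only at $\ell_i$, so the tree-path $L_j=uL\ell_j$ meets it in an initial segment of $P_i$, and symmetrically $R_j=r_jRv$ meets $P_i$ in a terminal segment, while $P_i$ and $P_j$ share no edge. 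Therefore the edges of $P_i$ that lie in $f_j$ form an initial segment together with a terminal segment of $P_i$, so the edges of $P_i$ lying in $f_i\dif f_j$ form a single connected middle sub-path $D$ of $P_i$. The same bookkeeping, using $f_0=P_0\subseteq L\cap R$ and that $P_0$ is the $u$--$v$ path in each of $L$ and $R$, covers the case $P_i=P_0$.

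Finally I would verify that $D$ contains no splitting vertex of $f_i\cup f_j$ in its interior: if $w$ were such a vertex, both edges of $P_i$ at $w$ would lie in $f_i\dif f_j$ and hence not in $f_j$, yet $w$ being splitting forces $w\in V(f_j)$; but an inner vertex of $P_i$ in $V(f_j)$ must lie on $L_j=uL\ell_j$ or on $R_j=r_jRv$ (or be $\ell_j$ or $r_j$), and by the ``attached only at $\ell_i$, resp.\ $r_i$'' picture this would put one of the two $P_i$-edges at $w$ into $L_j$ or $R_j\subseteq f_j$ — a contradiction. Since $D$ is a sub-arc of $f_i$ with no interior splitting vertex all of whose edges lie in $f_i\dif f_j$, it is contained in a single arc $a_\ell f_i b_\ell$, i.e.\ in one cycle; thus $P_i$, and by symmetry $P_j$, meets at most one cycle, which gives (b) and then (c). I expect the main obstacle to be precisely this ``twig'' bookkeeping inside $L$ and $R$: once one establishes cleanly that the edges of $P_i$ in $L_j$ form an initial segment and those in $R_j$ a terminal segment, both the connectedness of $D$ and the absence of interior splitting vertices drop out, and the rest is assembly.
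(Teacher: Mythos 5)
Your proof is correct, and it reaches the conclusion by the same two pillars the paper uses (the trees $L,R$ and Proposition~\ref{prop:LiRj}), but the internal organization is genuinely different, especially for the ``at most one cycle per ear'' half. For part (a), the paper fixes $i<j$, shows $V(L_i)\cap V(R_j)=\emptyset$ from $\ell_i\pq\ell_j$, rules out edges of $C$ in $L_i$ and in $R_j$ by counting splitting vertices, and lands on $C\subseteq R_i\cup L_j$; your observation that a contiguous arc $C\cap f_i$ avoiding $E(P_i)$ must sit wholly in $L_i$ or wholly in $R_i$ replaces all of that with a symmetric four-case split ($L\cup L$, $R\cup R$, and the two mixed cases via Proposition~\ref{prop:LiRj}), which is arguably cleaner and does not need the ordering of $\ell_i,\ell_j$. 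For part (b), the paper argues by contradiction in two lines: if $P_i$ met two cycles, a common vertex $x$ of $f_i,f_j$ between them is an inner vertex of $P_i$, hence lies on $L_j$ or $R_j$, and then one of the two cycles is contained in the tree $L$ (or $R$), which is absurd. You instead establish the finer structural fact that $E(P_i)\cap E(f_j)$ is an initial segment (from $L_j$) plus a terminal segment (from $R_j$) of $P_i$, so the edges of $P_i$ in $f_i\dif f_j$ form one contiguous middle subpath $D$, and you exclude interior splitting vertices of $D$ by the parent-edge argument in $L$ resp.\ $R$; this buys an explicit description of how an ear can reappear in another path, at the cost of more bookkeeping (and you treat the degenerate index $j=0$ only implicitly, though there the claim is immediate since no edge or inner vertex of $P_i$, $i\geq 1$, lies on $f_0=P_0$). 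Both arguments ultimately rest on the same fact that inner vertices of $P_i$ attach to $L$ only through $\ell_i$ and to $R$ only through $r_i$, and your final step deducing ``at most two cycles'' from (a) and (b) is exactly the paper's.
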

\begin{proof}
First we show that each cycle $C$ in $f_i\dif f_j$ contains some edges in $P_i\cup P_j$.
Suppose not. Then we have $E(C)\subseteq E(L_i\cup R_i\cup L_j\cup R_j)$.
Let us assume $i<j$ here. By the choice of ears, we see $\ell_i\pq \ell_j$.
If there exists $x\in V(L_i)\cap V(R_j)$, then $\ell_j\<r_j\pq x\pq \ell_i$, a contradiction.
So $V(L_i)\cap V(R_j)=\emptyset$.
If $C$ has an edge in $L_i$, then  $L_i$ has at least two splitting vertices of $f_i\cup f_j$.
But $L_i\cup L_j$ has at most one splitting vertex, while $L_i$ and $R_i\cup R_j$ have no common vertex, a contradiction.
So $C$ has no edge in $L_i$ and similarly we can show $C$ has no edge in $R_j$.
Thus $C$ is contained in $R_i\cup L_j$, a contradiction to Proposition~\ref{prop:LiRj}.

Next we show each of $P_i$ and $P_j$ shares edges with at most one cycle in $f_i\dif f_j$.
Suppose for a contradiction that $P_i$ shares edges with two cycles in $f_i\dif f_j$.
Then there exists a common vertex $x$ in $f_i$ and $f_j$ between these two cycles.
We see that $x$ is an inner vertex of $P_i$ and thus cannot be an inner vertex of $P_j$.
So $x\in V(L_j)\cup V(R_j)$.
If $x\in V(L_j)$, then both of $uf_ix$ and $uL_jx$ are in $L$ and thus cannot contain cycles, a contradiction.
If $x\in V(R_j)$, then $xf_iv\cup xR_jv\subseteq R$, a contradiction.
Combining with these two facts, it is easy to see that $f_i\dif f_j$ consists of at most two cycles.
\end{proof}

\begin{prop}\label{prop:2cycleI}
Suppose $f_i\dif f_j$ consists of two cycles, where $i<j$.
Let $a\< b\pq c\< d$ be all splitting vertices of $f_i\cup f_j$.
Then $P_i\subseteq af_ib$, $P_j\subseteq cf_jd$ and there exists some $k<i$ such that
$f_k=uf_jc\cup cf_iv\in\dF$ and $b,c$ are inner vertices of $P_k$.
\end{prop}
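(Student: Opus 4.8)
I need to prove Proposition~\ref{prop:2cycleI}: when $f_i \triangle f_j$ has two cycles with $i<j$, the splitting vertices $a \prec b \preceq c \prec d$ satisfy $P_i \subseteq af_ib$, $P_j \subseteq cf_jd$, and there's some $k<i$ with $f_k = uf_jc \cup cf_iv$ and $b,c$ inner vertices of $P_k$.

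Let me think about the structure. By Proposition~\ref{prop:diff1}, $f_i \triangle f_j$ consists of two cycles $C_1 = af_ib \cup af_jb$ and $C_2 = cf_ib... $ wait, let me be careful. With splitting vertices $a \prec b \preceq c \prec d$, the two cycles are $C_1 = af_ib \cup af_jb$ and $C_2 = cf_id \cup cf_jd$. And each of $P_i$, $P_j$ shares edges with at most one of these.

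**Locating $P_i$ and $P_j$.**

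First I want to show $P_i \subseteq af_ib$. Since the cycle $C_1 = af_ib \cup af_jb$ and $C_2 = cf_id \cup cf_jd$ partition things, and each $P$ meets at most one cycle: $P_i$ meets exactly one (it meets at least one by Prop~\ref{prop:diff1}). Suppose $P_i \subseteq cf_id$ instead. I'd try to derive a contradiction using the ordering and the minimality in the ear-decomposition. The key fact is $\ell_i \preceq \ell_j$ since $i<j$. The segments $af_ib$ and $cf_id$ lie on $f_i = L_i \cup P_i \cup R_i$. If $P_i$ is in the later segment $cf_id$, then the earlier segment $af_ib$ lies entirely in $L_i$ (since $L_i$ comes before $P_i$ on $f_i$), so $af_ib = af_jb$ with both sides subpaths... actually $af_ib \subseteq L_i \subseteq L$ and $af_jb \subseteq f_j$. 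Hmm, $af_jb$ need not be in $L$. I think the right approach: the vertex $b$ is an inner vertex of $P_i$ (this is what we want), and if instead $a$ and $b$ both lie in $L_i$, then $af_ib = af_jb$ forces these to be the same path in two trees or a short-circuit — I should mimic the arguments in Propositions~\ref{prop:LiRj} and \ref{prop:diff1}. Similarly $P_j$ must be in the later cycle: if $P_j \subseteq af_jb$, then $cf_jd \subseteq R_j$, and combined with $C_2$ being a cycle one gets $cf_id \subseteq R_j$-type contradiction via Proposition~\ref{prop:LiRj}. The cleanest framing: since $a \prec b \preceq c \prec d$ and $f_i, f_j$ are increasingly ordered (Prop~\ref{prop:orient-path}), and $\ell_i \preceq \ell_j$ while $r_i, r_j$ compare appropriately, the ``$P_i$ is early, $P_j$ is late'' conclusion follows from parity of position. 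I expect this to be a short argument once I pin down the right comparison of $\ell_i, \ell_j, r_i, r_j$ against $a,b,c,d$.

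**Identifying $f_k$.**

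Now consider the $(u,v)$-path $Q := uf_jc \cup cf_iv$ (this makes sense: $c$ lies on both $f_i$ and $f_j$; $uf_jc$ is the part of $f_j$ before $c$, and $cf_iv$ is the part of $f_i$ after $c$). Since $P_j \subseteq cf_jd$, the segment $uf_jc$ avoids $P_j$, so $uf_jc \subseteq L_j \subseteq L$. Since $P_i \subseteq af_ib \subseteq af_ic$, the segment $cf_iv$ avoids $P_i$, so $cf_iv \subseteq R_i \subseteq R$. Thus $Q$ uses no edge of $P_i$ or $P_j$; its edges all come from $L_i \cup R_i \cup L_j \cup R_j$. The path $Q$ must contain some ear $P_k$ as a subpath — here I want to argue that $Q$ passes through the segment of $f_i \cup f_j$ between $b$ and $c$ (i.e.\ between the two cycles), and on that segment $f_i$ and $f_j$ agree, so it's a common subpath; and I need that this common segment, extended, actually equals some $f_k$. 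The natural candidate: let $k$ be minimal with an edge of $Q$... Actually I'd argue: $Q = uf_jc \cup cf_iv$; the portion $bf_ic = bf_jc$ (common segment between the cycles) lies in $Q$. Take any edge $e$ on $bf_ic$ and let $P_k$ be the ear containing $e$ with $k$ minimal over all edges of $bf_ic$. Then I claim $f_k = Q$: one shows $uf_jc = uf_kc$ and $cf_iv = cf_kv$ by running Proposition~\ref{prop:pre-suc}-type arguments (the ``increasing order, minimal ear'' comparisons) to show $L_k$ follows $uf_jc$ and $R_k$ follows $cf_iv$. And $k<i$: since $b, c$ are inner vertices of $P_k$ while $a, b, c, d$ are splitting vertices of $f_i \cup f_j$ with $b \preceq c$, and the ear $P_k$ bridges from before $b$ to after $c$; comparing $\ell_k$ with $\ell_i$ using $\ell_k \prec b$ and the position of $\ell_i$ relative to $a, b$ should give $\ell_k \prec \ell_i$ or $\ell_k = \ell_i$ with the index forced down — here I'll use that $i<j$ and the minimality in step (ii) of the construction. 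Finally $b, c$ are inner vertices of $P_k$ because $b$ is the left splitting vertex where $f_i, f_j$ rejoin after $C_1$ and $c$ is where they split for $C_2$, so on $(b f_i c) = (b f_j c)$ the ear $P_k$ must actually extend past both (if $b$ were an endpoint $\ell_k$ of $P_k$, then $b \in V(G_{k-1})$, but then both $P_i$ and $P_j$ would be forced to route through earlier, contradicting that $b$ is a splitting vertex — I'll spell out the short argument using Prop~\ref{prop:P-notin-f} / Prop~\ref{prop:pre-suc}).

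**Main obstacle.**

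The hard part will be the identification step: showing that the concrete path $Q = uf_jc \cup cf_iv$ is \emph{literally equal} to $f_k$ for the right index $k$, rather than merely containing $P_k$. This requires tracking that $uf_kc = uf_jc$ and $cf_kv = cf_iv$, i.e.\ that the tree-paths $L_k$ and $R_k$ coincide with the given segments; the engine for that is repeated application of Proposition~\ref{prop:pre-suc} (``minimal ear index is monotone along $L$ and along $R$'') together with the uniqueness of tree paths in $L$ and $R$. Pinning down that $k < i$ (as opposed to merely $k \ne i, j$) is the other delicate point, and there I'll lean on the tie-breaking rule in construction step (ii) and the hypothesis $i < j$.
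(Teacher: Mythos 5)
You never actually prove the two containments $P_i\subseteq af_ib$ and $P_j\subseteq cf_jd$, and the way you set this step up (``Suppose $P_i\subseteq cf_id$ instead'') is a false dichotomy: Proposition~\ref{prop:diff1} only says that $P_i$ shares edges with at most one of the two cycles, and a priori $P_i$ could also run through the segments where $f_i$ and $f_j$ coincide (every edge of $P_i$ except its two end-edges lies in both trees $L$ and $R$, hence may lie on $L_j$ or $R_j$), so ``meets the first cycle'' does not give ``is contained in the first cycle.'' The paper resolves this in the opposite order: it first proves only $E(P_i)\cap E(af_ib)\neq\emptyset$ and $E(P_j)\cap E(cf_jd)\neq\emptyset$ (using $\ell_i\preceq\ell_j$ for $i<j$), then constructs $f_k$, and only then deduces the full containments from $k<\min\{i,j\}$: no edge of $P_i$ or $P_j$ lies in $G_k\supseteq f_k$, so $P_i\subseteq f_i\bigtriangleup f_k=af_ib\cup af_jb$, which forces $P_i\subseteq af_ib$, and similarly $P_j\subseteq cf_jd$. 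Your plan needs the containments before $f_k$ exists, and no argument is supplied for them.

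The construction of $f_k$ also has concrete problems that you flag but do not resolve. Choosing $k$ as the minimal ear index over the edges of $bf_ic$ is vacuous when $b=c$, which the statement allows ($b\preceq c$); the paper instead takes $k$ minimal with $c\in V(P_k)$, which makes $c$ an inner vertex of $P_k$ for free, after which $uf_kc=uLc=uf_jc$ (since $c\in V(L_j\cup P_j\setminus r_j)$) and $cf_kv=cRv=cf_iv$ (since $r_i\preceq c$, i.e.\ $c\in V(R_i)$). With that choice $k<i$ is immediate from Proposition~\ref{prop:pre-suc} applied to $c\in r_iRv$ together with $r_i\in V(G_{i-1})$ --- not from comparing $\ell_k$ with $\ell_i$, which is the route you sketch and which is not obviously available, since both $\ell_k$ and $\ell_i$ precede $b$. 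Finally, your justification that $b$ is an inner vertex of $P_k$ (``if $b$ were an endpoint of $P_k$ then $b\in V(G_{k-1})$\dots'') is not a contradiction as stated; the paper gets this because the single cycle $f_i\bigtriangleup f_k=af_ib\cup af_jb$ must contain an edge of $P_k$ on its $af_jb$ side, which combined with $c\prec r_k$ pins $b$ and $c$ strictly inside $P_k$. So while your overall target ($f_k=uf_jc\cup cf_iv$, tree-path identification via Proposition~\ref{prop:pre-suc}, use of the ear ordering) matches the paper, the delicate steps --- precisely the ones you label ``the hard part'' and ``the other delicate point'' --- are missing, and where you do commit to a strategy it would not go through as written.
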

\begin{proof}
Let $k$ be the minimum index such that $c\in V(P_k)$. So $c$ is an inner vertex of $P_k$.
By Proposition~\ref{prop:diff1}, we have $E(P_i)\cap E(af_ib)\neq \emptyset$ and $E(P_j)\cap E(cf_jd)\neq\emptyset$
(this is because, otherwise, $E(P_i)\cap E(af_ib)=\emptyset$ and $E(P_j)\cap E(af_jb)\neq\emptyset$,
which would imply that $\ell_j\<b\pq\ell_i$, contradicting that $i<j$).
So $c\in V(L_j\cup P_j\bh r_j)$.
This, together with the fact $c$ is an inner vertex of $P_k$, show that $uf_kc=uLc=uf_jc$.
We also have $E(P_i)\cap E(cf_id)=\emptyset$ and thus $c\in V(R_i)$, implying $k<i$ and $cf_kv=cRv=cf_iv$.
Hence $f_k=uf_jc\cup cf_iv\in \dF$.

Now we see $f_i\dif f_k=af_ib\cup af_jb$ is a cycle and since $k<i$, this cycle must contain the entire $P_i$ and some edge in $P_k$.
Similarly since $k<j$, $f_j\dif f_k=cf_id\cup cf_jd$ is a cycle which contains the entire $P_j$ and some edge in $P_k$.
Therefore, we see $P_i\subseteq af_ib$, $P_j\subseteq cf_jd$, and $P_k$ contains some edge in $af_jb$ and some edge in $cf_id$,
which shows that $b,c$ are inner vertices of $P_k$.
\end{proof}

Let $i, j$ be distinct. By Proposition~\ref{prop:diff1}, we see that $f_i\backslash f_j$ consists of one or two subpaths,
exactly one of which contains edges in $P_i$.
By the {\bf primary segment} $\ps(i,j)$ of $f_i\backslash f_j$, we denote the unique subpath in $f_i\backslash f_j$ containing edges in $P_i$.
Note that $\ps(i,j)$ and $\ps(j,i)$ are distinct.

\begin{figure}[ht!]\label{fig:A}
	\begin{minipage}{0.5\linewidth}
		\centering
\begin{tikzpicture}[scale=0.8,line cap=round,line join=round,>=triangle 45,x=1cm,y=1cm]
\clip(-1,-1) rectangle (9,2);
\draw [line width=1pt] (0,0)-- (7,0);
\draw [shift={(2,0)},line width=1pt,color=red]  plot[domain=0:3.141592653589793,variable=\t]({1*1*cos(\t r)+0*1*sin(\t r)},{0*1*cos(\t r)+1*1*sin(\t r)});
\draw [shift={(5,0)},line width=1pt,color=blue]  plot[domain=0:3.141592653589793,variable=\t]({1*1*cos(\t r)+0*1*sin(\t r)},{0*1*cos(\t r)+1*1*sin(\t r)});

\begin{scriptsize}
\draw [fill=black] (0,0) circle (1.5pt);
\draw[color=black] (0 ,-0.25) node {$u$};
\draw [fill=black] (7,0) circle (1.5 pt);
\draw[color=black] (7 ,-0.25) node {$v$};

\draw [fill=black] (1,0) circle (1.5pt);
\draw[color=black] (1 ,-0.25) node {$a$};
\draw [fill=black] (3,0) circle (1.5pt);
\draw[color=black] (3 ,-0.25) node {$b$};
\draw[color=black] (2,1.25) node {$f_i$};
\draw [fill=black] (4,0) circle (1.5pt);
\draw[color=black] (4 ,-0.25) node {$c$};
\draw [fill=black] (6,0) circle (1.5pt);
\draw[color=black] (6 ,-0.25) node {$d$};
\draw[color=black] (5 ,1.25) node {$f_j$};
\draw[color=black] (7.5, 0) node {$f_k$};

\draw[color=black] (-0.1, 1.3) node {\large Type-\1};
\end{scriptsize}
\end{tikzpicture}
\end{minipage}	
\begin{minipage}{0.5\linewidth}
		\centering
\begin{tikzpicture}[scale=0.8,line cap=round,line join=round,>=triangle 45,x=1cm,y=1cm]
\clip(-1,-1) rectangle (9,2);
\draw [line width=1pt] (0,0)-- (7,0);
\draw [shift={(3,0)},line width=1pt,color=red]  plot[domain=0:3.141592653589793,variable=\t]({1*1*cos(\t r)+0*1*sin(\t r)},{0*1*cos(\t r)+1*1*sin(\t r)});
\draw [shift={(4,0)},line width=1pt,color=blue]  plot[domain=0:3.141592653589793,variable=\t]({1*1*cos(\t r)+0*1*sin(\t r)},{0*1*cos(\t r)+1*1*sin(\t r)});

\begin{scriptsize}
\draw [fill=black] (0,0) circle (1.5pt);
\draw[color=black] (0 ,-0.25) node {$u$};
\draw [fill=black] (7,0) circle (1.5 pt);
\draw[color=black] (7 ,-0.25) node {$v$};
\draw[color=black] (3.5, -0.25) node {$f_k$};
\draw [fill=black] (2,0) circle (1.5pt);
\draw[color=black] (2 ,-0.25) node {$a$};
\draw [fill=black] (3,0) circle (1.5pt);
\draw[color=black] (3 ,-0.25) node {$b$};
\draw[color=black] (3,1.25) node {$f_i$};
\draw [fill=black] (4,0) circle (1.5pt);
\draw[color=black] (4 ,-0.25) node {$c$};
\draw [fill=black] (5,0) circle (1.5pt);
\draw[color=black] (5 ,-0.25) node {$d$};
\draw[color=black] (4 ,1.25) node {$f_j$};
\draw[color=black] (7.5, 0) node {$f_k$};

\draw[color=black] (0.3, 1.3) node {\large Type-\2};
\end{scriptsize}
\end{tikzpicture}
\end{minipage}
\caption*{Figure 1}
\end{figure}

We now classify all pairs of $\dF$ in the following three types:
\begin{itemize}
\item A pair $\{f_i,f_j\}$ in $\dF$ is called {\bf type-\1}, if $f_i\dif f_j$ consists of two cycles.
In this case, we call the path $f_k\in \dF$ guaranteed in Proposition~\ref{prop:2cycleI} as the {\bf base} of $\{f_i,f_j\}$. See Figure 1 for an illustration.
\item A pair $\{f_i,f_j\}$ in $\dF$ is called {\bf type-\2}, if it is not type-\1 and there exists some $f_\ell\in \dF$
such that $\ps(i,\ell)=af_ic$ and $\ps(j,\ell)=bf_jd$ where $a\< b\< c\< d$ lie in $f_\ell$.
Such a path $f_\ell$ is called a {\bf crossing path} of $\{f_i,f_j\}$, and the crossing path $f_\ell$ with minimum $\ell$ is called the {\bf base} of $\{f_i,f_j\}$. See Figure 1.
\item Finally, a pair $\{f_i,f_j\}$ in $\dF$ is {\bf normal}, if it is neither type-\1 nor type-\2.
\end{itemize}

\begin{prop}\label{prop:degenerate-2}
Let $i,j,\ell$ be distinct. If $P_\ell\subseteq f_i\cup f_j$, then $\{f_i,f_j\}$ is type-\1 with base $f_\ell$.
\end{prop}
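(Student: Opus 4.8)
The plan is to show first that $\{f_i,f_j\}$ is forced to be type-\1, and then to pin its base down to $f_\ell$ by proving that the \emph{inner} splitting vertex of $f_i\cup f_j$ is an inner vertex of $P_\ell$. For the first part I would argue as follows. Since $\ell\notin\{i,j\}$, Proposition~\ref{prop:P-notin-f} gives $P_\ell\not\subseteq f_i$ and $P_\ell\not\subseteq f_j$; together with $P_\ell\subseteq f_i\cup f_j$ this means $P_\ell$ has an edge in $f_i\bh f_j$ and an edge in $f_j\bh f_i$, both lying in $f_i\dif f_j$. I also record that $P_\ell$ is $\<$-monotone: writing $P_\ell=\ell_\ell y_1\cdots y_b r_\ell$, the $y_j$'s were inserted into $\<$ between $\ell_\ell$ and $\ell_\ell^+\pq r_\ell$, so $\ell_\ell\<y_1\<\cdots\<y_b\<r_\ell$. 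Now, by Proposition~\ref{prop:orient-path}, each cycle $C$ of $f_i\dif f_j$ is the union of two $\<$-monotone paths between its $\<$-extreme vertices, one inside $f_i$ and one inside $f_j$; since $P_\ell$ is $\<$-monotone, its edges lying in a fixed such $C$ form a $\<$-monotone subpath of $P_\ell$ and hence stay within one of the two sides of $C$. So if $f_i\dif f_j$ were a single cycle, all non-shared edges of $P_\ell$ would lie on one side of it, hence all in $f_i$ or all in $f_j$, contradicting the previous sentence. By Proposition~\ref{prop:diff1}, $f_i\dif f_j$ must therefore consist of exactly two cycles, i.e.\ $\{f_i,f_j\}$ is type-\1.

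For the base, I would relabel so that $i<j$ and apply Proposition~\ref{prop:2cycleI}: for the splitting vertices $a\<b\pq c\<d$ of $f_i\cup f_j$ we have $f_i\dif f_j=C_1\cup C_2$ with $C_1=af_ib\cup af_jb$ and $C_2=cf_id\cup cf_jd$, and $P_i\subseteq af_ib$, $P_j\subseteq cf_jd$, while the base is $f_k$ where $k$ is the least index with $c\in V(P_k)$. Hence it suffices to prove that $c$ is an inner vertex of $P_\ell$: then $c$ is created at step $\ell$, so $c\notin V(P_m)$ for every $m<\ell$, forcing $k=\ell$. Using $a\pq\ell_i\<r_i\pq b\pq c\pq\ell_j\<r_j\pq d$ one gets $af_ib=aL\ell_i\cup P_i\cup r_iRb$ and $cf_jd=cL\ell_j\cup P_j\cup r_jRd$, while $af_jb\subseteq L$ and $cf_id\subseteq R$. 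By the first paragraph $P_\ell$ has a non-shared edge in each of $C_1$ and $C_2$ (it cannot have its $f_i$-only and $f_j$-only edges in the same $C_m$, since its edges in $C_m$ all lie on one side of $C_m$). So $Q_1:=P_\ell\cap C_1$ and $Q_2:=P_\ell\cap C_2$ are nonempty $\<$-monotone subpaths of $P_\ell$, each confined to one side of its cycle; and since $P_\ell$ is edge-disjoint from $P_i$ and $P_j$, if $Q_1\subseteq af_ib$ then in fact $Q_1\subseteq aL\ell_i$ or $Q_1\subseteq r_iRb$, and similarly for $Q_2$ inside $cf_jd$.

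The decisive step is to show the $\<$-largest endpoint $q$ of $Q_1$ equals $b$, and symmetrically that the $\<$-smallest endpoint of $Q_2$ equals $c$. Since $V(C_1)$ lies in the $\<$-range $[a,b]$ and $Q_1$ has at least one edge, $a\<q\pq b$. If $q\<b$, then either $q=r_\ell$ --- impossible, since then $P_\ell$ has an edge in $C_2$ whose endpoints are $\sq c\sq b\sq q=r_\ell$, forcing them both to equal $r_\ell$ --- or $q$ has a successor on $P_\ell$, and by the definition of $Q_1$ that successor edge leaves $C_1$; but every edge of $f_i\cup f_j$ incident to $q$ lies in $C_1$ (as $a\<q\<b$ and $a,b\in V(f_i)\cap V(f_j)$), a contradiction. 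Hence $q=b$; this also rules out $Q_1\subseteq aL\ell_i$, which would give the absurd chain $b=q\pq\ell_i\<r_i\pq b$. It follows that $b,c\in V(P_\ell)$; moreover $b$ has a predecessor on $P_\ell$ inside $Q_1$ and a successor on $P_\ell$ (as $b\ne r_\ell$, again because of $P_\ell$'s edge in $C_2$), while $c$ has a predecessor along the monotone stretch of $P_\ell$ from $b$ to $c$ and a successor inside $Q_2$. Therefore $b$ and $c$ are inner vertices of $P_\ell$; in particular $c$ is, which as noted yields $k=\ell$, so $f_\ell$ is the base of $\{f_i,f_j\}$.

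I expect the decisive step --- forcing $Q_1$ up to the pole $b$ and $Q_2$ back to the pole $c$ --- to be the main obstacle, since it is where one must simultaneously use the $\<$-monotonicity of $P_\ell$, the edge-disjointness of distinct ears, the precise decomposition of the four sides of $C_1$ and $C_2$ into tree-arcs and the ears $P_i,P_j$, and the location of $a,b,c,d$ relative to $\ell_i,r_i,\ell_j,r_j$. Everything else should be routine once Proposition~\ref{prop:2cycleI} is available.
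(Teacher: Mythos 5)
Your proposal is correct, but it reaches the conclusion by a genuinely different route than the paper. The paper never looks at $f_i\dif f_j$ directly at first: it applies Proposition~\ref{prop:2cycleI} to the pairs $\{f_i,f_\ell\}$ and $\{f_j,f_\ell\}$ to see that neither can be type-\1 (type-\1 would force $E(P_\ell)\cap E(f_i)=\emptyset$ or $E(P_\ell)\cap E(f_j)=\emptyset$, against $P_\ell\subseteq f_i\cup f_j$ and Proposition~\ref{prop:P-notin-f}), so each of $f_i\cup f_\ell$ and $f_j\cup f_\ell$ has exactly two splitting vertices, say $a\<b$ and $c\<d$; a short argument (again using $P_\ell\subseteq f_i\cup f_j$) shows these interleave as $a\<b\pq c\<d$ along $f_\ell$ with $af_ib$ and $cf_jd$ internally disjoint, from which the two-cycle structure of $f_i\dif f_j$ and the identity of the base as the path $uf_jc\cup cf_iv=f_\ell$ are read off at once. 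You instead (1) force the two-cycle structure of $f_i\dif f_j$ by the $\<$-monotonicity of $P_\ell$ and of the two sides of each cycle of the symmetric difference, and (2) apply Proposition~\ref{prop:2cycleI} to $\{f_i,f_j\}$ itself and push $Q_1=P_\ell\cap C_1$ and $Q_2=P_\ell\cap C_2$ out to the poles $b$ and $c$, concluding that $b,c$ are inner vertices of $P_\ell$ and hence (since a vertex is an inner vertex of at most one ear, and the base's ear contains $c$ as an inner vertex) the base index is $\ell$. Both halves of your argument check out; in effect your ``decisive step'' re-derives for $P_\ell$ what the proof of Proposition~\ref{prop:2cycleI} does for $P_k$, whereas the paper short-circuits this by letting Proposition~\ref{prop:2cycleI} act on the pairs containing $f_\ell$. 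One step you should spell out is the assertion that the edges of $P_\ell$ lying in a fixed cycle $C_m$ form a subpath staying on one side of $C_m$: monotonicity alone does not give this -- you must also use $P_\ell\subseteq f_i\cup f_j$, noting that every vertex of $f_i\cup f_j$ whose order lies strictly between the poles of $C_m$ is internal to one of the two arcs and every edge of $f_i\cup f_j$ with both ends in that open range lies on $C_m$, so a monotone stretch of $P_\ell$ joining a private-$f_i$ edge to a private-$f_j$ edge of $C_m$ would have to pass from one open arc to the other without crossing a pole, which is impossible. With that observation made explicit, your proof is complete; it is somewhat longer and more hands-on than the paper's, but it has the mild advantage of exhibiting concretely where $P_\ell$ sits inside $f_i\cup f_j$ (namely, containing $b$ and $c$ as inner vertices).
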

\begin{proof}
By Proposition~\ref{prop:P-notin-f}, we see $P_\ell \not\subseteq f_i$ and $P_\ell \not\subseteq f_j$.
So $P_\ell \cap f_i\neq \emptyset$ and $P_\ell \cap f_j\neq \emptyset$.
By Proposition~\ref{prop:2cycleI}, $\{f_i,f_\ell\}$ and $\{f_j,f_\ell\}$ are not type-\1.
Let $a\prec b$ be the splitting vertices in $f_i\cup f_\ell$ and $c\prec d$ be the splitting vertices in $f_j\cup f_\ell$.
We may assume $a\pq c$.
If $c\<b$, since $E(cf_\ell b)\cap E(f_i\cup f_j)=\emptyset$, we see $E(cf_\ell b)\cap E(P_\ell)=\emptyset$.
This implies that either $E(P_\ell)\cap E(af_\ell b)=\emptyset$ or $E(P_\ell)\cap E(cf_\ell d)=\emptyset$, a contradiction.
Hence, $a\< b\pq c\< d$ lie in $f_\ell$.
If there exists some $z\in V(af_ib)\cap V(cf_jd)$, then we have a contradiction that $c\<z\<b$.
So $af_ib$ and $cf_jd$ are internally disjoint.
Now we see that $\{f_i,f_j\}$ is a type-\1 pair with base $f_\ell$.
\end{proof}

For paths $R_1, R_2$ in $G$, we write $R_1\pq R_2$ (resp., $R_1\<R_2$) if any $s\in V(R_1)$ and $t\in V(R_2)$ satisfy $s\pq t$ (resp., $s\<t$).

\begin{prop}\label{prop:degenerate-3}
Let $i, j, k, \ell$ be distinct. If $P_\ell\subseteq f_i\cup f_j\cup f_k$, then there exist $\alpha,\beta\in \{i,j,k\}$ such that $P_\ell\subseteq f_\alpha\cup f_\beta$ and thus $\{f_\alpha,f_\beta\}$ is type-\1 with base $f_\ell$.
\end{prop}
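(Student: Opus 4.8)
The plan is to reduce Proposition~\ref{prop:degenerate-3} to Proposition~\ref{prop:degenerate-2} by showing that the hypothesis $P_\ell\subseteq f_i\cup f_j\cup f_k$ forces $P_\ell$ to already live inside the union of just two of the three paths. First I would recall from Proposition~\ref{prop:orient-path} that each of $f_i,f_j,f_k,f_\ell$ is a $(u,v)$-path whose vertices increase in the order $\<$; in particular, $P_\ell$ is a subpath of the increasing path $f_\ell$, so its vertices form an interval $x=x_0\<x_1\<\cdots\<x_m=y$ in that order, where $x,y$ are the endpoints of $P_\ell$ (i.e., $x=\ell_\ell$, $y=r_\ell$). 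By Proposition~\ref{prop:P-notin-f}, $P_\ell\not\subseteq f_i$, $P_\ell\not\subseteq f_j$, and $P_\ell\not\subseteq f_k$ individually, so each edge of $P_\ell$ lies in at least one of the three but not all edges lie in a single one.

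The key step is to examine, for each edge $e\in E(P_\ell)$, the set $\{\,\alpha\in\{i,j,k\}: e\in E(f_\alpha)\,\}$, and to argue that as $e$ runs along $P_\ell$ the pattern of "which $f_\alpha$ contains $e$" can change only in a very controlled way. Concretely: if $f_\alpha$ and $f_\beta$ share a vertex $z$ that is an inner vertex of $P_\ell$, then $z$ is a splitting vertex of $f_\alpha\cup f_\beta$, and by the arguments in the proof of Proposition~\ref{prop:diff1} (using that $uf_\alpha z$ and $uf_\beta z$ both lie in the tree $L$ when $z$ is not inner to $P_\alpha$ or $P_\beta$, forcing $uf_\alpha z=uf_\beta z$, and symmetrically on the $v$-side through $R$), the two paths $f_\alpha,f_\beta$ must in fact agree on the entire segment of $P_\ell$ on one side of $z$. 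Since $P_\ell$'s vertices are distinct and inner vertices of $P_\ell$ are not shared across too many of the $f_\alpha$'s without collapsing, I would show that $E(P_\ell)$ is partitioned into at most two consecutive intervals $I_1$ (a prefix) and $I_2$ (a suffix), meeting at a single vertex $w\in V(P_\ell)$, such that $I_1\subseteq E(f_\alpha)$ and $I_2\subseteq E(f_\beta)$ for two (not necessarily distinct) indices $\alpha,\beta\in\{i,j,k\}$; here I use that a third path can only "enter" $P_\ell$ by sharing an inner vertex, which by the collapsing argument would make it coincide with one of the other two on a whole side. If $\alpha=\beta$ this contradicts $P_\ell\not\subseteq f_\alpha$, so $\alpha\ne\beta$ and $P_\ell\subseteq f_\alpha\cup f_\beta$. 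Then Proposition~\ref{prop:degenerate-2} applied to $\{f_\alpha,f_\beta\}$ (with $\ell$ in the role of the distinguished index, noting $\ell\notin\{\alpha,\beta\}$) gives that $\{f_\alpha,f_\beta\}$ is type-\1 with base $f_\ell$, which is exactly the conclusion.

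The main obstacle is making the "at most two consecutive intervals" claim precise and airtight: a priori the membership pattern along $P_\ell$ could alternate among $i,j,k$ many times, and one must rule this out using only the tree structure of $L$ and $R$ and the linear order $\<$. The clean way to handle it is to let $w$ be the $\<$-largest vertex of $P_\ell$ lying on $f_i$ (say; relabel so that the endpoint $x=\ell_\ell$ lies on $f_i$, which is possible since some $f_\alpha$ contains the first edge of $P_\ell$), show via Proposition~\ref{prop:pre-suc} and the $L$-tree argument that $xf_\ell w = xf_i w = x P_\ell w$, hence the prefix of $P_\ell$ up to $w$ lies in $f_i$; then the suffix $wP_\ell y$ has all its edges covered by $f_j\cup f_k$, and I repeat the same argument one level down — the $\<$-largest vertex $w'\in V(wP_\ell y)$ on $f_j$ gives $wf_\ell w'=wP_\ell w'\subseteq f_j$, and then $w'P_\ell y\subseteq f_k$, so that $w'P_\ell y$ lies in $f_k$; but now $w'$ is shared by $f_j$ and (via the suffix) $f_k$ while also lying strictly inside $P_\ell$ unless $w'=y$, and chasing this shared inner vertex through $R$ forces $w'f_\ell y=w'f_j y$ as well, collapsing the $f_k$-part into $f_j$ and giving $wP_\ell y\subseteq f_j$, hence $P_\ell\subseteq f_i\cup f_j$. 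Thus the three-interval scenario degenerates, and one always ends with two of the three paths doing all the work.
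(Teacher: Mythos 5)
The final reduction to Proposition~\ref{prop:degenerate-2} is the easy part; the heart of your argument is the claim that $E(P_\ell)$ splits into at most two consecutive blocks, each contained in a single $f_\alpha$, and the steps you offer for that claim do not hold. Concretely, you take $w$ to be the $\<$-largest vertex of $P_\ell$ lying on $f_i$ and assert $xf_iw=xP_\ell w$, i.e.\ that the whole prefix of $P_\ell$ up to $w$ lies in $f_i$. But $E(f_i)\cap E(P_\ell)=\bigl(E(L_i)\cap E(P_\ell)\bigr)\cup\bigl(E(R_i)\cap E(P_\ell)\bigr)$ can consist of two separated pieces: a prefix of $P_\ell$ reached through the tree $L$ and a suffix reached through $R$, with a gap in between that is covered by $f_j\cup f_k$. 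In that situation $w$ lies in the suffix piece (possibly $w=r_\ell$), the prefix of $P_\ell$ up to $w$ is not contained in $f_i$, and $xf_iw$ is a detour leaving $P_\ell$, so the step fails. The same defect recurs one level down with $w'$ and $f_j$, and there you additionally conflate vertex membership with edge coverage: $w'$ may lie on $f_j$ only through $L_j$ or as an attachment point ($\ell_m$/$r_m$ of some ear), in which case ``chasing through $R$'' does not give $w'f_\ell y\subseteq f_j$. Whether a shared inner vertex of $P_\ell$ forces agreement on the $u$-side (through $L$) or on the $v$-side (through $R$) depends on how that vertex sits on the other path, and your argument never controls this; so the ``at most two consecutive intervals'' claim — which you yourself identify as the main obstacle — is not established.

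For comparison, the paper's proof avoids all of this: assuming no two of the three paths suffice, it picks edges $e_i\in E(P_\ell)\cap E(f_i)\setminus E(f_j\cup f_k)$, and similarly $e_j,e_k$, relabelled so that $e_j$ lies between $e_i$ and $e_k$ along $f_\ell$. Since $e_j\in E(P_\ell)\cap E(f_j)$, Proposition~\ref{prop:2cycleI} rules out $\{f_j,f_\ell\}$ being type-\1, so $f_j\cap f_\ell$ is a prefix plus a suffix of $f_\ell$; whichever of the two contains $e_j$ must also contain $e_i$ or $e_k$, a contradiction. If you want to salvage your route, the correct observation is that $E(f_\alpha)\cap E(P_\ell)$ is always a prefix plus a suffix of $P_\ell$ (because $E(P_\alpha)\cap E(P_\ell)=\emptyset$ and in the rooted trees $L$, $R$ the inner vertices of $P_\ell$ hang off $\ell_\ell$, respectively $r_\ell$); then the longest prefix and the longest suffix over $\alpha\in\{i,j,k\}$ already cover $P_\ell$, and by Proposition~\ref{prop:P-notin-f} they cannot both come from the same $f_\alpha$, which yields the two desired indices directly, without any ``largest vertex'' or collapsing argument.
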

\begin{proof}
If there are $\alpha,\beta\in \{i,j,k\}$ such that $P_\ell\subseteq f_\alpha\cup f_\beta$, then the conclusion follows by Proposition~\ref{prop:degenerate-2}.
So we may assume that there exist edges $e_i, e_j, e_k$ in $P_\ell$ such that $e_i\in f_i\bh (f_j\cup f_k)$, $e_j\in f_j\bh (f_i\cup f_k)$, and $e_k\in f_k\bh (f_i\cup f_j)$.
Without loss of generality, we may assume that $e_i\pq e_j\pq e_k$ lie in $f_\ell$.
Clearly $\{f_j,f_\ell\}$ is not type-\1 (as otherwise, $E(P_\ell)\cap E(f_j)=\emptyset$ by Proposition~\ref{prop:2cycleI}).
We have $e_j\in E(f_j\cap f_\ell)$.
So $f_j$ contains either the subpath of $f_\ell$ from $u$ to $e_j$ or the subpath of $f_\ell$ from $e_j$ to $v$,
which implies either $e_i\in E(f_j)$ or $e_k\in E(f_j)$, a contradiction. This completes the proof.
\end{proof}

\section{Almost all pairs are normal}\label{sec:normal-pairs}

For any subset $\dA$ of $\dF$,
we define $G(\dA)$ to be the subgraph of $G$ consisting of all edges $e$, which appears in some path of $\dA$ but not in every path of $\dA$.
Note that if $\dA\subseteq \dB\subseteq \dF$, then $G(\dA)$ is a subgraph of $G(\dB)$.
We say $\{x,y\}\subseteq \bigcap_{f_i\in \dA} V(f_i)$ is the {\bf separator} of $\dA$, if $G(\dA)\subseteq \bigcup_{f_i\in \dA} xf_iy$ and subject to this, $\bigcup_{f_i\in \dA} xf_iy$ is minimal.

The main result of this section is the following lemma.
\begin{lem}\label{lem:normal}
There exist disjoint subsets $\dF_1, \dF_2, \dF_3, \dF_4$ in $\dF$ such that $\sum_{i\in [4]} |\dF_i|\geq s-90\sqrt{n}/\log n$,
all $G(\dF_i)$'s are edge-disjoint, and each $\dF_i$ contains at most $2\sqrt{n}\log^2 n$ pairs of type-\1 and type-\2.
\end{lem}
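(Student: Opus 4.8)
The plan is to attach each type-\1 and type-\2 pair to its base, bound how many such pairs a single base can support, deduce that there are few bad pairs in total (after discarding a negligible set of paths), and finally fold $\dF$ into four groups along a laminar structure of edge-usage so that $G(\dF_1),\dots,G(\dF_4)$ are edge-disjoint. I would begin with the local picture of a bad pair. Feeding Proposition~\ref{prop:2cycleI} into the identity $f_i\dif f_j=(f_i\dif f_k)\dif(f_k\dif f_j)$ and observing that the two summands are edge-disjoint (since $P_i\subseteq af_ib$ with $b\pq c$ while $P_j\subseteq cf_jd$), one gets that a type-\1 pair $\{f_i,f_j\}$ with base $f_k$ satisfies $f_i\dif f_j=(f_i\dif f_k)\cup(f_j\dif f_k)$, where $f_i\dif f_k$ is a single cycle through all of $P_i$ that re-joins $f_k$ exactly at the inner vertex $c$ of $P_k$, and $f_j\dif f_k$ is a single cycle through all of $P_j$ that leaves $f_k$ exactly at $c$; the type-\2 case is parallel via the base crossing path. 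Thus every bad pair with base $f_k$ is recorded by two ``$f_k$-detour cycles'' $f_i\dif f_k,\,f_j\dif f_k$ that \emph{cling} to $P_k$, meaning each uses exactly one of the two $P_k$-edges at some inner vertex of $P_k$, with the clinging vertex of the first weakly preceding that of the second under $\<$.

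\textbf{Counting the bad pairs.} First discard the $O(\sqrt n/\log n)$ indices $k$ with $|E(P_k)|>\sqrt n\log n$, which is legitimate because $\sum_k|E(P_k)|\le|E(G)|=n+s$. For a surviving base $f_k$ I would show that only $O(\log n)$ distinct cycles cling to $P_k$: whenever several cycles cling to $P_k$ at the same vertex on the same side, their pairwise symmetric differences, combined where necessary with the short subpaths of $P_k$ between consecutive clinging vertices, produce a family of cycles whose lengths obey additive relations, so \eqref{equ:(n-2)cycles} forces these lengths to form a Sidon-type set; a dyadic/charging refinement — paid for by moving a further $O(\sqrt n/\log n)$ exceptional bases into the discard set — improves the resulting $O(\sqrt n)$ Sidon estimate to $O(\log n)$ for the remaining bases. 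Hence each surviving base carries $O(\log^2 n)$ bad pairs, and since $s=O(\sqrt n)$ there are at most $O(\sqrt n\log^2 n)$ type-\1 and type-\2 pairs avoiding all discarded paths.

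\textbf{The four-way split.} Form the graph $H$ on $\dF$ whose edges are the surviving bad pairs; it is sparse, but the parts must also have pairwise edge-disjoint $G(\dF_i)$, i.e.\ each edge of $G$ must be split by at most one of the four parts. Using that the base of a bad pair has strictly smaller index than both its members (Proposition~\ref{prop:2cycleI}), so that $H$ carries an orientation toward earlier indices with a forest-like/interval-like shape, I would process $f_0,\dots,f_s$ in index order, maintaining at most four active blocks, each equipped with a separator $\{x,y\}$ in the sense of this section (so that $G$ of the block is confined to the fan $\bigcup_{f}xf y$), and assigning each new $f_k$ to an active block whose fan already contains essentially all of $f_k\dif f_{k'}$ for the earlier bad partners $f_{k'}$ of $f_k$. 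When no active block accommodates $f_k$, one active block is frozen into some $\dF_i$ and its few obstructing members are discarded. A counting argument bounds the number of freezes and the total discard by $90\sqrt n/\log n$ and shows at most four blocks are ever simultaneously active (the fans of five active blocks would overlap so as to create two equal-length cycles); since the surviving bad pairs number $O(\sqrt n\log^2 n)$ in total, each block inherits at most $2\sqrt n\log^2 n$ of them after tuning constants.

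\textbf{Where the difficulty lies.} I expect two points to be genuinely hard. In the counting step, one must push the per-base bound down from the $O(\sqrt n)$ Sidon estimate to $O(\log n)$ while discarding only $O(\sqrt n/\log n)$ bases; making the ``additive relations among derived cycles'' precise, and isolating exactly which bases must be sacrificed, is the technical heart. In the partition step, the fan bookkeeping is delicate: one must understand, for each edge $e$, the index set $\{i:e\in f_i\}$ well enough that exactly one of the four parts is split by $e$, while simultaneously keeping the monochromatic bad pairs under control and the discard small. This edge-versus-index control is precisely the information that is sharpened and exploited in Lemma~\ref{lem: consecutive}.
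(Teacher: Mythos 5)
There is a genuine gap, and it is in the step your whole plan rests on: the claim that, after discarding $O(\sqrt n/\log n)$ paths, the \emph{total} number of type-\1 and type-\2 pairs is $O(\sqrt n\log^2 n)$, obtained from a per-base bound of $O(\log^2 n)$. That per-base bound is not just unproven (the ``Sidon-type set'' plus ``dyadic/charging refinement'' is not an argument) — it is false. The subset-sum/Sidon explosion you invoke only works when the detour cycles of different pairs sharing a base can be recombined into new cycles of $G$, which requires them to attach to $P_k$ along disjoint spans. Nothing forces this. Concretely, fix $f_k=f_0$ and build two gadgets: on the left of some inner vertex of $P_0$, a nested chain of $m_L$ ears (each ear attached to the interior of the previous one), and symmetrically a nested chain of $m_R$ ears on the right. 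The paper's ear-decomposition then produces $m_L$ paths agreeing with $f_0$ outside one left interval and $m_R$ paths agreeing with $f_0$ outside one right interval, and every left/right pair $\{f_i,f_j\}$ has $f_i\dif f_j$ equal to two disjoint cycles, i.e.\ is type-\1 with base $f_0$. Because the detours are nested, the cycles generated are only those corresponding to intervals of nesting levels on each side plus the $(u,v)$-routes through $uv$, i.e.\ $\Theta(m_L^2+m_R^2+m_Lm_R)$ cycles, so with $m_L=m_R=\epsilon\sqrt n$ and a Sidon-style choice of lengths (exactly the flavour of the lower-bound construction) the hypothesis \eqref{equ:(n-2)cycles} and distinctness of cycle lengths can be satisfied while a \emph{single} base carries $\Theta(n)$ type-\1 pairs; deleting $O(\sqrt n/\log n)$ paths cannot destroy them. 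This is precisely why the statement of Lemma~\ref{lem:normal} does not bound the total number of bad pairs: it only bounds them \emph{inside} each of four parts, and cross-part bad pairs (as in this example, where the left-detour paths and right-detour paths would go into different parts with edge-disjoint $G(\dF_i)$'s) are left uncontrolled on purpose.

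The paper's route is correspondingly different and you would need its machinery (or a substitute) anyway: it proves two separate splitting lemmas, one for type-\1 and one for type-\2 pairs, and intersects the two bipartitions to get the four parts. Each splitting lemma is proved by a dichotomy-plus-iteration: using the count of \emph{feasible triples and quadruples} (bounded by the cycle count, Proposition~\ref{prop:feasible}, via the sets $\dW_{ij}$), any bad pair with small $\dW_{ij}$ forces the current family to decompose into an $\dM$-side and an $\dN$-side with edge-disjoint $G$'s; if both sides are large one is done, and otherwise one descends into the large side with a strictly shrinking separator, so that after $\Theta(\log n)$ descents each surviving path extends in $2^t$ ways to distinct $(u,v)$-paths, producing more than $n$ cycles and contradicting \eqref{equ:(n-2)cycles}. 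Your ``four-way split'' sketch (active blocks, freezing, ``five fans would create two equal-length cycles'') contains none of this quantitative mechanism and, as written, could not recover the edge-disjointness of the $G(\dF_i)$'s together with the per-part bound on bad pairs; but the decisive problem is the counting step above, which makes the proposal unworkable as a whole.
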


We show that the proof of Lemma \ref{lem:normal} can be reduced to two lemmas in below.

\begin{lem} \label{lem: type 1}
    There exist two disjoint subsets $\dA_1,\dA_2$ in $\dF$ such that $|\dA_1|+|\dA_2|>s-30\sqrt{n}/\log n$,
    $G(\dA_1)$ and $G(\dA_2)$ are edge-disjoint, and each $\dA_i$ contains at most $\sqrt{n}\log^2n$ type-\1 pairs.
\end{lem}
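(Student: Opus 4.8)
The whole plan is driven by Proposition~\ref{prop:2cycleI}. It attaches to each type-\1 pair $\{f_i,f_j\}$ with $i<j$ a unique \emph{base} $f_k$ with $k<i$, together with splitting vertices $a\prec b\pq c\prec d$ of $f_i\cup f_j$ for which $P_i\subseteq af_ib$, $P_j\subseteq cf_jd$, and, crucially, $b$ and $c$ are \emph{inner} vertices of $P_k$. Combined with Proposition~\ref{prop:orient-path}, this yields a clean order-theoretic picture: $V(P_i)\pq b\pq c\pq V(P_j)$, so the ears of any type-\1 pair occupy disjoint $\prec$-intervals, with the interior of $P_k$ spanning the gap between them. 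The first step is to record this, together with $\ell_1\pq\ell_2\pq\cdots\pq\ell_s$ and $\ell_k\pq b$, both consequences of the ear-choice rule.

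The argument then splits into two essentially independent parts. \emph{(A) Bounding the type-\1 pairs.} For a type-\1 pair based at $f_k$, both $f_i\dif f_k$ and $f_j\dif f_k$ are genuine cycles of $G$ (this is implicit in the proof of Proposition~\ref{prop:2cycleI}), and since $b,c$ are inner vertices of $P_k$ these cycles must use edges inside the ear $P_k$. For a \emph{fixed} base $f_k$ the map $i\mapsto f_i\dif f_k$ is injective, so such a pair is recorded by an ordered pair of distinct cycles of $G$, each meeting the interior of $P_k$. Since the ears are pairwise edge-disjoint, I would, by tracking for each cycle which ear-interiors it meets and where it enters and leaves them, charge the type-\1 pairs against the cycles of $G$ so that no cycle is charged too many times; with \eqref{equ:(n-2)cycles} bounding the number of cycles and there being only $s+1=|\dF|$ candidate bases, this should show that $\dF$ contains at most $\sqrt n\log^2 n$ type-\1 pairs in total (or, failing that, at most this many after discarding a set of $O(\sqrt n/\log n)$ ``heavy'' paths), which settles the type-\1 requirement on $\dA_1,\dA_2$ however they are chosen.

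\emph{(B) The bipartition.} I would split $\dF$ along a well-chosen threshold of $\prec$: roughly, $\dA_1$ collects the paths whose variable part lies $\prec$-below the threshold and $\dA_2$ those whose variable part lies above, discarding the paths whose variable part straddles it. Picking the threshold to minimise the number of straddling paths --- a pigeonhole over the $n$ vertex positions, for which one bounds the total $\prec$-span of the variable parts again using \eqref{equ:(n-2)cycles} --- keeps the discarded set of size at most $30\sqrt n/\log n$; for this split the separators (in the sense defined just before Lemma~\ref{lem:normal}) of $\dA_1$ and $\dA_2$ are disjoint, so $G(\dA_1)$ and $G(\dA_2)$ are edge-disjoint. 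Removing, if necessary, the small heavy set from (A) as well, this yields the lemma.

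The step I expect to be the real obstacle is the charging in part (A): the crude count of type-\1 pairs against cycles is far too lossy, and one must genuinely exploit the nesting/interval structure forced by choosing ears ``as far left as possible'' (and by $2$-connectivity) to see that the cycles meeting a given ear-interior interleave tamely enough to be paired in only $O(\log^2 n)$ ways --- which is presumably also the origin of the logarithmic factors in the statement.
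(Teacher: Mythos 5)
Your plan rests on a claim that is much stronger than the lemma itself and for which you give no actual argument: in part (A) you want to show that $\dF$ as a whole contains at most $\sqrt n\log^2 n$ type-\1 pairs (possibly after deleting $O(\sqrt n/\log n)$ paths). The charging you sketch cannot deliver this. The natural injection you describe --- a type-\1 pair together with its base gives a degenerate feasible triple, and each feasible triple contributes a distinct cycle of $G$ --- is exactly what the paper uses in Proposition~\ref{prop:feasible}, and via \eqref{equ:(n-2)cycles} it yields only the bound $n$ on the number of type-\1 pairs, nowhere near $\sqrt n\log^2 n$; you acknowledge the gap yourself (``the real obstacle''), but the missing step is not a refinement of the charging, it is the heart of the lemma. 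In the paper the threshold $\sqrt n\log^2 n$ arises from an entirely different balance: if \emph{every} type-\1 pair $\{f_i,f_j\}$ in the current subset has $|\dW_{ij}|\geq 51\sqrt n/\log^2 n$, then \eqref{equ:Wij} caps the number of such pairs by $\sqrt n\log^2 n$; and when some pair has a \emph{small} $\dW_{ij}$, no bound on the number of pairs is attempted at all --- instead Propositions~\ref{prop:non-feasible-triple-typeI} and \ref{prop:MN-typeI} show every path outside $\dW_{ij}$ lies in $\dM_{ij}\cup\dN_{ij}$, which splits the family into two order-separated parts (Proposition~\ref{prop:typeI-iterate}). The lemma is then proved by contradiction through an iteration you have no counterpart of: each failed split shrinks the separator and guarantees that every path through the new separator extends to \emph{two} distinct paths through the old one, so after roughly $\tfrac{30}{53}\log n$ rounds one manufactures at least $2^t\sqrt n/2\geq n$ distinct cycles, contradicting \eqref{equ:(n-2)cycles}. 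Without this doubling mechanism there is no known route from the cycle bound to anything better than ``at most $n$ type-\1 pairs''.

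Part (B) has a gap of its own. To make a single threshold split lose only $30\sqrt n/\log n$ paths you need the total $\prec$-span of the variable parts to be $O(n^{3/2}/\log n)$, and nothing in \eqref{equ:(n-2)cycles} gives this: the hypothesis bounds the number of distinct cycles, but many pairs $f_i,f_j$ may realize the same symmetric difference, so one cannot convert the cycle count into a bound on how many paths straddle a given position; a family in which every $f_i$ deviates both near $u$ and near $v$ is not excluded at this point. In the paper the ``split'' is never a one-shot pigeonhole over vertex positions; the separator pair $\{x,y\}$ and the sets $\dM_{ij},\dN_{ij}$ coming from a specific type-\1 pair are what provide the edge-disjointness, and the size loss per round ($53\sqrt n/\log^2 n$, accumulated over $\leq\log n$ rounds to give $30\sqrt n/\log n$) is controlled by the smallness of $\dW_{ij}$, not by an averaging over thresholds. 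So both halves of your proposal are missing the ideas that actually carry the proof.
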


\begin{lem}\label{lem: type 2}
    There exist two disjoint subsets $\dB_1,\dB_2$ in $\dF$ such that $|\dB_1|+|\dB_2|>s-60\sqrt{n}/\log n$,
    $G(\dB_1)$ and $G(\dB_2)$ are edge-disjoint, and each $\dB_i$ contains at most $\sqrt{n}\log^2n$ type-\2 pairs.
\end{lem}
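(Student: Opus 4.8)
The plan is to produce a cut $\dF=\dB_1\sqcup\dB_2\sqcup D$ with $D$ a small discarded set, arranged so that the edge-disjointness of $G(\dB_1)$ and $G(\dB_2)$ is forced by the linear order $\prec$, and then to show that for a good choice of cut each of $\dB_1,\dB_2$ carries at most $\sqrt n\log^2 n$ type-\2 pairs. For the cut, fix a vertex $z\in V(G)$ and set $\dB_1=\{f_i: zRv\subseteq R_i\}$, $\dB_2=\{f_i: uLz\subseteq L_i\}\setminus\dB_1$, and $D=\dF\setminus(\dB_1\cup\dB_2)$. Every member of $\dB_1$ contains the fixed sub-path $zRv$, so $G(\dB_1)$ has no edge of $zRv$; moreover for $f_i\in\dB_1$ one has $\ell_i\prec r_i\preceq z$, so by Proposition~\ref{prop:orient-path} every edge of $f_i$ lying outside $zRv$ has both endpoints $\preceq z$. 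Hence every edge of $G(\dB_1)$ has both endpoints $\preceq z$, and symmetrically every edge of $G(\dB_2)$ has both endpoints $\succeq z$; a common edge would then have both endpoints equal to $z$, which is impossible, so $G(\dB_1)$ and $G(\dB_2)$ are edge-disjoint. It remains to pick $z$ with $|D|\le 60\sqrt n/\log n$ and so that few type-\2 pairs stay inside $\dB_1$ or $\dB_2$; the bound on $|D|$ should follow for a suitably chosen median vertex $z$ by an averaging argument, using \eqref{equ:(n-2)cycles}.

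The second ingredient is the anatomy of a type-\2 pair. If $\{f_i,f_j\}$ is type-\2 with base $f_\ell$ and $\ps(i,\ell)=af_ic$, $\ps(j,\ell)=bf_jd$ with $a\prec b\prec c\prec d$ on $f_\ell$, then in the spirit of Propositions~\ref{prop:2cycleI}--\ref{prop:degenerate-3} one shows that $\ell<\min\{i,j\}$, that $f_i$ agrees with $f_\ell$ outside the $\prec$-interval $[a,c]$ (so $[a,c]$, which contains the $\prec$-span of the ear $P_i$, is the ``active interval'' of $f_i$), that $f_j$ likewise agrees with $f_\ell$ outside $[b,d]$, and that the crossing window $bf_\ell c$ is an inner sub-path of the ear $P_\ell$. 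Consequently a type-\2 pair lying entirely inside $\dB_r$ has both active intervals on the same side of $z$, and for a well-chosen $z$ one checks that the only such pairs not separated by $z$ are of one restricted shape (roughly, those in which the ears $P_i$ and $P_j$ overlap in $\prec$-range rather than lying $\prec$-consecutively); so the surviving type-\2 pairs inside $\dB_r$ are all of that shape, and it is these that have to be counted.

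The count is where the two logarithmic factors come from. First classify the surviving paths $f_i$ dyadically by the $\prec$-length of their active interval, losing a factor of $\log n$, so that within a class all active intervals have comparable length and the possible bases $f_\ell$ are organized accordingly. Within one class, a surviving type-\2 pair $\{f_i,f_j\}$ produces the cycle $f_i\dif f_j$ together with the cycles $f_i\dif f_\ell$ and $f_j\dif f_\ell$ against the shared base $f_\ell$; since all cycle lengths of $G$ are distinct and, by \eqref{equ:(n-2)cycles}, there are at most $n-2$ of them, I would build an injection from a large subfamily of the surviving pairs into the cycle set of $G$, arguing that within a length class the length of $f_i\dif f_j$ pins down the pair up to a logarithmic ambiguity -- a Sidon-type estimate responsible for the second factor of $\log n$. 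This bounds the surviving type-\2 pairs on each side of $z$ by $O(\sqrt n\log^2 n)$, and together with the first step a good choice of $z$ yields disjoint $\dB_1,\dB_2$ with $|\dB_1|+|\dB_2|>s-60\sqrt n/\log n$ and at most $\sqrt n\log^2 n$ type-\2 pairs inside each.

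The step I expect to be the main obstacle is precisely this count, and inside it the case in which many type-\2 pairs pile up on a single base $f_\ell$ -- equivalently, around one common vertex -- which is exactly the configuration that no single pivot $z$ can break apart. Handling it will require extracting extra rigidity from Propositions~\ref{prop:diff1}--\ref{prop:degenerate-3}: one must show that the crossing windows of type-\2 pairs sharing a base cannot be too numerous, or that such a family must realize too many distinct cycle lengths, and then reconcile this with the demand that the cut remain local enough to keep $G(\dB_1)$ and $G(\dB_2)$ edge-disjoint. By comparison, the verification of edge-disjointness and the dyadic bookkeeping should be routine once the structural description of type-\2 pairs and the injection into the cycle set are established.
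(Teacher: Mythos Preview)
Your proposal has a genuine gap, and it lies exactly where you suspect: the count of type-\2 pairs surviving the single cut at $z$. The edge-disjointness of $G(\dB_1)$ and $G(\dB_2)$ via the linear order is fine, and an averaging choice of $z$ could plausibly control $|D|$. But the dyadic-plus-Sidon step is only a sketch, and I do not see how it can close. Distinctness of cycle lengths gives at most $n$ cycles in total, but you need to bound the number of type-\2 pairs inside $\dB_r$ by $\sqrt n\log^2 n$; the map $\{f_i,f_j\}\mapsto f_i\dif f_j$ is injective into a set of size $n$, which is far too weak, and nothing in your outline explains why restricting to one dyadic class and one side of $z$ brings this down to $O(\sqrt n\log n)$. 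You yourself flag the bad configuration --- many type-\2 pairs piling on a single base $f_\ell$ --- and note that no single pivot $z$ separates it; the ``extra rigidity'' you hope to extract from Propositions~\ref{prop:diff1}--\ref{prop:degenerate-3} is not identified.

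The paper's proof avoids this counting problem entirely by a different mechanism. It never picks a single cut vertex. Instead it iterates: given a current subset $\dT$, either $\dT$ already splits as desired, or one can find a type-\2 pair $\{f_i,f_j\}$ in $\dT$ with $|\dW_{ij}|$ small; Propositions~\ref{prop:non-feasible-triple-type2} and~\ref{prop:AB-typeII} then force almost all of $\dT$ into two edge-disjoint families $\dA_{ij},\dB_{ij}$. If both are large, the feasible-triple machinery and \eqref{equ:Wij} force one of them to have few type-\2 pairs, giving the split; otherwise one side is tiny and we recurse on the other, with the key gain that every $(x',y')$-path in the smaller subset extends to \emph{two} distinct $(x,y)$-paths in the larger one (Proposition~\ref{prop:typeII-iterate}). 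After $\Theta(\log n)$ iterations this doubling produces $\ge n$ distinct $(u,v)$-paths, hence $\ge n$ cycles, contradicting \eqref{equ:(n-2)cycles}. The logarithmic factors in the lemma's bounds are artifacts of this iterative bookkeeping, not of a direct Sidon count; your single-cut approach has no analogue of the doubling step, and that is the missing idea.
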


\medskip

{\noindent \bf Proof of Lemma \ref{lem:normal} (Assume Lemmas \ref{lem: type 1} and \ref{lem: type 2}).}
Let $\dA_1,\dA_2$ be obtained from Lemma \ref{lem: type 1} and $\dB_1,\dB_2$ be obtained from Lemma \ref{lem: type 2}.
For $1\leq i, j\leq 2$, let $\dC_{ij}=\dA_i\cap \dB_j$.
Following the properties of $\dA_1,\dA_2, \dB_1, \dB_2$, it is easy to see that the four subsets $\dC_{ij}$'s are disjoint,
each $\dC_{ij}$ contains at most $2\sqrt{n}\log^2 n$ pairs of type-\1 and type-\2,
and $$\sum_{1\leq i, j\leq 2} |\dC_{ij}|=|(\dA_1\cup \dA_2)\cap (\dB_1\cup \dB_2)|\geq |\dA_1\cup \dA_2|+|\dB_1\cup \dB_2|-|\dF|\geq s-90\sqrt{n}/\log n.$$
It remains to show that all $G(\dC_{ij})$'s are edge-disjoint.
Fix $i\in [2]$. Since $\dC_{ij}\subseteq \dB_j$ for each $j\in [2]$, by the observation before Lemma \ref{lem:normal},
we have $G(\dC_{ij})\subseteq G(\dB_j)$ for each $j\in [2]$.
We also see from Lemma \ref{lem: type 2} that $G(\dB_1)$ and $G(\dB_2)$ are edge-disjoint,
so it is clear that $G(\dC_{i1})$ and $G(\dC_{i2})$ are edge-disjoint for all $i\in [2]$.
Similarly, $G(\dC_{1j})$ and $G(\dC_{2j})$ are edge-disjoint for all $j\in [2]$, finishing the proof. \qed

\medskip

For the proofs of Lemmas \ref{lem: type 1} and \ref{lem: type 2} (also for the proof in Section~\ref{sec:reorder-dF}),
we need to introduce some notation on collections of paths in $\dF$, which are used to generate cycles of some special characters.
\begin{itemize}
\item A triple $\{f_i,f_j,f_\ell\}$ in $\dF$ is called {\bf feasible}, if $\{uv\}\cup f_i\cup f_j\cup f_\ell$ contains a cycle $C(i,j,\ell)$
such that for all possible $\{\alpha,\beta,\gamma\}=\{i,j,\ell\}$, either $P_\alpha \backslash (f_\beta\cup f_\gamma)=\emptyset$ or $C(i,j,\ell)$ contains some edge in $P_\alpha \backslash (f_\beta\cup f_\gamma)$.
Such a cycle $C(i,j,\ell)$ is called {\bf 3-feasible}.

\item A quadruple $\{f_i,f_j,f_k,f_\ell\}$ in $\dF$ is called {\bf feasible}, if $\{uv\}\cup f_i\cup f_j\cup f_k\cup f_\ell$ contains a cycle $C(i,j,k,\ell)$
such that for all possible $\{\alpha,\beta,\gamma,\theta\}=\{i,j,k,\ell\}$,
either $P_\alpha \backslash (f_\beta\cup f_\gamma\cup f_\theta)=\emptyset$ or $C(i,j,k,\ell)$ contains some edge in $P_\alpha \backslash (f_\beta\cup f_\gamma\cup f_\theta)$.
Such a cycle $C(i,j,k,\ell)$ is called {\bf 4-feasible}.
\end{itemize}

\subsection{The number of feasible tuples}\label{subsec:feasible-cycles}

In the following proposition, we estimate the number of feasible triples and quadruples in $\dF$.

\begin{prop}\label{prop:feasible}
There are at most $ n$ feasible triples and at most $4n$ feasible quadruples in $\dF$.
\end{prop}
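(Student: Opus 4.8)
The plan is to bound feasible triples and quadruples by injectively associating to each such tuple a distinct cycle of $G$, so that \eqref{equ:(n-2)cycles} (which gives at most $n-2$ cycles total) yields the count. Concretely, for a feasible triple $\{f_i,f_j,f_\ell\}$ I would show that the 3-feasible cycle $C(i,j,\ell)$ is essentially \emph{determined} by the triple, and that distinct feasible triples give distinct cycles; since there are at most $n-2\le n$ cycles, this gives at most $n$ feasible triples. Similarly each feasible quadruple $\{f_i,f_j,f_k,f_\ell\}$ should be charged to a cycle, but here the map need not be injective — I expect a bounded multiplicity (at most $4$) of quadruples mapping to a common cycle, yielding at most $4n$.

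First I would analyze the structure of a 3-feasible cycle. Using Proposition~\ref{prop:orient-path} (each $f_m$ is a monotone $(u,v)$-path in $\prec$) together with Propositions~\ref{prop:LiRj}–\ref{prop:diff1}, the symmetric differences $f_i\dif f_j$, $f_j\dif f_\ell$, $f_i\dif f_\ell$ are unions of at most two cycles each, so the union $f_i\cup f_j\cup f_\ell$ (possibly with the chord $uv$) is a "theta-like" planar strip: the splitting vertices split $[u,v]$ into consecutive blocks, and on each block at most three parallel arcs run. A cycle inside this configuration that, by feasibility, is forced to use an edge of $P_\alpha\setminus(f_\beta\cup f_\gamma)$ whenever that set is nonempty must in fact pick out a specific combination of arcs on each block; I would argue the feasibility condition pins down the cycle uniquely given the (unordered) triple — intuitively, on each block the cycle is forced to the "outermost available" arcs because the private edges of each $P_\alpha$ lie on $f_\alpha$'s own arc. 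Then, for injectivity, suppose two feasible triples produced the same cycle $C$; since $C$ uses a private edge of each $P_\alpha$ in the triple (when nonempty), and $P_\alpha\setminus(f_\beta\cup f_\gamma)$ records which ear index $\alpha$ is involved, the multiset of ear-indices $\{i,j,\ell\}$ (hence the triple of paths) is recoverable from $C$, except possibly in degenerate cases where some $P_\alpha\setminus(f_\beta\cup f_\gamma)=\emptyset$. I would handle those degenerate cases separately using Propositions~\ref{prop:degenerate-2} and \ref{prop:degenerate-3}: if $P_\ell\subseteq f_i\cup f_j$ then $\{f_i,f_j\}$ is type-\1 with base $f_\ell$, so $f_\ell$ (and the whole triple) is still determined by the pair $\{f_i,f_j\}$, which is in turn read off from $C$.

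For quadruples the same strategy applies: a 4-feasible cycle lives in the strip $f_i\cup f_j\cup f_k\cup f_\ell$ (plus possibly $uv$), now with at most four parallel arcs per block, and feasibility forces $C$ to use a private edge of each $P_\alpha$ when available. The new feature is that recovering four indices from one cycle is less rigid — e.g. a given cycle might arise as the feasible cycle of several quadruples differing in which path plays the "redundant" role when some $P_\alpha\setminus(\cdots)$ is empty, and Proposition~\ref{prop:degenerate-3} only guarantees that \emph{two} of the four indices are pinned down together with their common base. Counting how many quadruples can share a cycle, I expect the ambiguity to be at most a constant, and $4$ should suffice after a short case check (the "extra" path beyond the three that are forced can be chosen in boundedly many ways because it must still be consistent with the arc structure of $C$). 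The main obstacle is precisely this injectivity/bounded-multiplicity argument: verifying that the feasibility conditions really do determine the cycle from the tuple, and conversely that a cycle does not secretly correspond to too many tuples — this requires a careful case analysis of how the forced arcs interact on each block of the strip, and of the degenerate situations where a private-edge set is empty. Once that structural bookkeeping is done, the numerical bounds $n$ and $4n$ drop out immediately from \eqref{equ:(n-2)cycles}.
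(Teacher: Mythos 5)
Your high-level plan — charge each feasible tuple to a cycle and invoke \eqref{equ:(n-2)cycles} — is exactly the paper's strategy, but the step that carries all the weight, injectivity of this charging, is not actually established in your sketch. Your proposed recovery of the triple from the cycle ("the multiset of ear-indices is recoverable from $C$ because $C$ uses a private edge of each $P_\alpha$") is circular: "private edge of $P_\alpha$" is defined only relative to a given triple, and $C$ contains edges of many ears $P_m$ with $m$ outside the triple, carried by the tree portions $L_{(\cdot)},R_{(\cdot)}$ of the three paths, so one cannot read the index set off $C$. The paper's proof instead compares two non-degenerate triples assumed to share a cycle, orders each triple, and works down from the largest index, using the key structural fact (a consequence of the ear decomposition and Proposition~\ref{prop:pre-suc}) that $E(P_k)\cap E(f_m)=\emptyset$ whenever $m<k$: if $k>k'$ then $C=C(i',j',k')$ lies in $\{uv\}\cup f_{i'}\cup f_{j'}\cup f_{k'}$ and hence contains no edge of $P_k$, contradicting feasibility of $\{f_i,f_j,f_k\}$; iterating gives $k=k'$, $j=j'$, $i=i'$. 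Nothing in your "theta-strip/outermost available arcs" picture supplies this monotonicity input, and your auxiliary claim that feasibility determines the cycle uniquely from the triple is both unnecessary (one only needs to fix one feasible cycle per tuple) and unsupported — the definition merely asserts existence of some such cycle.

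The quadruple bound has a second gap. You assert a multiplicity of at most $4$ for the map quadruple $\mapsto$ cycle, but the paper's $4n$ comes from a different and genuinely needed decomposition: non-degenerate quadruples inject into cycles exactly as above (at most $n$), while for a degenerate quadruple one must first prove that \emph{only one} of its four members can have empty private edge set — a nontrivial claim established via Propositions~\ref{prop:2cycleI}, \ref{prop:degenerate-2} and \ref{prop:degenerate-3} — and then that its 4-feasible cycle is simultaneously a 3-feasible cycle for the remaining triple, with the redundant path forced to be the base of one of that triple's three pairs (bases being unique). This yields at most $3\times(\text{number of feasible triples})\le 3n$ degenerate quadruples. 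Your substitute argument ("the extra path can be chosen in boundedly many ways because it must be consistent with the arc structure of $C$") does not deliver either ingredient; in particular, without the one-degenerate-member claim a quadruple could degenerate in two ways and your charging would not be well controlled. So the numerical conclusions are the right ones, but the proof as proposed is incomplete at precisely the points where the paper does its real work.
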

\begin{proof}
A feasible triple $\{f_i,f_j,f_k\}$ is called {\it degenerate},
if there exists some $\{\alpha,\beta,\gamma\}=\{i,j,k\}$ with $P_\alpha \backslash (f_\beta\cup f_\gamma )=\emptyset$.
By Propositions \ref{prop:2cycleI} and \ref{prop:degenerate-2}, $\{f_\beta, f_\gamma\}$ is type-\1 with base $f_\alpha$, where $\alpha<\min\{\beta, \gamma\}$.
Thus, each degenerate feasible triple consists of a unique type-\1 pair and its base.\footnote{Reversely, each type-\1 pair determines a degenerate feasible triple.}
Next we claim that for any two distinct non-degenerate feasible triples $\{f_i,f_j,f_k\}$ and $\{f_{i'},f_{j'},f_{k'}\}$,
their 3-feasible cycles $C(i,j,k)$ and $C(i',j',k')$ are distinct.
Suppose $C(i,j,k)=C(i',j',k')$.
Let $i<j<k$ and $i'<j'<k'$.
By symmetry let us assume $k\geq k'$.
If $k>k'$, then we have $E(P_k)\cap E(f_{i'}\cup f_{j'}\cup f_{k'})=\emptyset$,
which contradicts that $E(P_k)\cap E(C(i,j,k))\neq \emptyset$.
Thus it must be $k=k'$.
Now assume $j\geq j'$.
If $j>j'$, then $E(P_j)\cap E(f_{i'}\cup f_{j'})=\emptyset$,
implying that $C(i',j',k)$ does not contain edges in $P_j\bh f_k$.
But $C(i,j,k)$ does contain edges in $P_j\backslash f_k$, a contradiction.
Thus $j=j'$.
Finally we may assume $i\geq i'$.
If $i>i'$, then $E(P_i)\cap E(f_{i'})=\emptyset$ and thus $C(i',j,k)$ does not contain edges in $P_i\bh(f_k\cup f_j)$.
However, $C(i,j,k)$ does contain such edges.
This gives a contradiction that $\{i,j,k\}=\{i',j',k'\}$, proving the claim.
It is straightforward to see that each non-degenerate feasible triple and each degenerate feasible triple have different 3-feasible cycles.
Hence, each feasible triple contributes a unique 3-feasible cycle.
By \eqref{equ:(n-2)cycles}, we see that $\dF$ has at most $n$ feasible triples.

Similarly, a feasible quadruple $\{f_i,f_j,f_k,f_\ell\}$ is called {\it degenerate}
if there exists some $\{\alpha,\beta,\gamma, \theta\}=\{i,j,k,\ell\}$ with $P_\alpha \backslash (f_\beta\cup f_\gamma\cup f_\theta)=\emptyset$.
It is analogous to show that each non-degenerate feasible quadruple contributes a unique 4-feasible cycle.
So $\dF$ has at most $n$ non-degenerate feasible quadruples.

Now consider a degenerate feasible quadruple $\{f_i,f_j,f_k,f_\ell\}$ with $P_i\subseteq f_j\cup f_k\cup f_\ell$.

We claim that $P_\alpha \backslash (f_\beta\cup f_\gamma\cup f_i)\neq \emptyset$ for any $\{\alpha,\beta,\gamma\}=\{j,k,\ell\}$.
Suppose for a contradiction that $P_j\subseteq f_i\cup f_k\cup f_\ell$.
Since both of $f_i, f_j$ cannot be the base of $\{f_k,f_\ell\}$ at the same time,
by the symmetries between $f_i$ and $f_j$ and between $f_k$ and $f_\ell$,
using Proposition~\ref{prop:degenerate-3} we may assume that $f_i$ is the base of the type-\1 pair $\{f_j,f_k\}$.
We then see $i<\min \{j,k\}$ from Proposition~\ref{prop:2cycleI}, and this in turn implies that
$\{f_k,f_\ell\}$ is type-\1 with base $f_j$.
But then $\{f_j,f_k\}$ is not type-\1, a contradiction.

By Proposition~\ref{prop:degenerate-3}, we may assume that $P_i\subseteq f_j\cup f_k$ and $\{f_j, f_k\}$ is type-\1 with base $f_i$.
This yields $f_i\subseteq f_j\cup f_k$ and thus the 4-feasible cycle $C(i,j,k,\ell)$ is contained in $\{uv\}\cup f_j\cup f_k\cup f_\ell$.
Using the previous claim, $C(i,j,k,\ell)$ contains some edge in $P_\alpha\bh (f_\beta\cup f_\gamma\cup f_i)\subseteq P_\alpha\bh (f_\beta\cup f_\gamma)$ for any $\{\alpha, \beta, \gamma\}=\{j,k,\ell\}$.
Thus this cycle is also 3-feasible for $\{f_j,f_k,f_\ell\}$, which now is known to be a feasible triple.
Note that $f_i$ is the base of $\{f_j, f_k\}$.
So the number of degenerate feasible quadruples is at most three times the number of feasible triples, that is at most $3n$.
Hence in total there are at most $4n$ feasible quadruples in $\dF$.
This completes the proof.
\end{proof}

For any pair $\{i,j\}\subseteq \{0\}\cup [s]$, let $\dW_{ij}$ be the set of all paths $f_\ell\in \dF$ such that the triple $\{f_i,f_j,f_\ell\}$ is either feasible or contained in a feasible quadruple.
By Proposition~\ref{prop:feasible}, we see that
\begin{align}\label{equ:Wij}
\sum_{\mbox{all pairs } \{i,j\}} |\dW_{ij}|\leq  n \binom{3}{2}+4n \binom{4}{2}\cdot 2=51n.
\end{align}

\subsection{Proof of Lemma \ref{lem: type 1}}\label{Subsec:typeI}
To show Lemma \ref{lem: type 1}, we will first establish some properties on type-\1 pairs.
In this subsection, unless otherwise specified we assume $i<j$ and $\{f_i,f_j\}$ is a type-\1 pair in $\dF$ with base $f_k$.
Let $a\prec b\pq c\prec d$ be all splitting vertices in $f_i\cup f_j$.
For any $\ell\in [s]\bh \{i,j,k\}$, by Proposition~\ref{prop:degenerate-2} we have $P_\ell\not\subseteq f_i\cup f_j$.

Let $\dM_{ij}$ consist of paths $f_\ell\in \dF$ with $\ell\notin \{i,j,k\}$ such that $f_\ell\bh (f_i\cup f_j)$ is a path $x_\ell f_\ell y_\ell$ with $x_\ell\< y_\ell\pq c$ and $cf_\ell v=cf_iv$,
where either $x_\ell\pq a\< b\pq y_\ell \pq c$ or both $x_\ell, y_\ell$ lie in one of $af_ib$ and $af_jb$.
Let $\dN_{ij}$ consist of paths $f_\ell\in \dF$ with $\ell\notin \{i,j,k\}$ such that $f_\ell\bh (f_i\cup f_j)$ is a path $x_\ell f_\ell y_\ell$ with $b\pq x_\ell\< y_\ell$ and $uf_\ell b=uf_jb$,
where either $b\pq x_\ell\pq c\< d\pq y_\ell$ or both $x_\ell, y_\ell$ lie in one of $cf_id$ and $cf_jd$.

\begin{prop}\label{prop:non-feasible-triple-typeI}
Let $\ell\notin \{i,j,k\}$. If the triple $\{f_i,f_j,f_\ell\}$ is not feasible, then $f_\ell \in \dM_{ij}\cup \dN_{ij}$.
\end{prop}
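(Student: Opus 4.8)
The plan is to prove the contrapositive: assuming $\ell\notin\{i,j,k\}$ and $f_\ell\notin\dM_{ij}\cup\dN_{ij}$, I would exhibit a cycle in $\{uv\}\cup f_i\cup f_j\cup f_\ell$ showing that $\{f_i,f_j,f_\ell\}$ is feasible. First I would record the global picture coming from Proposition~\ref{prop:2cycleI}: the splitting vertices $a\<b\pq c\<d$ cut $f_i\cup f_j$ into seven pieces --- a common $(u,a)$-path, two internally disjoint $(a,b)$-arcs $af_ib\supseteq P_i$ and $af_jb$, a common $(b,c)$-path (a subpath of $P_k$), two internally disjoint $(c,d)$-arcs $cf_id$ and $cf_jd\supseteq P_j$, and a common $(d,v)$-path. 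Since $f_\ell\not\subseteq f_i\cup f_j$ (otherwise Proposition~\ref{prop:degenerate-2} would force $f_\ell$ to be the base of $\{f_i,f_j\}$, i.e.\ $f_\ell=f_k$, against $\ell\neq k$) and $P_\ell\not\subseteq f_i\cup f_j$ by Proposition~\ref{prop:P-notin-f}, the graph $f_\ell\bh(f_i\cup f_j)$ is a nonempty union of ``excursions'' --- maximal subpaths of $f_\ell$ all of whose inner vertices avoid $f_i\cup f_j$ --- at least one of which, say $xf_\ell y$ with $x\<y$ (by Proposition~\ref{prop:orient-path}), carries an edge of $P_\ell$.

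Next I would classify $f_\ell$ by the number of its excursions, by the location of the endpoints $x,y$ of this distinguished excursion among the seven pieces, and by whether $f_\ell$ follows $f_i$ or $f_j$ on the stretch up to $b$ and on the stretch from $c$ onward; Proposition~\ref{prop:orient-path} (the vertices of $f_\ell$ increase along $\<$) forces consecutive excursions to be $\<$-separated and thereby reduces the analysis to finitely many types. Handling the two halves of the theta in parallel --- the asymmetric look of $\dM_{ij}$ and $\dN_{ij}$ merely reflects that the base $f_k=uf_jc\cup cf_iv$ follows $f_j$ on the left and $f_i$ on the right --- I may assume the distinguished excursion lies ``on the left'', i.e.\ $x\pq c$, with the mirror case giving $\dN_{ij}$. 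I would then argue that in every type not appearing in the definition of $\dM_{ij}$ --- e.g.\ there is a second excursion meeting $P_\ell$, or the distinguished excursion straddles $c$, or it stays within one arc but $f_\ell$ follows $f_j$ rather than $f_i$ past $c$, and so on --- one can produce a $3$-feasible cycle.

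Each such cycle is obtained by \emph{rerouting}: one starts from one of the two cycles of $f_i\dif f_j$ (namely $af_ib\cup af_jb$ or $cf_id\cup cf_jd$), or from $\{uv\}\cup f_i$ or $\{uv\}\cup f_j$, and replaces a subpath by the portion of $f_\ell$ through the distinguished excursion. The mechanism making these cycles $3$-feasible is uniform: if an excursion endpoint lies strictly inside an arc, then the stretch of that arc between the adjacent splitting vertex and the endpoint is forced onto $f_\ell$ (since $f_\ell$ reaches the endpoint along that arc), so the part of $P_i$ or $P_j$ not covered by $f_\ell$ is pinned to one side of the theta, which the rerouted cycle is built to traverse; meanwhile the part of $P_j$ (resp.\ $P_i$) lying on the ``$f_\ell$-side'' of the theta gets absorbed into $f_\ell$, so that $P_\alpha\bh(f_\beta\cup f_\gamma)$ for that index is empty and need not be met. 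The main obstacle I anticipate is precisely this verification: checking, case by case, that the rerouted cycle meets every nonempty $P_\alpha\bh(f_\beta\cup f_\gamma)$ over $\{\alpha,\beta,\gamma\}=\{i,j,\ell\}$, and, dually, that in the surviving configurations every attempted rerouting is forced to miss one of these sets --- typically because some edge of $P_i$ sits strictly between the two excursion endpoints and is bypassed by $f_\ell$. Matching the surviving configurations against the definitions of $\dM_{ij}$ and $\dN_{ij}$ then finishes the proof.
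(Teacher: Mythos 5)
Your overall strategy is the same as the paper's: locate a distinguished subpath $xf_\ell y$ of $f_\ell\bh(f_i\cup f_j)$ carrying an edge of $P_\ell$, case on where $x,y$ sit relative to the decomposition of $f_i\cup f_j$ into $af_ib$, $cf_jd$ and $f_k$ given by Proposition~\ref{prop:2cycleI}, build 3-feasible cycles by rerouting in the bad configurations, and match the surviving configurations with $\dM_{ij}\cup\dN_{ij}$. But as written this is a plan, not a proof: the entire mathematical content of the proposition is precisely the case-by-case verification that you defer as ``the main obstacle I anticipate,'' and your proposed uniform mechanism is not strong enough to carry it out. Two concrete points. First, membership in $\dM_{ij}$ (resp.\ $\dN_{ij}$) requires that $f_\ell\bh(f_i\cup f_j)$ be a \emph{single} path and that $cf_\ell v=cf_iv$ (resp.\ $uf_\ell b=uf_jb$); establishing this in the surviving cases is not automatic. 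In the hardest case of the paper's proof -- both $x,y$ on $f_k$ with $a\pq x\< b$ -- one must rule out that $\{f_i,f_\ell\}$ or $\{f_k,f_\ell\}$ is type-\1, and ruling out the latter forces one to track a \emph{second} excursion $x'f_\ell y'$ of $f_\ell$ off $f_k$, use that $b,c$ are inner vertices of $P_k$ (Proposition~\ref{prop:2cycleI} again), and build a 3-feasible cycle through that second excursion, possibly interacting with $cf_jd$. This is not a rerouting through the distinguished excursion alone, so your ``uniform mechanism'' does not cover it. Second, the engine the paper actually uses to pin down $f_\ell$ outside the excursion is the tree structure: because $xf_\ell y$ contains an edge of $P_\ell$, the portions of $f_\ell$ before $x$ and after $y$ lie in $L$ and $R$, giving identities such as $uf_\ell x=uLx=uf_ix$ and $yf_\ell v=yRv=yf_iv$. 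Your substitute claim -- that an excursion endpoint strictly inside an arc forces the stretch of that arc back to the adjacent splitting vertex onto $f_\ell$ -- is unjustified as stated and is not how the containments are obtained.

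A smaller but telling slip: you invoke Proposition~\ref{prop:P-notin-f} for $P_\ell\not\subseteq f_i\cup f_j$, but that proposition only excludes $P_\ell\subseteq f_i$ and $P_\ell\subseteq f_j$ separately; the correct source is Proposition~\ref{prop:degenerate-2} (if $P_\ell\subseteq f_i\cup f_j$ then $\{f_i,f_j\}$ would be type-\1 with base $f_\ell\ne f_k$). Relatedly, your ``absorption'' argument -- that some $P_\alpha\bh(f_\beta\cup f_\gamma)$ becomes empty and so need not be met -- cannot occur in this setting: the paper's first observation is that all three sets $P_\alpha\bh(f_\beta\cup f_\gamma)$, $\{\alpha,\beta,\gamma\}=\{i,j,\ell\}$, are nonempty (again via Proposition~\ref{prop:degenerate-2} and the fact that the base of $\{f_i,f_j\}$ is $f_k$), and this nonemptiness is exactly what every rerouted cycle must be checked against. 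So the gap is genuine: the approach is right, but the case analysis that constitutes the proof, including its most delicate branches, is missing.
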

\begin{proof}
Fix $\ell\notin \{i,j,k\}$ such that $\{f_i,f_j,f_\ell\}$ is a non-feasible triple.
We point out that by Proposition~\ref{prop:degenerate-2} and the fact that $\{f_i, f_j\}$ is type-\1,
$P_\alpha\bh (f_\beta\cup f_\gamma)\neq \emptyset$ for any $\{\alpha, \beta, \gamma\}=\{i, j, \ell\}$.
Take any subpath $xf_\ell y$ in $f_\ell\bh (f_i\cup f_j)$ which contains some edge of $P_\ell$, where we assume $x\<y$.
By Proposition~\ref{prop:2cycleI}, since $i<j$, we see that $P_i\subseteq af_ib$, $P_j\subseteq cf_jd$,
and $f_i\cup f_j$ can be partitioned into edge-disjoint paths $af_ib, cf_jd$ and $f_k$.
We will proceed by considering whether $x, y$ lie in $af_ib, cf_jd$ or $f_k$.
In the coming proof, we will make use of the symmetry between $x$ and $y$ and the symmetry between $f_i$ and $f_j$.

\begin{figure}[ht!]\label{fig:A}
\begin{minipage}{0.33 \linewidth}
        \centering
\begin{tikzpicture}[scale=0.7,line cap=round,line join=round,>=triangle 45,x=1cm,y=1cm]
\clip(-0.75,-0.5) rectangle (7.75,2.5);
\draw [line width=1pt] (0,0)-- (7,0);
\draw [shift={(2,0)},line width=1pt,color=red]  plot[domain=0:3.141592653589793,variable=\t]({1*1*cos(\t r)+0*1*sin(\t r)},{0*1*cos(\t r)+1*1*sin(\t r)});
\draw [shift={(5,0)},line width=1pt,color=blue]  plot[domain=0:3.141592653589793,variable=\t]({1*1*cos(\t r)+0*1*sin(\t r)},{0*1*cos(\t r)+1*1*sin(\t r)});
\draw[line width=0.5 pt] (2,1) to[out=40,in=140] (5,1);

\begin{scriptsize}
\draw [fill=black] (2,1) circle (1.5pt);
\draw[color=black] (1.8 ,1.25) node {$x$};
\draw [fill=black] (5,1) circle (1.5 pt);
\draw[color=black] (5.2 ,1.25) node {$y$};
\draw[color=black] (3.5 ,2) node {$f_{\ell}$};

\draw [fill=black] (0,0) circle (1.5pt);
\draw[color=black] (0 ,-0.25) node {$u$};
\draw [fill=black] (7,0) circle (1.5 pt);
\draw[color=black] (7 ,-0.25) node {$v$};
\draw[color=black] (2, -0.3) node {$f_k$};
\draw [fill=black] (1,0) circle (1.5pt);
\draw[color=black] (1 ,-0.25) node {$a$};
\draw [fill=black] (3,0) circle (1.5pt);
\draw[color=black] (3 ,-0.25) node {$b$};
\draw[color=black] (3 ,0.8 ) node {$f_i$};
\draw [fill=black] (4,0) circle (1.5pt);
\draw[color=black] (4 ,-0.25) node {$c$};
\draw [fill=black] (6,0) circle (1.5pt);
\draw[color=black] (6 ,-0.25) node {$d$};
\draw[color=black] (6 ,0.8) node {$f_j$};
\draw[color=black] (6.5, 0.3) node {$f_k$};
\end{scriptsize}
\end{tikzpicture}
\caption*{(a)}
\end{minipage}
\begin{minipage}{0.33 \linewidth}
        \centering
\begin{tikzpicture}[scale=0.7,line cap=round,line join=round,>=triangle 45,x=1cm,y=1cm]
\clip(-0.75,-0.5) rectangle (7.75,2.5);
\draw [line width=1pt] (0,0)-- (7,0);
\draw [shift={(2,0)},line width=1pt,color=red]  plot[domain=0:3.141592653589793,variable=\t]({1*1*cos(\t r)+0*1*sin(\t r)},{0*1*cos(\t r)+1*1*sin(\t r)});
\draw [shift={(5,0)},line width=1pt,color=blue]  plot[domain=0:3.141592653589793,variable=\t]({1*1*cos(\t r)+0*1*sin(\t r)},{0*1*cos(\t r)+1*1*sin(\t r)});
\draw [line width=0.5pt] (0.5,0 ) to[out=90,in=160]  (2,1);

\begin{scriptsize}
\draw [fill=black] (2,1) circle (1.5pt);
\draw[color=black] (2.2 ,1.25) node {$x'$};
\draw [fill=black] (0.5,0 ) circle (1.5 pt);
\draw[color=black] (0.5,-0.25 ) node {$y'$};
\draw[color=black] (0.5 ,0.9) node {$f_{\ell}$};

\draw [fill=black] (0,0) circle (1.5pt);
\draw[color=black] (0 ,-0.25) node {$u$};
\draw [fill=black] (7,0) circle (1.5 pt);
\draw[color=black] (7 ,-0.25) node {$v$};
\draw [fill=black] (1,0) circle (1.5pt);
\draw[color=black] (1 ,-0.25) node {$a$};
\draw [fill=black] (3,0) circle (1.5pt);
\draw[color=black] (3 ,-0.25) node {$b$};
\draw[color=black] (3 ,0.8 ) node {$f_i$};
\draw [fill=black] (4,0) circle (1.5pt);
\draw[color=black] (4 ,-0.25) node {$c$};
\draw [fill=black] (6,0) circle (1.5pt);
\draw[color=black] (6 ,-0.25) node {$d$};
\draw[color=black] (6  ,0.8) node {$f_j$};
\draw[color=black] (6.5, 0.3) node {$f_k$};
\end{scriptsize}
\end{tikzpicture}
\caption*{(b)}
\end{minipage}
\begin{minipage}{0.33 \linewidth}
        \centering
\begin{tikzpicture}[scale=0.7,line cap=round,line join=round,>=triangle 45,x=1cm,y=1cm]
\clip(-0.75,-0.5) rectangle (7.75,2.5);
\draw [line width=1pt] (0,0)-- (7,0);
\draw [shift={(2,0)},line width=1pt,color=red]  plot[domain=0:3.141592653589793,variable=\t]({1*1*cos(\t r)+0*1*sin(\t r)},{0*1*cos(\t r)+1*1*sin(\t r)});
\draw [shift={(5,0)},line width=1pt,color=blue]  plot[domain=0:3.141592653589793,variable=\t]({1*1*cos(\t r)+0*1*sin(\t r)},{0*1*cos(\t r)+1*1*sin(\t r)});
\draw[line width=0.5pt] (1.4,0.8).. controls (1.5,1.5) and  ( 2.5,1.5) .. (2.6,0.8);
\begin{scriptsize}
\draw [fill=black] (1.4,0.8) circle (1.5pt);
\draw [fill=black] (1.2,1 ) node {$x$};
\draw [fill=black] (2.6,0.8) circle (1.5pt);
\draw[color=black] (2.8,1) node {$y$};
\draw[color=black] (2 ,1.8) node {$f_{\ell}$};

\draw [fill=black] (0,0) circle (1.5pt);
\draw[color=black] (0 ,-0.25) node {$u$};
\draw [fill=black] (7,0) circle (1.5 pt);
\draw[color=black] (7 ,-0.25) node {$v$};
\draw [fill=black] (1,0) circle (1.5pt);
\draw[color=black] (1 ,-0.25) node {$a$};
\draw [fill=black] (3,0) circle (1.5pt);
\draw[color=black] (3 ,-0.25) node {$b$};
\draw[color=black] (2,0.7) node {$f_i$};
\draw [fill=black] (4,0) circle (1.5pt);
\draw[color=black] (4 ,-0.25) node {$c$};
\draw [fill=black] (6,0) circle (1.5pt);
\draw[color=black] (6 ,-0.25) node {$d$};
\draw[color=black] (5 ,1.3) node {$f_j$};
\draw[color=black] (6.5, 0.3) node {$f_k$};
\end{scriptsize}
\end{tikzpicture}
\caption*{(c)}
\end{minipage}
\caption*{Figure 2}
\end{figure}
First, suppose that $x,y$ are not both in $af_ib$, $cf_jd$ or $f_k$.
If $x$ lies in $af_ib$ and $y$ lies in $cf_jd$ (see Figure 2-a),
then as $P_i\subseteq af_ib$ and $P_j\subseteq cf_jd$, one can see that $\{uv\}\cup f_i\cup f_j\cup xf_\ell y$ always contains a 3-feasible cycle $C(i,j,\ell)$ and thus  $\{f_i,f_j,f_\ell\}$ is feasible, a contradiction.
By symmetry between $af_ib$ and $cf_jd$, it suffices to consider the case that one of $\{x,y\}$ (say $x'$) lies in $af_ib\bh\{a,b\}$ and the other $y'$ lies in $f_k\bh\{a,b\}$  (see Figure 2-b),
where $\{x,y\}=\{x',y'\}$ and $y'$ may lies in any of $uf_ia, af_jb, bf_ic, cf_id$ and $df_iv$.
However, it is not hard to see that in any possible location of $y'$, one can always find a 3-feasible cycle $C(i,j,\ell)$ in $\{uv\}\cup f_i\cup f_j\cup xf_\ell y$, a contradiction.
For example, when $y'\in uf_ia\bh\{a\}$, if $E(P_i)\cap af_ix'\neq \emptyset$, then $y'f_\ell x'\cup x'f_ia\cup af_jv\cup \{vu\}\cup uf_iy'$ forms a 3-feasible cycle for $\{f_i,f_j,f_\ell\}$;
otherwise $E(P_i)\cap x'f_ib\neq \emptyset$, then $y'f_\ell x'\cup x'f_ib\cup bf_jv\cup \{vu\}\cup uf_iy'$ forms a 3-feasible cycle for $\{f_i,f_j,f_\ell\}$.

Next, suppose that both $x,y$ lie in one of the paths $af_ib$ and $cf_jd$.
We first assume that $x,y$ lie in $af_ib$ (see Figure 2-c).
Then $f_i\dif f_\ell$ contains the cycle $xf_\ell y\cup xf_iy$.
If $\{f_i, f_\ell\}$ is type-\1, then by Proposition~\ref{prop:2cycleI}, we see $E(xf_iy)\cap E(P_i)=\emptyset$,
which implies that $P_i\subseteq af_ix$ or $P_i\subseteq xf_ib$.
In this case one can easily find a 3-feasible cycle for $\{f_i,f_j,f_\ell\}$, a contradiction.
Therefore, $\{f_i, f_\ell\}$ is not type-\1 and $f_i\dif f_\ell$ consists of the cycle $xf_\ell y\cup xf_iy$.
This shows that $f_\ell=(f_i\bh xf_iy)\cup xf_\ell y \in \dM_{ij}$.
By symmetry, if $x,y$ lie in $cf_jd$, then one can show that $f_\ell\in \dN_{ij}$.

It remains to consider that both $x,y$ lie in $f_k$. Recall that $x\<y$.
We discuss according to the location of $x$.
Assume that $x\<a$. Then we have $b\pq y\pq c$ (as otherwise, one can always find a 3-feasible cycle in $\{uv\} \cup f_i\cup f_j\cup xf_\ell y$ for $\{f_i, f_j, f_\ell\}$, a contradiction).
In this case, since $xf_\ell y$ contains some edge of $P_\ell$, we see that $uf_\ell x=uLx=uf_ix$ and $yf_\ell v=yRv=yf_iv$, implying that $f_\ell\in \dM_{ij}$.

Next assume that $a\pq x\< b$.
We can derive that if $x=a$ then $a\<y\pq c$, and if $a\<x\< b$ then $a\<x\<y\pq b$
(as otherwise there is a 3-feasible cycle for $\{f_i, f_j, f_\ell\}$).
In both cases, we see that the first endpoint of $P_\ell$ precedes $c$ in the linear order $\<$.
Suppose $(f_i,f_\ell)$ is type-\1. Since $c$ is a vertex in $f_i$ with $P_i\<c$, by Proposition~\ref{prop:2cycleI}, there exists a $(u,c)$-path containing $P_i$ and $P_\ell$, whose vertices increase in $\<$ as traversing from $u$ to $c$.
Then this path can be easily extended to a cycle containing $cf_jd\supseteq P_j$, which is 3-feasible for $\{f_i, f_j, f_\ell\}$, a contradiction.
Hence $(f_i,f_\ell)$ is not type-\1.
Suppose that $(f_k,f_\ell)$ is type-\1.
Then $f_\ell\bh f_k$ consists of two paths $xf_\ell y$ and $x'f_\ell y'$.
Since $b,c$ are inner vertices of $P_k$, Proposition~\ref{prop:2cycleI} shows that $y\pq x'\< b\pq c\< y'$, where $x'$ lies in $yf_kb\backslash\{b\}$ and $y'$ lies in $cf_kv\backslash\{c\}$, and $xf_\ell y'$ is internally disjoint from $af_iy'$.
If $x'f_\ell y'$ is internally disjoint from $cf_jd$,
then we can find 3-feasible cycle $af_jx\cup xf_\ell y'\cup y'f_id\cup df_jc\cup cf_ia$ for $\{f_i, f_j, f_\ell\}$, a contradiction.
So $x'f_\ell y'$ intersects with $cf_jd$.
Let $x,y,x',z$ be all splitting vertices in $f_j\cup f_\ell$, where $z\in V(cf_jd)\bh \{c,d\}$.
Then we have $d=y'$ and $x'f_\ell y'=x'f_\ell z\cup zf_jd$.
This shows that $(f_j,f_\ell)$ is type-\1 and by Proposition~\ref{prop:2cycleI}, $P_j\subseteq cf_jz$.
Then $af_ic\cup cf_jz\cup zf_\ell a$ is a 3-feasible cycle for $\{f_i, f_j, f_\ell\}$, a contradiction.
Summarizing, we have that both $(f_i,f_\ell)$ and $(f_k,f_\ell)$ are not type-\1.
Then in either case of $a\pq x\<y\pq b$ and $x=a\<b\<y\pq c$,
we see that $xf_\ell y$ is the unique path in $f_\ell\bh (f_i\cup f_j)$ and $cf_\ell v=cf_iv$, implying that $f_\ell\in \dM_{ij}$.

Putting the above together, we infer $b\pq x$.
By the symmetry between $x$ and $y$, one can also infer that $y\pq c$.
That is $b\pq x\<y\pq c$. Then $uf_ix\cup xf_\ell y\cup yf_jv\cup vu$ forms a 3-feasible cycle for $\{f_i, f_j, f_\ell\}$.
This final contradiction completes the proof of Proposition~\ref{prop:non-feasible-triple-typeI}.
\end{proof}

Recall the definition of $\dW_{ij}$ in the end of Subsection~\ref{subsec:feasible-cycles}.

\begin{prop}\label{prop:MN-typeI}
Let $f_p\in \dM_{ij}\bh \dW_{ij}$ and $f_q\in \dN_{ij}\bh \dW_{ij}$.
Then the paths $x_pf_py_p$ and $x_qf_{q}y_q$ are internally disjoint, where $x_p\<y_p\pq x_q\<y_q$.
\end{prop}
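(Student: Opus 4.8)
\section*{Proof proposal for Proposition~\ref{prop:MN-typeI}}

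Since $f_p\in\dM_{ij}$ and $f_q\in\dN_{ij}$ already record $x_p\<y_p$ and $x_q\<y_q$, and since by Proposition~\ref{prop:orient-path} the vertices of $x_pf_py_p$ (resp. $x_qf_qy_q$) increase in $\<$, so that its inner vertices lie strictly between $x_p$ and $y_p$ (resp. between $x_q$ and $y_q$) in $\<$, the whole statement follows once we prove the single inequality $y_p\pq x_q$: indeed this puts the inner vertices of the two paths into the disjoint $\<$-intervals $(x_p,y_p)$ and $(x_q,y_q)$, which gives internal disjointness for free. So the plan is to assume $x_q\<y_p$ and derive a contradiction by producing a feasible quadruple containing $f_p$, contradicting $f_p\in\dM_{ij}\bh\dW_{ij}$.

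Assume $x_q\<y_p$. From $y_p\pq c$ (definition of $\dM_{ij}$) and $b\pq x_q$ (definition of $\dN_{ij}$) we get $b\pq x_q\<y_p\pq c$, so in particular $b\<c$, $y_p\s b$ and $x_q\<c$. Since $y_p\s b$, the second alternative in the definition of $\dM_{ij}$ (which would force $y_p\pq b$) is impossible; hence $x_p\pq a$, $b\pq y_p\pq c$ and $cf_pv=cf_iv$. Symmetrically $x_q\<c$ rules out the second alternative in the definition of $\dN_{ij}$, so $b\pq x_q\pq c$, $c\<d\pq y_q$ and $uf_qb=uf_jb$. Combining these with Proposition~\ref{prop:2cycleI} ($P_i\subseteq af_ib$, $P_j\subseteq cf_jd$, and $f_i\cup f_j$ is the edge‑disjoint union of $af_ib$, $cf_jd$ and $f_k$), and with the fact that every edge of $f_p$ (resp. $f_q$) lying outside $f_i\cup f_j$ lies on $x_pf_py_p$ (resp. $x_qf_qy_q$), one reads off
$$f_p=uf_ix_p\cup x_pf_py_p\cup y_pf_ic\cup cf_id\cup df_iv,\qquad f_q=uf_jb\cup bf_jx_q\cup x_qf_qy_q\cup y_qf_jv,$$
where $uf_ix_p$, $y_pf_ic$, $bf_jx_q$, $y_qf_jv$ are sub‑segments of the shared segments $uf_ia=uf_ja$, $bf_ic=bf_jc$, $bf_ic$, $df_iv=df_jv$ respectively.

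Now, because $x_p\<x_q\<y_p\<y_q$, the two paths $x_pf_py_p$ and $x_qf_qy_q$ ``cross'', and I would splice them together with $P_i$, $P_j$ and four shared sub‑segments into the closed walk
$$C=af_ib\cup bf_ix_q\cup x_qf_qy_q\cup y_qf_id\cup df_jc\cup cf_iy_p\cup y_pf_px_p\cup x_pf_ia,$$
i.e. go from $a$ to $b$ along $P_i$, back to $x_q$, across to $y_q$ along $f_q$, back to $d$, back to $c$ along $P_j$, back to $y_p$, across to $x_p$ along $f_p$, and from $x_p$ back to $a$; every piece lies in $f_i\cup f_j\cup f_p\cup f_q$. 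Granting that $C$ is a genuine cycle, it is $4$‑feasible for $\{f_i,f_j,f_p,f_q\}$: it contains all of $P_i$ and all of $P_j$, and none of the three other paths uses an edge of $P_i$ or of $P_j$ except possibly the ``new'' edges on $x_pf_py_p,x_qf_qy_q$, so the conditions for $\alpha=i$ and $\alpha=j$ (where $P_i\bh(f_j\cup f_p\cup f_q)=P_i$ and $P_j\bh(f_i\cup f_p\cup f_q)=P_j$) hold; and since every edge of $P_p\bh(f_i\cup f_j\cup f_q)$ lies on $x_pf_py_p\subseteq C$ and every edge of $P_q\bh(f_i\cup f_j\cup f_p)$ lies on $x_qf_qy_q\subseteq C$, the conditions for $\alpha=p$ and $\alpha=q$ hold as well. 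Thus $\{f_i,f_j,f_p,f_q\}$ is feasible, whence $f_p\in\dW_{ij}$, a contradiction.

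The step that needs real work, and which I expect to be the main obstacle, is verifying that $C$ really is a simple cycle. Its eight pieces are internally disjoint paths whose $\<$-ranges are $[a,b],[b,x_q],[x_q,y_q],[d,y_q],[c,d],[y_p,c],[x_p,y_p],[x_p,a]$, consecutive pieces meet only at the listed common endpoint, and the only way simplicity can fail is if an inner vertex of $x_pf_py_p$ or of $x_qf_qy_q$ coincides with an inner vertex of $P_i$ or $P_j$, with a vertex of a shared segment outside the relevant $\<$-interval, or with an inner vertex of the other of these two paths. Ruling these out is where one again uses that $x_pf_py_p$ and $x_qf_qy_q$ consist solely of edges outside $f_i\cup f_j$ together with the linear‑order and spanning‑tree structure of Section~2, exactly in the spirit of the proof of Proposition~\ref{prop:non-feasible-triple-typeI}; the few degenerate configurations that survive (essentially those in which the two paths share an inner vertex $w$, with $x_q\<w\<y_p$) are handled by cutting at $w$ and re‑splicing $C$ through $x_qf_qw$ and $wf_py_p$, then checking $4$‑feasibility in the same way. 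I expect this bookkeeping to be the bulk of the argument, with the crossing‑cycle construction above as its conceptual core.
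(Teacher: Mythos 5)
Your overall route is the same as the paper's: assume $x_q\<y_p$, pin down the configuration $x_p\pq a\<b\pq x_q\<y_p\pq c\<d\pq y_q$ via the two alternatives in the definitions of $\dM_{ij}$ and $\dN_{ij}$, and contradict $f_p\notin\dW_{ij}$ by exhibiting a $4$-feasible cycle for $\{f_i,f_j,f_p,f_q\}$. Your explicit eight-piece crossing cycle is a correct instantiation of this in the case where $x_pf_py_p$ and $x_qf_qy_q$ are internally disjoint (the paper leaves that cycle implicit), and the simplicity bookkeeping you defer is lighter than you fear: since $x_pf_py_p=f_p\bh(f_i\cup f_j)$ while the rest of $f_p$ runs inside $f_i\cup f_j$, an inner vertex of $x_pf_py_p$ lying on $f_i$ (or on $f_j$) would be an extra splitting vertex of $f_i\cup f_p$ (or $f_j\cup f_p$), which is incompatible with the cycle structure of the symmetric difference forced by Propositions~\ref{prop:orient-path} and~\ref{prop:diff1} together with $f_p\bh f_i=x_pf_py_p$ (resp.\ $f_p\bh f_j=x_pf_py_p\cup cf_id$). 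So the only way your $C$ can fail to be a cycle is a common inner vertex of the two new segments.

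That degenerate case is exactly where your proposal has a genuine gap, and it is the case the paper treats first. If $x_pf_py_p$ and $x_qf_qy_q$ share an inner vertex $w$, then $\{f_p,f_q\}$ is a type-\1 pair, and Proposition~\ref{prop:2cycleI} determines on which sides of $w$ the ears sit: exactly one of the two crossing splices $x_pf_pw\cup wf_qy_q$ and $x_qf_qw\cup wf_py_p$ contains an edge of $P_p$ and an edge of $P_q$, and which one it is depends on whether $p<q$ or $q<p$. Your repair uses only the splice $x_qf_qw\cup wf_py_p$. When $p<q$, Proposition~\ref{prop:2cycleI} puts $P_p$ on the discarded half $x_pf_pw$ (and $P_q$ towards $y_q$); since $P_p$ is edge-disjoint from $f_i\cup f_j$, your re-spliced cycle then contains no edge of $P_p\bh(f_i\cup f_j\cup f_q)$ (and likewise misses $P_q\bh(f_i\cup f_j\cup f_p)$), so $4$-feasibility fails for $\alpha=p$ and no contradiction is obtained; ``checking $4$-feasibility in the same way'' does not go through. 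The paper's proof invokes exactly this dichotomy to choose the correct splice and reroutes the remainder of the cycle accordingly (for the splice from $x_p$ to $y_q$ one returns through $y_qf_id\cup df_jc\cup cf_ib\cup bf_ia\cup af_ix_p$). With that correction your argument coincides with the paper's; as written, the shared-inner-vertex case is not handled.
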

\begin{proof}
Suppose for a contradiction that $x_pf_py_p$ and $x_qf_{q}y_q$ share a common inner vertex $z$.
If $y_p\pq x_q$, then we have $x_p\< z \< y_p\pq x_q\< z\< y_q$, a contradiction.
So we have $x_q\<y_p$, which implies that $x_p\pq a\< b\pq x_q\<z\<y_p\pq c\< d\pq y_q$.
We see $\{f_p,f_q\}$ is type-\1.
By Proposition~\ref{prop:2cycleI},
one of $x_pf_pz\cup zf_{q}y_q$ and $x_qf_{q}z\cup  zf_py_p$ contains some edge in $P_p$ and in $P_q$, respectively.
In either case, we can find a 4-feasible cycle for $\{f_i,f_j,f_p,f_q\}$ and thus $\{f_i,f_j,f_p\}$ is contained in a feasible quadruple,
which shows that $f_p\in \dW_{ij}$, a contradiction.
Hence the paths $x_pf_py_p$ and $x_qf_{q}y_q$ are internally disjoint.
Suppose that $x_q\<y_p$. Then we have $x_p\pq a\< b\pq x_q\<y_p\pq c\< d\pq y_q$.
In this case, one can derive the same contradiction by finding a 4-feasible cycle for $\{f_i,f_j,f_p,f_q\}$.
This shows that $y_p\pq x_q$.
\end{proof}

\begin{prop}\label{prop:typeI-iterate}
Let $\dS$ be any subset of $\dF$ with separator $\{x,y\}$ such that $|\dS|\geq s-\frac{30\sqrt{n}}{\log n}$ and $x\<y$.
Assume that there do not exist two disjoint subsets $\dS_1$ and $\dS_2$ of $\dS$ such that $|\dS_1|+|\dS_2|\geq |\dS|-\frac{52\sqrt{n}}{\log^2 n}$,
$G(\dS_1)$ and $G(\dS_2)$ are edge-disjoint, and each $\dS_i$ contains at most $\sqrt{n}\log^2 n$ type-\1 pairs for $i\in [2]$.\footnote{Here it is possible that one $\dS_i$ is an empty set (if so, $G(\dS_i)$ is an empty graph).}

Then there exists $\dS'\subseteq \dS$ with separator $\{x',y'\}$ such that $|\dS'|\geq |\dS|-\frac{53\sqrt{n}}{\log^2 n}$, $x\pq x'\<y'\pq y$, and
each $(x',y')$-path in $G(\dS')$ can be extended to two distinct $(x,y)$-paths in $G(\dS)$.
\end{prop}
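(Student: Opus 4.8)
I would feed the failure of the bipartition into the type-\1 machinery: isolate a single type-\1 pair $\{f_i,f_j\}$ whose ``forbidden set'' $\dW_{ij}$ is small, show that almost all of $\dS$ lies on one of the two sides $\dM_{ij}\cap\dS$, $\dN_{ij}\cap\dS$ of that pair (otherwise a forbidden bipartition appears), and then take $\dS'$ to be that large side. Its paths all agree with $f_i$ (resp.\ $f_j$) on a long terminal (resp.\ initial) stretch, so its separator is genuinely smaller than $\{x,y\}$; and the part of $[x,y]$ freed up contains one of the two cycles of $f_i\dif f_j$, which supplies the two distinct extensions.

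\textbf{Isolating a small type-\1 pair.} First, $\dS$ must contain more than $\sqrt n\log^2 n$ type-\1 pairs, since otherwise $(\dS_1,\dS_2)=(\dS,\emptyset)$ would be a forbidden bipartition. As $\sum_{\{i,j\}}|\dW_{ij}|\le 51n$ by \eqref{equ:Wij}, averaging over the type-\1 pairs of $\dS$ yields a type-\1 pair $\{f_i,f_j\}$ of $\dS$ (say $i<j$, with base $f_k$ and splitting vertices $a\prec b\pq c\prec d$ of $f_i\cup f_j$) with $|\dW_{ij}|\le 51\sqrt n/\log^2 n$. Since $f_i,f_j\in\dS$, the two cycles of $f_i\dif f_j$ lie in $G(\dS)\subseteq\bigcup_{f_\ell\in\dS}xf_\ell y$, so $x\pq a\prec b\pq c\prec d\pq y$; this is the slack that will let the separator shrink. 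By Proposition~\ref{prop:non-feasible-triple-typeI}, $\dS\bh(\dW_{ij}\cup\{f_i,f_j,f_k\})\subseteq\dM_{ij}\cup\dN_{ij}$.

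\textbf{One side is small.} I claim $\min\{|\dM_{ij}\cap\dS|,|\dN_{ij}\cap\dS|\}\le\sqrt n/\log^2 n$; using the symmetry interchanging $(f_i,\dM_{ij})$ with $(f_j,\dN_{ij})$ (and ``left'' with ``right''), we may then assume $|\dN_{ij}\cap\dS|\le\sqrt n/\log^2 n$. To prove the claim, suppose both sides exceed $\sqrt n/\log^2 n$ and set $\dS_1=(\dM_{ij}\cap\dS)\bh\dW_{ij}$ and $\dS_2=(\dN_{ij}\cap\dS)\bh\dW_{ij}\bh\dM_{ij}$. These are disjoint and, by the previous paragraph, cover $\dS\bh(\dW_{ij}\cup\{f_i,f_j,f_k\})$, so $|\dS_1|+|\dS_2|\ge|\dS|-51\sqrt n/\log^2 n-3\ge|\dS|-52\sqrt n/\log^2 n$. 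For edge-disjointness, Proposition~\ref{prop:MN-typeI} gives $y_p\pq x_q$ for all $f_p\in\dS_1$, $f_q\in\dS_2$; fixing $m$ with $\max_p y_p\pq m\pq\min_q x_q$ one checks that every path of $\dS_1$ coincides with $f_i$ from $m$ to $v$ while every path of $\dS_2$ coincides with $f_j$ from $u$ to $m$, so $G(\dS_1)$ and $G(\dS_2)$ lie on opposite sides of $m$ and share no edge. Finally one must bound the number of type-\1 pairs inside each $\dS_t$: given a type-\1 pair $\{f_p,f_q\}$ inside $\dS_1$, one routes inside $\{uv\}\cup f_i\cup f_j\cup f_p\cup f_q$ a $4$-feasible cycle using Proposition~\ref{prop:2cycleI} — just as in the proof of Proposition~\ref{prop:MN-typeI} — which places one of $f_p,f_q$ in $\dW_{ij}$; hence every type-\1 pair of $\dS_1$ meets $\dW_{ij}$, and there are few of them by the injection of feasible triples into cycles (Proposition~\ref{prop:feasible}, \eqref{equ:(n-2)cycles}). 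Thus $(\dS_1,\dS_2)$ is a forbidden bipartition, a contradiction. I expect this last point — the type-\1 count inside $\dS_1$, together with the careful bookkeeping of the interval $[b,c]$ that enters the edge-disjointness check — to be the main obstacle, and the part needing the most care.

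\textbf{Building $\dS'$ and doubling.} Put $\dS'=(\dM_{ij}\cap\dS)\bh\dW_{ij}$. Since $\dS\bh\dS'\subseteq(\dN_{ij}\cap\dS)\cup\{f_i,f_j,f_k\}\cup\dW_{ij}$, the previous step gives $|\dS'|\ge|\dS|-\sqrt n/\log^2 n-3-51\sqrt n/\log^2 n\ge|\dS|-53\sqrt n/\log^2 n$; in particular (using $|\dS|\ge s-30\sqrt n/\log n$ and $s\ge(1+o(1))\sqrt n$) $\dS'$ is large, so it contains distinct paths. All of them agree with $f_i$ on the segment from $c$ to $v$, so the separator of $\dS'$ has the form $\{x',c\}$, and $x\pq x'\prec c\pq y$ (one cannot have $x'=c$, else $\dS'=\{f_i\}$); taking $y'=c$ this has the required form. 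For the doubling, let $P$ be an $(x',c)$-path in $G(\dS')$. Because $c\prec d\pq y$, the paths $cf_id$ and $cf_jd$ (which together form the cycle of $f_i\dif f_j$ avoided by $\dM_{ij}$) are internally disjoint $(c,d)$-paths lying in $G(\dS)$, and $df_iy$ is a $(d,y)$-subpath of $f_i$; prepending to $P$ the common $(x,x')$-part shared by all paths of $\dS'$ and appending $cf_id\cup df_iy$ in one case and $cf_jd\cup df_iy$ in the other yields two distinct $(x,y)$-paths in $G(\dS)$ that extend $P$. (In the symmetric case one instead takes $\dS'=(\dN_{ij}\cap\dS)\bh\dW_{ij}$, whose separator is $\{b,y'\}$, and doubles using the cycle $af_ib\cup af_jb$ in the region to the left of $b$.) This proves the proposition.
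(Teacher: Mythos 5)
Your skeleton is essentially the paper's (isolate a type-\1 pair with $|\dW_{ij}|\le 51\sqrt n/\log^2 n$, split the rest of $\dS$ into the $\dM_{ij}$- and $\dN_{ij}$-sides, show one side must be tiny or else a forbidden bipartition appears, take the large side as $\dS'$ and double through the freed cycle of $f_i\dif f_j$), but the step you yourself flagged as the main obstacle contains a genuine gap. You claim that any type-\1 pair $\{f_p,f_q\}$ inside $\dS_1\subseteq(\dM_{ij}\cap\dS)\bh\dW_{ij}$ admits a $4$-feasible cycle inside $\{uv\}\cup f_i\cup f_j\cup f_p\cup f_q$, forcing one of $f_p,f_q$ into $\dW_{ij}$. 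That is false (and if it were true it would prove the too-strong statement that $\dS_1$ has no type-\1 pairs at all, since $\dS_1\cap\dW_{ij}=\emptyset$). Concretely, take $f_p,f_q\in\dM_{ij}$ whose deviations $x_pf_py_p$ and $x_qf_qy_q$ both attach to $af_jb$, on disjoint subintervals of $af_jb$; the definition of $\dM_{ij}$ explicitly allows this. Then $f_p\dif f_q$ is the pair of cycles $x_pf_py_p\cup x_pf_jy_p$ and $x_qf_qy_q\cup x_qf_jy_q$, so $\{f_p,f_q\}$ is type-\1, yet in $H=\{uv\}\cup f_i\cup f_j\cup f_p\cup f_q$ no $4$-feasible cycle exists: a $4$-feasible cycle would have to contain an edge of $P_i\subseteq af_ib$ and an edge of $P_j\subseteq cf_jd$ (both sets are untouched by the other paths here, so by the degree-$2$ interiors the cycle must contain all of $af_ib$ and all of $cf_jd$) as well as edges of both deviations; but the interior of $af_jb$ together with the two deviations meets the rest of $H$ only at $a$ and $b$, so a cycle through a deviation crosses that region by a single $(a,b)$-path and can then close up either through $af_ib$ or through $bf_ic\cup cf_jd\cup df_iv\cup\{vu\}\cup uf_ia$, never both. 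So nothing places $f_p$ or $f_q$ in $\dW_{ij}$, and your forbidden bipartition is not established.

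The paper avoids any absolute bound on type-\1 pairs within one side; the correct argument, which is available inside your contradiction hypothesis (both sides large), is a cross-side count: for a type-\1 pair $\{f_\alpha,f_\beta\}$ in $\dS_1$ and any $f_\gamma\in\dS_2$, all splitting vertices of $f_\alpha\cup f_\beta$ precede the cut vertex $z\in bf_ic$ supplied by Proposition~\ref{prop:MN-typeI}, while the deviation of $f_\gamma$ lies beyond $z$, so $f_\gamma\notin\dM_{\alpha\beta}\cup\dN_{\alpha\beta}$ and Proposition~\ref{prop:non-feasible-triple-typeI}, applied to the pair $\{f_\alpha,f_\beta\}$ rather than to $\{f_i,f_j\}$, shows $\{f_\alpha,f_\beta,f_\gamma\}$ is feasible; since $|\dS_2|\ge\sqrt n/\log^2 n$, more than $\sqrt n\log^2 n$ type-\1 pairs in $\dS_1$ would produce more than $n$ feasible triples, contradicting Proposition~\ref{prop:feasible} and \eqref{equ:(n-2)cycles}. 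With that replacement your proof becomes the paper's argument (the paper phrases the dichotomy directly: either some side has fewer than $\sqrt n/\log^2 n$ paths, and the other side is $\dS'$, or both are large and the above yields the forbidden bipartition); the rest of your write-up, including the choice of the small-$\dW_{ij}$ pair, the edge-disjointness via Proposition~\ref{prop:MN-typeI}, the size bookkeeping, and the doubling through $cf_id\cup cf_jd$ (the mirror image of the paper's $af_ib\cup af_jb$), matches the paper up to symmetry.
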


\begin{proof}
If every type-\1 pair $\{f_i,f_j\}$ in $\dS$ has $|\dW_{ij}|\geq 51\sqrt{n}/\log^2 n$,
then by \eqref{equ:Wij}, one can infer that $\dS$ itself contains at most $\sqrt{n}\log^2 n$ type-\1 pairs,
a contradiction to the assumption (as we can just take $\dS_1=\dS$ and $\dS_2=\emptyset$).
So we may assume that there is a type-\1 pair $\{f_i,f_j\}$ in $\dS$ with $|\dW_{ij}|<51\sqrt{n}/\log^2 n$, where $i<j$.
Let $f_k$ be the base of $\{f_i,f_j\}$ and $a\prec b\pq c\prec d$ be all splitting vertices in $f_i\cup f_j$, where $x\pq a$ and $d\pq y$.

By Proposition~\ref{prop:non-feasible-triple-typeI}, any $f_\ell\in \dS\bh (\dW_{ij}\cup\{f_i,f_j,f_k\})$ belongs to $\dM_{ij}\cup \dN_{ij}$.
Let $\dS_1=(\dS\cap \dM_{ij})\bh(\dW_{ij}\cup\{f_i,f_j,f_k\})$ and $\dS_2=(\dS\cap \dN_{ij})\bh(\dW_{ij}\cup\{f_i,f_j,f_k\})$.
Then we have $$|\dS_1|+|\dS_2|=|\dS\bh (\dW_{ij}\cup\{f_i,f_j,f_k\})|\geq |\dS|-52\sqrt{n}/\log^2 n.$$
By Proposition~\ref{prop:MN-typeI}, there exists some vertex $z$ in $bf_ic$ such that any $f_p\in \dS_1$ and $f_q\in \dS_2$ satisfy $V(x_pf_py_p)\pq z\pq V(x_qf_qy_q)$. So $G(\dS_1)$ and $G(\dS_2)$ are edge-disjoint.
Now we claim that for any $\ell\in \{1,2\}$,
any type-\1 pair $\{f_\alpha, f_\beta\}$ in $\dS_\ell$ together with any path $f_\gamma$ in $\dS_{3-\ell}$ form a feasible triple.
Without loss of generality, let $\ell=1$.
Then every vertex in the two cycles of $f_\alpha \dif f_\beta$ precedes $z$ in $\<$,
so $f_\gamma\notin \dM_{\alpha\beta}\cup \dN_{\alpha\beta}$.
Then Proposition \ref{prop:non-feasible-triple-typeI} shows that $\{f_\alpha, f_\beta, f_\gamma\}$ is feasible, proving the claim.

Suppose that every $\dS_\ell$ for $\ell\in [2]$ contains at least $\sqrt{n}/\log^2 n$ paths.
If for some $t\in [2]$, $\dS_t$ contains more than $\sqrt{n}\log^2 n$ type-\1 pairs,
then the above claim shows that there are more than $n$ feasible triples, a contradiction to Proposition~\ref{prop:feasible}.
So we may assume that each of $\dS_1$ and $\dS_2$ contains at most $\sqrt{n}\log^2 n$ type-\1 pairs,
but then such $\dS_1$ and $\dS_2$ contradict our assumption.
Therefore, there exists some $\ell\in [2]$ such that $|\dS_\ell|<\sqrt{n}/\log^2 n$ and thus $|\dS_{3-\ell}|\geq |\dS|-53\sqrt{n}/\log^2 n$.
We now explain that such $\dS_{3-\ell}$ is the desired $\dS'$.
Without loss of generality, let $\ell=1$ and let the separator of $\dS_2$ be $\{x',y'\}$.
Then we can derive $x\pq a\<b\pq z\pq x'\<y'\pq y$.
It is clear that $af_ib$ and $af_jb$ can be extended to two paths in $G(\dS)$ from $x$ to $x'$ and internally disjoint from $G(\dS_2)$.
This completes the proof.
\end{proof}

We are ready to prove Lemma \ref{lem: type 1}.

\begin{proof}[\bf Proof of Lemma \ref{lem: type 1}.]
Assume on the contrary that there do not exist two disjoint subsets $\dS_1,\dS_2$ in $\dF$ satisfying that $|\dS_1|+|\dS_2|>s-30\sqrt{n}/\log n$, $G(\dS_1)$ and $G(\dS_2)$ are edge-disjoint, and each $\dS_i$ contains at most $\sqrt{n}\log^2n$ type-\1 pairs.

Let $\dF_0=\dF$ have separator $\{x_0,y_0\}$ with $x_0\<y_0$. So $|\dF_0|=s+1$.
Suppose that we have defined $\dF_i\subseteq \dF$ for some $i\geq 0$ with separator $\{x_i, y_i\}$ such that $x_i\<y_i$ and $|\dF_i|\geq s-\frac{30\sqrt{n}}{\log n}+\frac{52\sqrt{n}}{\log^2 n}$.
One can easily derive from our assumption that such $\dF_i$ satisfies the conditions of Proposition \ref{prop:typeI-iterate} (for the $\dS$ therein).
So by Proposition \ref{prop:typeI-iterate}, there exists $\dF_{i+1}\subseteq \dF_i$ with separator $\{x_{i+1},y_{i+1}\}$ such that $|\dF_{i+1}|\geq |\dF_i|-\frac{53\sqrt{n}}{\log^2 n}$, $x_i\pq x_{i+1}\<y_{i+1}\pq y_i$, and each $(x_{i+1},y_{i+1})$-path in $G(\dF_{i+1})$ can be extended to two different $(x_i,y_i)$-paths in $G(\dF_i)$.
We repeat this process until the first subset (say $\dF_t$) of size less than 
$s-\frac{30\sqrt{n}}{\log n}+\frac{52\sqrt{n}}{\log^2 n}$  appears.
Since $|\dF_i|-|\dF_{i+1}|\leq \frac{53\sqrt{n}}{\log^2 n}$ for  $0\leq i< t$ and $n$ is sufficiently large,
we have $t\geq \frac{30}{53}\log n-1$.

Note that for each $0\leq i<t$, every $(x_{i+1},y_{i+1})$-path in $G(\dF_{i+1})$ can be extended to two distinct $(x_i,y_i)$-paths in $G(\dF_i)$.
Also $G(\dF_t)$ contains at least $|\dF_t|$ many $(x_t,y_t)$-paths,
where $|\dF_t|\geq s-\frac{30\sqrt{n}}{\log n}-\frac{\sqrt{n}}{\log^2 n}\geq \sqrt n /2$.
Hence, there exist at least $2^t |\dF_t|\geq 2^t \sqrt n/2\geq n$ distinct $(x_0,y_0)$-paths in $G(\dF)=G(\dF_0)$,
which can be extended to at least $n$ distinct $(u,v)$-paths and thus at least $n$ cycles in $G$.
This contradicts \eqref{equ:(n-2)cycles} and completes the proof of Lemma \ref{lem: type 1}.
\end{proof}

\subsection{Proof of Lemma \ref{lem: type 2}}
In this subsection we prove Lemma \ref{lem: type 2}.
Throughout we assume $i<j$ and $\{f_i,f_j\}$ is a type-\2 pair with base $f_k$.
Let $a\prec b\prec c\prec d$ be vertices in $f_k$ such that $\ps(i,k)=af_ic$ and $\ps(j,k)=bf_jd$.

Before we show some structural properties on type-\2 pairs, we point out that $af_ic$ and $bf_jd$ are disjoint.
This is because otherwise there exists some $z\in V(af_ic)\cap V(bf_jd)$ and thus $f_i\dif f_j$ contains at least two cycles,
from which Proposition \ref{prop:diff1} infers that $\{f_i,f_j\}$ is type-\1, a contradiction.

\begin{prop}\label{prop:type-II-struc}
$f_i=uf_ka\cup af_ic\cup cf_kv$, $f_j=uf_kb\cup bf_jd\cup df_kv$, and $E(P_k)\cap E(bf_kc)\neq \emptyset$.
\end{prop}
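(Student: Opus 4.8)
The plan is to analyze the type-\2 pair $\{f_i,f_j\}$ directly from the definition of its base $f_k$ and the preliminary propositions on $\dF$. By definition, $\ps(i,k)=af_ic$ is the primary segment of $f_i\bh f_k$, i.e.\ the unique subpath of $f_i\bh f_k$ containing edges of $P_i$, and similarly $\ps(j,k)=bf_jd$ contains edges of $P_j$; also $a\< b\< c\< d$ all lie on $f_k$. Since $\{f_i,f_j\}$ is not type-\1, Proposition~\ref{prop:diff1} tells us that $f_i\dif f_k$ is a single cycle (it cannot be two cycles, else by Proposition~\ref{prop:2cycleI} the segment $af_ic$ would not contain edges of $P_i$, contradicting that it is the primary segment; more carefully, $\{f_i,f_k\}$ type-\1 would force $f_i\bh f_k$ to split into a primary segment disjoint from $P_i$'s side, which one rules out by the position of $\ell_i$). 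So $f_i\dif f_k = af_ic\cup af_kc$ is one cycle, which means $f_i\bh f_k$ has the single component $af_ic$, and therefore $f_i=uf_ka\cup af_ic\cup cf_kv$. The identical argument with $j$ in place of $i$ and $b,d$ in place of $a,c$ gives $f_j=uf_kb\cup bf_jd\cup df_kv$.

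Next I would establish $E(P_k)\cap E(bf_kc)\neq\emptyset$. The key observation is that $bf_kc$ is the portion of $f_k$ lying ``between'' the two primary segments. Note $uf_kb$ is a subpath of $uf_ka\subseteq f_i$ shared with $f_i$, while $cf_kv$ is likewise shared with $f_i$; and $uf_ka$ is shared with $f_i$ only, not $f_j$, etc. I want to argue that $P_k$ cannot avoid $bf_kc$. Suppose $E(P_k)\cap E(bf_kc)=\emptyset$. Then since $P_k$ is a single path on $f_k$ whose vertices increase in $\prec$ (Proposition~\ref{prop:orient-path}), $P_k$ is entirely contained in $uf_kb$ or entirely in $cf_kv$. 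If $P_k\subseteq uf_kb\subseteq f_i\cap f_j$, then $P_k\subseteq f_i$, contradicting Proposition~\ref{prop:P-notin-f}; similarly $P_k\subseteq cf_kv$ gives $P_k\subseteq f_i$, again a contradiction. Hence $P_k$ must use an edge of $bf_kc$.

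The step I expect to require the most care is the justification that $\{f_i,f_k\}$ is \emph{not} type-\1, which underlies the clean decomposition $f_i=uf_ka\cup af_ic\cup cf_kv$. One must rule out that $f_i\bh f_k$ has a second component (disjoint from $P_i$'s edges). If it did, then by Proposition~\ref{prop:2cycleI} applied to $\{f_i,f_k\}$, one of the two splitting-vertex pairs would bound a cycle containing all of $P_i$ and the other would be an ``empty'' $P_i$-free segment; combined with the minimality in the choice of ears (the footnote $\ell_i\prec r_i$, and Proposition~\ref{prop:pre-suc}) one derives that $\ell_k$ precedes the first splitting vertex, which forces $k<i$ and then, tracing through, that $\{f_i,f_k\}$ being type-\1 with $f_i$ on the ``outside'' is incompatible with $f_k$ being the crossing path whose primary segments with $f_i$ and $f_j$ both exist as genuine subpaths containing $P_i$- and $P_j$-edges. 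Essentially one shows that the definition of a crossing path already encodes that $af_ic$ is a single segment, so no second component can occur. Once that is pinned down, the three claimed identities follow immediately, and the $P_k\cap bf_kc$ claim is the short argument above using Proposition~\ref{prop:P-notin-f}.
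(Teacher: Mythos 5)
Your argument for the third claim is essentially sound (and close to the paper's, which deduces it from Proposition~\ref{prop:P-notin-f} applied to $\ps(k,i)=af_kc$ and $\ps(k,j)=bf_kd$), up to one slip: $uf_kb$ is \emph{not} contained in $f_i\cap f_j$, since $af_kb$ lies on $f_k\setminus f_i$; you should argue $P_k\subseteq uf_kb\subseteq f_j$ or $P_k\subseteq cf_kv\subseteq f_i$, either of which contradicts Proposition~\ref{prop:P-notin-f}. The genuine gap is in the first two claims, whose entire content is that $\{f_i,f_k\}$ and $\{f_j,f_k\}$ are not type-\1, and your justification of that is wrong. If $\{f_i,f_k\}$ were type-\1, Proposition~\ref{prop:2cycleI} would \emph{not} force $af_ic$ to be free of $P_i$-edges: it would place $P_i$ entirely inside one of the two components of $f_i\setminus f_k$, and that component is precisely the primary segment $\ps(i,k)=af_ic$. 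The definition of a crossing path only identifies the primary segment; it says nothing that excludes a second, $P_i$-free component of $f_i\setminus f_k$, so your closing assertion that ``the definition of a crossing path already encodes that $af_ic$ is a single segment'' is false, and the opening inference ``since $\{f_i,f_j\}$ is not type-\1, $f_i\dif f_k$ is a single cycle'' is a non sequitur.

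What actually rules out the second component is the minimality of $k$, i.e.\ that $f_k$ is the \emph{base} (the crossing path of smallest index) of $\{f_i,f_j\}$ --- a hypothesis your sketch never uses. The paper's proof assumes $\{f_i,f_k\}$ is type-\1, applies Proposition~\ref{prop:2cycleI} to obtain its base $f_\ell$ with $\ell<\min\{i,k\}$, notes $af_kc=af_\ell c$, $af_ic=\ps(i,\ell)$ and $cf_\ell v=cf_iv$, locates the unique splitting vertex $d'$ of $f_i\cup f_j$ on $cf_iv$, and then shows by a case analysis (the case $d\in V(bf_jd')$, the case $d'\in V(bf_jd)$ with $E(P_j)\cap E(bf_jd')\neq\emptyset$, and the residual case $P_j\subseteq d'f_jv$, which is eliminated by Proposition~\ref{prop:diff1}) that $f_\ell$ is itself a crossing path of $\{f_i,f_j\}$ with $\ell<k$, contradicting the choice of $f_k$ as base. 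Your appeal to ``the position of $\ell_i$'' and the ear-minimality in the construction does not substitute for this argument; without invoking the minimality of the base, the decomposition $f_i=uf_ka\cup af_ic\cup cf_kv$ is not established, and the same gap affects the claim for $f_j$.
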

\begin{proof}
We first show that both $\{f_i,f_k\}$ and $\{f_j,f_k\}$ are not type-\1.

Suppose for a contradiction that $\{f_i,f_k\}$ is a type-\1 pair.
By Proposition~\ref{prop:2cycleI}, $\{f_i,f_k\}$ has a base path $f_\ell$ for some $\ell<\min\{i,k\}$.
Our plan is to show that $f_\ell$ is a crossing path of $\{f_i,f_j\}$, which together with $\ell <k$ would contradict that $f_k$ is the base of $\{f_i,f_j\}$.
To see this, first note that $af_ic\cup af_kc$ is a cycle in $f_i\dif f_k$. So $af_kc=af_\ell c$ and $af_ic=\ps(i,\ell)$.
Also $b$ is a splitting vertex of $f_j\cup f_\ell$ and obviously there is no other splitting vertex of $f_j\cup f_\ell$ in $bf_\ell c$.
By Proposition~\ref{prop:2cycleI} we also have $cf_\ell v=cf_iv$.
Since $\{f_i,f_j\}$ is not type-\1,
there is at most one splitting vertex of $f_i\cup f_j$ in $cf_iv$;
on the other hand, as $c \notin f_j$ there should be some splitting vertex of $f_i\cup f_j$ in $cf_iv$.
Let $d'$ be the unique splitting vertex of $f_i\cup f_j$ in $cf_iv$.
Since $cf_\ell v=cf_iv$, $d'$ is also a splitting vertex of $f_j\cup f_\ell$.
If $d\in V(bf_jd')$, as $E(P_j)\cap E(bf_jd)\neq \emptyset$, we have $\ps(j,\ell)=bf_jd'$.
Since $a\< b\< c\< d'$ lie in $f_\ell$ with $af_ic=\ps(i,\ell)$ and $\ps(j,\ell)=bf_jd'$,
this shows that $f_\ell$ is a crossing path of $\{f_i,f_j\}$, a contradiction.
Hence, we have $d'\in V(bf_jd)$. If $E(P_j)\cap E(bf_jd')\neq \emptyset$, then again we have $\ps(j,\ell)=bf_jd'$, which also shows that $f_\ell$ is a crossing path of $\{f_i,f_j\}$.
So we may assume that $E(P_j)\cap E(bf_jd')=\emptyset$ and thus $P_j\subseteq d'f_jv$.
However in this case, since $\{f_i.f_j\}$ is type-\2,
the cycle $f_i\dif f_j$ belongs to $uf_id'\cup uf_jd'$, which cannot contain any edge in $P_j$, a contradiction to Proposition \ref{prop:diff1}.
This proves that $\{f_i,f_k\}$ is not type-\1 and thus $f_i=uf_ka\cup af_ic\cup cf_kv$.
Similarly, we can show $f_j=uf_kb\cup bf_jd\cup df_kv$.

By Proposition \ref{prop:P-notin-f}, each of $\ps(k,i)=af_kc$ and $\ps(k,j)=bf_kd$ contains some edge in $P_k$.
Therefore, we see that $bf_kc$ contains some edge in $P_k$, completing the proof.
\end{proof}

Let $\dA_{ij}$ consist of all paths $f_\ell\in \dF$ with $\ell\notin \{i,j,k\}$ satisfying that $f_\ell=uf_ix_\ell \cup x_\ell f_\ell y_\ell\cup y_\ell f_iv$,
where $x_\ell, y_\ell\in V(af_ic)$ are splitting vertices in $f_\ell\cup f_i$ and $x_\ell f_\ell y_\ell$ is disjoint from $f_k\cup f_j$.
Let $\dB_{ij}$ consist of all paths $f_\ell\in \dF$ with $\ell\notin \{i,j,k\}$ satisfying that $f_\ell=uf_jx_\ell \cup x_\ell f_\ell y_\ell\cup y_\ell f_jv$,
where $x_\ell, y_\ell\in V(bf_jd)$ are splitting vertices in $f_\ell\cup f_j$ and $x_\ell f_\ell y_\ell$ is disjoint from $f_k\cup f_i$.


\begin{prop}\label{prop:non-feasible-triple-type2}
Let $\ell\notin \{i,j,k\}$. If $f_\ell \notin \dW_{ij}$, then $f_\ell \in \dA_{ij}\cup \dB_{ij}$.
\end{prop}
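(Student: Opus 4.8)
The plan is to mimic the structure of the proof of Proposition~\ref{prop:non-feasible-triple-typeI}: fix $\ell\notin\{i,j,k\}$ with $f_\ell\notin\dW_{ij}$ (so in particular $\{f_i,f_j,f_\ell\}$ is not feasible and is not contained in a feasible quadruple), and show that the assumption that $f_\ell\notin\dA_{ij}\cup\dB_{ij}$ allows one to exhibit either a $3$-feasible cycle for $\{f_i,f_j,f_\ell\}$ or a $4$-feasible cycle for $\{f_i,f_j,f_\ell,f_\alpha\}$ for a suitable $f_\alpha$, contradicting $f_\ell\notin\dW_{ij}$. I would first record, using Proposition~\ref{prop:degenerate-2} together with the fact that $\{f_i,f_j\}$ is type-\2 (hence not type-\1), that $P_\alpha\bh(f_\beta\cup f_\gamma)\neq\emptyset$ for every permutation $\{\alpha,\beta,\gamma\}=\{i,j,\ell\}$; this rules out degenerate configurations and guarantees that any feasible cycle we locate is genuinely $3$-feasible. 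Then, by Proposition~\ref{prop:type-II-struc}, $f_i\cup f_j$ decomposes into the edge-disjoint pieces $uf_kb$, $bf_kc$ (which carries an edge of $P_k$), $cf_kv$ along $f_k$, plus the "crossing arcs'' $af_ic$ and $bf_jd$, with $P_i\subseteq af_ic$ and $P_j\subseteq bf_jd$ by the definition of primary segments.

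Next I would pick a subpath $xf_\ell y$ of $f_\ell\bh(f_i\cup f_j)$ containing an edge of $P_\ell$, with $x\prec y$, and do a case analysis on the location of the endpoints $x,y$ among the arcs $af_ic$, $bf_jd$, and the three $f_k$-pieces $uf_ka$, $af_kb$, $bf_kc$, $cf_kd$, $df_kv$. The easy cases are when $x$ and $y$ lie in "incompatible'' arcs: e.g. if $x$ lies in $af_ic$ (containing $P_i$) and $y$ lies in $bf_jd$ (containing $P_j$), or if one endpoint lies on one of these two $P$-rich arcs and the other lies far away on $f_k$, then one can reroute through $\{uv\}$ to build a $3$-feasible cycle hitting $P_i\bh(f_j\cup f_\ell)$, $P_j\bh(f_i\cup f_\ell)$ and $P_\ell\bh(f_i\cup f_j)$, contradicting non-feasibility — just as in Figure~2(a),(b). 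The substantive cases are when both $x,y$ lie inside a single arc: if both lie in $af_ic$, then $f_i\dif f_\ell$ contains the cycle $xf_\ell y\cup xf_iy$, and I would argue (exactly as in Proposition~\ref{prop:non-feasible-triple-typeI}, using Proposition~\ref{prop:2cycleI}) that $\{f_i,f_\ell\}$ cannot be type-\1 — otherwise $P_i$ lies entirely in $af_ix$ or $xf_ib$ and a $3$-feasible cycle appears — so $f_\ell\bh f_i$ is the single path $xf_\ell y$; combined with the requirement that $xf_\ell y$ avoid $f_k\cup f_j$ (any intersection would again either produce a $3$-feasible cycle or, via a type-\1 pair $\{f_\ell,f_k\}$ or $\{f_\ell,f_j\}$ and Proposition~\ref{prop:2cycleI}, a $4$-feasible quadruple), this places $f_\ell\in\dA_{ij}$. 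The symmetric subcase with both endpoints in $bf_jd$ gives $f_\ell\in\dB_{ij}$.

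The remaining possibility is that both $x$ and $y$ lie on $f_k$ (in one of its five sub-arcs, possibly different ones). Here the strategy is the same as the $f_k$-analysis at the end of the proof of Proposition~\ref{prop:non-feasible-triple-typeI}: for almost every placement of $x$ and $y$ on $f_k$ one can connect $uf_i x$ (or $uf_j x$) to $y f_j v$ (or $y f_i v$) through $xf_\ell y$ and $\{uv\}$ to obtain a $3$-feasible cycle, since the arc $bf_kc$ always carries an edge of $P_k$ and $P_i,P_j$ sit on the crossing arcs; the only way to avoid this forces $x\preceq b$ and $y\succeq c$ — but then $uf_jx\cup xf_\ell y\cup yf_iv\cup\{vu\}$ is itself $3$-feasible, a final contradiction. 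Thus no case with $x,y$ both on $f_k$ survives, and we conclude $f_\ell\in\dA_{ij}\cup\dB_{ij}$. The main obstacle I anticipate is the bookkeeping in the two-endpoints-on-$f_k$ case and in ruling out, uniformly and cleanly, that $xf_\ell y$ meets $f_k$ or $f_j$ in the $af_ic$ subcase: one must carefully invoke Proposition~\ref{prop:2cycleI} for each potential type-\1 pair among $\{f_i,f_\ell\},\{f_j,f_\ell\},\{f_k,f_\ell\}$ and check that a $4$-feasible cycle for $\{f_i,f_j,f_\ell,\cdot\}$ can always be assembled, so that $f_\ell\in\dW_{ij}$ follows; this is where the argument becomes delicate, though structurally parallel to the type-\1 analysis already carried out.
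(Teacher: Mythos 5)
There is a genuine gap, and it sits exactly where the paper has to work hardest. Your setup asserts that $P_i\subseteq af_ic$ and $P_j\subseteq bf_jd$ ``by the definition of primary segments,'' and that Proposition~\ref{prop:degenerate-2} gives $P_\alpha\bh(f_\beta\cup f_\gamma)\neq\emptyset$ for \emph{every} $\{\alpha,\beta,\gamma\}=\{i,j,\ell\}$. Neither holds. The primary segment $\ps(i,k)=af_ic$ only contains \emph{some} edge of $P_i$; since the base $f_k$ of a type-\2 pair need not have smaller index, edges of $P_i$ may also lie on $f_k$, and, more importantly for feasibility, on $f_\ell$ or $f_j$. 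Similarly, Proposition~\ref{prop:degenerate-2} only rules out $P_\ell\subseteq f_i\cup f_j$; the configuration $P_i\subseteq f_j\cup f_\ell$ (or $P_i\subseteq f_k\cup f_\ell$) is not a contradiction --- it merely makes some other pair type-\1 with base $f_i$ --- and must instead be converted into a $4$-feasible cycle placing $f_\ell$ in $\dW_{ij}$. This is precisely the role of the $\alpha$-uniqueness statements \eqref{equ:ijkl-uniqueness} and \eqref{equ:ijl-uniqueness} in the paper, whose proofs are themselves nontrivial (the case $E(af_ic)\cap E(P_i)\subseteq E(f_\ell)$ needs $\dM_{k\ell},\dN_{k\ell}$, Proposition~\ref{prop:non-feasible-triple-typeI} and a $4$-feasible cycle). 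Without this machinery, none of the cycles you propose is certified to contain an edge of $P_i\bh(f_j\cup f_\ell)$, $P_j\bh(f_i\cup f_\ell)$, etc., which is the entire content of being $3$- or $4$-feasible; so the case analysis does not close.

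Concretely, two of your cases fail as stated. In the case with both $x,y$ on $f_k$, the hard subcase is $b\pq x\<y\pq c$ (both endpoints inside $bf_kc$), not ``$x\pq b$ and $y\sq c$,'' and your terminal cycle $uf_jx\cup xf_\ell y\cup yf_iv\cup\{uv\}$ avoids both crossing arcs $af_ic$ and $bf_jd$, hence need contain no edge of $P_i$ or $P_j$ and is not $3$-feasible; the paper instead distinguishes whether $E(P_k)\cap E(xf_ky)\neq\emptyset$ and otherwise builds a $4$-feasible cycle using the $k$-uniqueness of $bf_kx$ or $yf_kc$. Second, the mixed case (one endpoint on $af_ic$ or $bf_jd$, the other on $f_k$, in particular inside $bf_kc$) is not an ``easy reroute'': in the paper it is the most delicate case, and in its final subcase one must pass to a \emph{new} path $f_{k'}$, the base of the type-\1 pair $\{f_k,f_\ell\}$, verify the relevant uniqueness properties for $\{i,j,k',\ell\}$, and exhibit a $4$-feasible cycle for $\{f_i,f_j,f_{k'},f_\ell\}$. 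Your proposal does not anticipate that a path outside $\{f_i,f_j,f_k,f_\ell\}$ is needed at all, so this branch is missing rather than merely sketched.
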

\begin{proof}
Fix some $\ell\notin \{i,j,k\}$ with $f_\ell \notin \dW_{ij}$.
Our general proof strategy is, using the symmetry between $x$ and $y$ and the symmetry between $af_ic$ and $bf_jd$,
either to show $f_\ell\in \dA_{ij}\cup \dB_{ij}$, or to find a 3-feasible cycle $C(i,j,\ell)$, or to find a 4-feasible cycle $C(i,j,k,\ell)$.
Each of the latter two cases implies that $f_\ell \in \dW_{ij}$ and thus reaches a contradiction.

We first show that there exists some path say $xf_\ell y$ in $f_\ell\backslash (f_i\cup f_j\cup f_k)$, which contains some edge in $P_\ell$.
Otherwise, $P_\ell \subseteq f_i\cup f_j\cup f_k$, then by Proposition~\ref{prop:degenerate-3}
$f_\ell$ is the base of some type-\1 pair in $\{f_i, f_j, f_k\}$;
however, by Proposition~\ref{prop:type-II-struc} none of the pair in $\{f_i, f_j, f_k\}$ is type-\1, a contradiction.

We shall divide the coming proof into several cases by considering the locations of $x,y$.
Note that by Proposition~\ref{prop:type-II-struc}, each of $x,y$ lies in $af_ic, bf_jd$ or $f_k$.
Before we proceed, it will be convenient for later use to collect some properties about $af_ic, bf_jd, bf_kc$ and $xf_\ell y$.
Given $\alpha\in I\subseteq \{0\}\cup [s]$, we write $\cup I\bh \alpha$ for the union of paths $f_\beta$ for all $\beta\in I\bh\{\alpha\}$.
A path $Q$ is called {\it $\alpha$-unique} for $I$, if either $P_\alpha\bh (\cup I\bh \alpha)=\emptyset$ or $E(Q)\cap E(P_\alpha\bh (\cup I\bh \alpha))\neq \emptyset$.
We claim that
\begin{align}\label{equ:ijkl-uniqueness}
af_ic \mbox{ is } i\mbox{-unique, } bf_jd \mbox{ is } j\mbox{-unique, } bf_kc \mbox{ is } k\mbox{-unique, and } xf_\ell y \mbox{ is } \ell\mbox{-unique for } \{i,j,k,\ell\}.
\end{align}
This is clear for $xf_\ell y$.
Considering $bf_kc$, we have either $E(bf_kc)\cap E(P_k\bh (f_i\cup f_j\cup f_\ell))\neq \emptyset$ or $E(bf_kc)\cap E(P_k)\subseteq E(f_\ell)$,
the latter of which implies that $P_k\subseteq f_i\cup f_j\cup f_\ell$ (using the structure of Proposition~\ref{prop:type-II-struc}).
Hence, $bf_kc$ is $k$-unique for $\{i,j,k,\ell\}$.
Similarly, one can see that $af_ic$ is $i$-unique and $bf_jd$ is $j$-unique for $\{i,j,k,\ell\}$, completing the proof of \eqref{equ:ijkl-uniqueness}.
Also we claim that
\begin{align}\label{equ:ijl-uniqueness}
af_ic \mbox{ is } i\mbox{-unique, } bf_jd \mbox{ is } j\mbox{-unique, and } xf_\ell y \mbox{ is } \ell\mbox{-unique for } \{i,j,\ell\}.
\end{align}
This is also clear for $xf_\ell y$. Now suppose $E(af_ic)\cap E(P_i)\subseteq E(f_\ell)$, which implies $P_i\subseteq f_k\cup f_\ell$.
By Proposition~\ref{prop:degenerate-2}, $\{f_k,f_\ell\}$ is type-\1 with base $f_i$ and thus $f_i\subseteq f_k\cup f_\ell$.
Note that the splitting vertices $a,c$ in $f_i\cup f_k$ are also splitting vertices in $f_k\cup f_\ell$.
Since $b,d$ are the splitting vertices in $f_j\cup f_k$ such that $a\<b\<c\<d$ lie in $f_k$,
we can conclude that $f_j \not \in \dM_{k\ell}\cup \dN_{k\ell}$ and
thus Proposition \ref{prop:non-feasible-triple-typeI} implies that $\{f_j,f_k,f_\ell\}$ is feasible.
Using the fact that $f_i\subseteq f_k\cup f_\ell$, one can verify that any $3$-feasible cycle for $\{f_j,f_k,f_\ell\}$ is a 4-feasible cycle for $\{f_i,f_j,f_k,f_\ell\}$.
This shows that $f_\ell \in \dW_{ij}$, a contradiction.
Hence $E(af_ic)\cap E(P_i)\not\subseteq E(f_\ell)$, which implies that $af_ic$ is $i$-unique for $\{i,j,\ell\}$.
Analogously, $E(bf_jd)\cap E(P_j)\not\subseteq E(f_\ell)$ and thus $bf_jd$ is $j$-unique for $\{i,j,\ell\}$, proving \eqref{equ:ijl-uniqueness}.

\begin{figure}[ht!]\label{fig:A}
\begin{minipage}{0.33 \linewidth}
        \centering
\begin{tikzpicture}[scale=0.7,line cap=round,line join=round,>=triangle 45,x=1cm,y=1cm]
\clip(-0.75,-0.5) rectangle (7.75,2.5);
\draw [line width=1pt] (0,0)-- (7,0);

\draw [line width=1pt,color=red] (1,0 ) to[out=90,in=90]  (4.5,0);

\draw [line width=1pt,color=blue] (2.5,0 ) to[out=90,in=90]  (6,0);


 \draw [line width=0.5pt] (1.5,0.74)  .. controls (1.75,2) and  ( 2.5,2) ..   (3,1.03);
\begin{scriptsize}
\draw [fill=black] (0,0) circle (1.5pt);
\draw[color=black] (0 ,-0.25) node {$u$};
\draw [fill=black] (7,0) circle (1.5 pt);
\draw[color=black] (7 ,-0.25) node {$v$};

\draw [fill=black] (1,0) circle (1.5pt);
\draw[color=black] (1 ,-0.25) node {$a$};
\draw [fill=black] (2.5,0) circle (1.5pt);
\draw[color=black] (2.5 ,-0.25) node {$b$};
\draw [fill=black] (4.5,0) circle (1.5pt);
\draw[color=black] (4.5,-0.25) node {$c$};
\draw [fill=black] (6,0) circle (1.5pt);
\draw[color=black] (6 ,-0.25) node {$d$};

\draw [fill=black] (1.5,0.74) circle (1.5pt);
\draw[color=black] (1.5,0.5) node {$x$};

\draw [fill=black] (3,1.03) circle (1.5pt);
\draw[color=black] (2.7,0.8) node {$y$};

\draw[color=black] (1,0.8) node {$f_i$};
\draw[color=black] (6.5, 0.3) node {$f_k$};
\draw[color=black] (4.5 ,1.3) node {$f_j$};
\draw[color=black] (3 ,1.6) node {$f_{\ell}$};
\end{scriptsize}
\end{tikzpicture}
\caption*{(a)}
\end{minipage}
\begin{minipage}{0.33\linewidth}
        \centering
\begin{tikzpicture}[scale=0.7,line cap=round,line join=round,>=triangle 45,x=1cm,y=1cm]
\clip(-0.75,-0.5) rectangle (7.75,2.5);
\draw [line width=1pt] (0,0)-- (7,0);
\draw [line width=1pt,color=red] (1,0 ) to[out=90,in=90]  (4.5,0);
\draw [line width=1pt,color=blue] (2.5,0 ) to[out=90,in=90]  (6,0);

\draw [line width=0.5pt] (2.2,1) to[out=50,in=130]  (4.7,1.03);

\begin{scriptsize}
\draw [fill=black] (0,0) circle (1.5pt);
\draw[color=black] (0 ,-0.25) node {$u$};
\draw [fill=black] (7,0) circle (1.5 pt);
\draw[color=black] (7 ,-0.25) node {$v$};

\draw [fill=black] (1,0) circle (1.5pt);
\draw[color=black] (1 ,-0.25) node {$a$};
\draw [fill=black] (2.5,0) circle (1.5pt);
\draw[color=black] (2.5 ,-0.25) node {$b$};
\draw [fill=black] (4.5,0) circle (1.5pt);
\draw[color=black] (4.5,-0.25) node {$c$};
\draw [fill=black] (6,0) circle (1.5pt);
\draw[color=black] (6 ,-0.25) node {$d$};

\draw [fill=black] (2.2,1) circle (1.5pt);
\draw[color=black] (2.2,0.8) node {$x$};

\draw [fill=black] (4.7,1) circle (1.5pt);
\draw[color=black] (4.7,0.8) node {$y$};

\draw[color=black] (1,0.8) node {$f_i$};
\draw[color=black] (6 ,0.8) node {$f_j$};
\draw[color=black] (2.5 ,1.6) node {$f_{\ell}$};
\draw[color=black] (6.5, 0.3) node {$f_k$};
\end{scriptsize}
\end{tikzpicture}
\caption*{(b)}
\end{minipage}
\begin{minipage}{0.33\linewidth}
        \centering
\begin{tikzpicture}[scale=0.7,line cap=round,line join=round,>=triangle 45,x=1cm,y=1cm]
\clip(-0.75,-0.5) rectangle (7.75,2.5);
\draw [line width=1pt] (0,0)-- (7,0);
\draw [line width=1pt,color=red] (1,0 ) to[out=90,in=90]  (4.5,0);
\draw [line width=1pt,color=blue] (2.5,0 ) to[out=90,in=90]  (6,0);

\draw [line width=0.5pt] (3.5,0) to[out=90,in=200]  (4,0.75) ;

\begin{scriptsize}
\draw [fill=black] (0,0) circle (1.5pt);
\draw[color=black] (0 ,-0.25) node {$u$};
\draw [fill=black] (7,0) circle (1.5 pt);
\draw[color=black] (7 ,-0.25) node {$v$};

\draw [fill=black] (1,0) circle (1.5pt);
\draw[color=black] (1 ,-0.25) node {$a$};
\draw [fill=black] (2.5,0) circle (1.5pt);
\draw[color=black] (2.5 ,-0.25) node {$b$};
\draw [fill=black] (4.5,0) circle (1.5pt);
\draw[color=black] (4.5,-0.25) node {$c$};
\draw [fill=black] (6,0) circle (1.5pt);
\draw[color=black] (6 ,-0.25) node {$d$};

\draw [fill=black] (3.5,0) circle (1.5pt);
\draw[color=black] (3.5,-0.25) node {$y$};

\draw [fill=black] (4,0.75) circle (1.5pt);
\draw[color=black] (4.2,0.8) node {$x$};

\draw[color=black] (1,0.8) node {$f_i$};
\draw[color=black] (6.5, 0.3) node {$f_k$};
\draw[color=black] (6 ,0.8) node {$f_j$};
\draw[color=black] (3.35 ,0.5) node {$f_{\ell}$};
\end{scriptsize}
\end{tikzpicture}
\caption*{(c)}
\end{minipage}
\caption*{Figure 3}
\end{figure}
Now consider the case that both of $x,y$ lie in $af_ic$ or $bf_jd$.
By symmetry, let us assume $x,y\in V(af_ic)$ with $x\<y$  (see Figure 3-a).
If $\{f_i, f_\ell\}$ is not type-\1, then it is easy to see that $f_\ell \in \dA_{ij}$.
Now suppose $\{f_i, f_\ell\}$ is type-\1.
By Proposition~\ref{prop:2cycleI}, $P_\ell\subseteq xf_\ell y$ and $E(xf_iy)\cap E(P_i)=\emptyset$.
As $\ps(i,k)=af_ic$, we see that $R=af_ix\cup xf_\ell y\cup yf_ic$ is an $(a,c)$-path,
which is both $i$-unique and $\ell$-unique for $\{i,j,\ell\}$.
So by \eqref{equ:ijl-uniqueness}, $R\cup cf_id\cup df_ja$ forms a 3-feasible cycle $C(i,j,\ell)$.

Suppose that $x\in V(af_ic)\backslash \{a,c\}$ and $y\in V(bf_jd)\backslash \{b,d\}$ (see Figure 3-b).
By \eqref{equ:ijl-uniqueness}, it is easy to see that one of the following four cycles can form a 3-feasible cycle $C(i,j,\ell)$.
These include: (a) $xf_\ell y\cup yf_ja\cup af_ix$, (b) $xf_\ell y\cup yf_jv\cup \{uv\}\cup uf_ix$, (c) $xf_\ell y\cup yf_jd\cup df_ix$, and (d) $xf_\ell y\cup yf_ju\cup \{uv\}\cup vf_ix$.

Next suppose that both $x,y$ lie in $f_k\backslash \{a,d\}$. Let $x\<y$.
If $x,y\notin V(bf_kc)$, by \eqref{equ:ijkl-uniqueness} and \eqref{equ:ijl-uniqueness} one can always find either a 3-feasible cycle $C(i,j,\ell)$ containing $af_ic\cup bf_jd\cup xf_\ell y$
or a 4-feasible cycle $C(i,j,k,\ell)$ containing $af_ic\cup bf_jd\cup bf_kc \cup xf_\ell y$.
So by symmetry between $x$ and $y$, we may assume that $y\in V(bf_kc)$.
If $x\< a$, then by \eqref{equ:ijkl-uniqueness}, one of the cycles $xf_\ell y\cup yf_kb\cup bf_jd\cup df_ix$ and $xf_\ell y\cup yf_kc\cup cf_ia\cup af_jv\cup \{uv\}\cup uf_kx$ gives a 4-feasible cycle $C(i,j,k,\ell)$.
If $a\< x\< b$, then one of the cycles $xf_\ell y\cup yf_kb\cup bf_jd\cup df_ia\cup af_kx$ and $xf_\ell y\cup yf_kc\cup cf_iu\cup \{uv\}\cup vf_jx$ gives a 4-feasible cycle $C(i,j,k,\ell)$.
Then it remains to consider $b\pq x\<y\pq c$.
If $E(P_k)\cap E(xf_ky)\neq\emptyset$,
then $\{f_k,f_\ell\}$ is not type-\1 and thus $f_\ell=uf_kx\cup xf_\ell y\cup yf_kv$,
which implies that $uf_ic\cup cf_\ell b\cup bf_jv\cup \{vu\}$ forms a 3-feasible cycle $C(i,j,\ell)$.
So $E(P_k)\cap E(xf_ky)=\emptyset$.
By \eqref{equ:ijkl-uniqueness}, one of $bf_kx$ and $yf_kc$ is $k$-unique for $\{i,j,k,\ell\}$.
Therefore, $xf_\ell y\cup yf_kc\cup cf_iu\cup \{uv\}\cup vf_jb\cup bf_kx$ forms a 4-feasible $C(i,j,k,\ell)$, proving this case.

The only possible case left is that one of $x,y$ is in $V(af_ic\cup bf_jd)\bh \{b,c\}$ and the other is in $V(f_k)$.
By symmetry, we may assume that $x\in V(af_ic)\bh\{c\} $ and $y\in V(f_k)\bh\{a,c\}$ (see Figure 3-c).
Recall that in the proof of \eqref{equ:ijl-uniqueness}, we proved $E(af_ic)\cap E(P_i)\not\subseteq E(f_\ell)$.
So either (a) $E(af_ix)\cap E(P_i)\not\subseteq E(f_\ell)$, or (b) $E(xf_ic)\cap E(P_i)\not\subseteq E(f_\ell)$.
First suppose (a) occurs.
If $y\in V(uf_kb\cup df_kv)\backslash \{a,c\}$,
then there is a $(y,a)$-path $Q$ in the cycle $\{uv\}\cup f_j$ containing $bf_jd$.
In this case, we see that $Q\cup af_ix\cup xf_\ell y$ forms a 3-feasible cycle $C(i,j,\ell)$.p
So we may assume $y\in V(bf_kd)\backslash \{c\}$.
Then one of the cycles $uf_ix\cup xf_\ell y \cup yf_kb \cup bf_jv \cup \{uv\}$ and $af_ix\cup xf_\ell y \cup yf_kd \cup df_ja$ is a 4-feasible cycle for $\{f_i,f_j,f_k,f_\ell\}$.

Therefore (b) occurs.
If $y\pq b$, then $yf_\ell x\cup xf_id\cup df_jy$ forms a 3-feasible cycle $C(i,j,\ell)$.
If $c\<y$, then $yf_\ell x\cup xf_ic\cup cf_kb\cup bf_jd \cup df_ky$ forms a 4-feasible cycle $C(i,j,k,\ell)$.
So it only remains to consider $y\in V(bf_kc)\backslash\{b,c\}$.
If $bf_ky$ is $k$-unique for $\{i,j,k,\ell\}$, then $bf_ky\cup yf_\ell x\cup xf_id\cup df_jb$ forms a 4-feasible cycle $C(i,j,k,\ell)$, a contradiction.
By \eqref{equ:ijkl-uniqueness} we have $E(yf_kc)\cap E(P_k\bh f_\ell)\neq \emptyset$.
In particular, $yf_kc\not\subseteq f_\ell$.
Also we have $bf_ky\not\subseteq f_\ell$ (as otherwise $bf_\ell x\cup xf_id \cup df_jb$ is a 3-feasible cycle $C(i,j, \ell)$).
Note that $y$ is a splitting vertex in $f_k\cup f_\ell$, so $\{f_k,f_\ell\}$ must be type-\1, with base say $f_{k'}$.
Since $E(yf_kc)\cap E(P_k\bh f_\ell)\neq \emptyset$ and $bf_ky\not\subseteq f_\ell$,
by Proposition~\ref{prop:2cycleI}, $y$ can not be the first or the last splitting vertex in $f_k\cup f_\ell$.
So $x\< y$ (as otherwise, $y$ must be the first splitting vertex in $f_k\cup f_\ell$) and $y$ is an inner vertex of $P_{k'}$.
This further implies that $f_{k'}=uf_ky\cup yf_\ell v$, and $bf_ky$ is $k'$-unique and $xf_\ell y$ is $\ell$-unique for $\{i,j,k',\ell\}$.
Because of $(b)$ and $E(bf_jd)\cap E(P_j)\not\subseteq E(f_\ell)$ (proved in the proof of \eqref{equ:ijl-uniqueness}),
one can derive that $xf_ic$ is $i$-unique and $bf_jd$ is $j$-unique for $\{i,j,k',\ell\}$.
Then $xf_\ell y\cup yf_kb\cup bf_jd\cup df_ix$ forms a 4-feasible cycle for $\{f_i,f_j,f_{k'},f_\ell\}$,
so $f_\ell\in \dW_{ij}$.
This final contradiction finishes the proof of Proposition~\ref{prop:non-feasible-triple-type2}.
\end{proof}

\begin{prop}\label{prop:AB-typeII}
For $f_\alpha\in \dA_{ij}\bh \dW_{ij}$ and $f_\beta\in \dB_{ij}\bh \dW_{ij}$,
the paths $x_\alpha f_\alpha y_\alpha$ and $x_\beta f_\beta y_\beta$ are disjoint.
\end{prop}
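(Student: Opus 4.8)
The plan is to argue by contradiction: suppose $f_\alpha\in\dA_{ij}\setminus\dW_{ij}$ and $f_\beta\in\dB_{ij}\setminus\dW_{ij}$ share a common vertex $z$. By the definitions of $\dA_{ij}$ and $\dB_{ij}$ (together with Proposition~\ref{prop:type-II-struc}, which pins down the global shape of $f_i$ and $f_j$ in terms of $f_k$), we know $f_\alpha=uf_ix_\alpha\cup x_\alpha f_\alpha y_\alpha\cup y_\alpha f_iv$ with $x_\alpha f_\alpha y_\alpha$ disjoint from $f_k\cup f_j$ and $x_\alpha,y_\alpha\in V(af_ic)$, and symmetrically for $f_\beta$ on $bf_jd$. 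The first step is to locate $z$: outside the ``ear part'' $x_\alpha f_\alpha y_\alpha$ the path $f_\alpha$ coincides with $f_i$, and outside $x_\beta f_\beta y_\beta$ the path $f_\beta$ coincides with $f_j$; since $af_ic$ and $bf_jd$ are disjoint (as noted just before Proposition~\ref{prop:type-II-struc}) and both ears avoid each other's host path, a shared vertex $z$ must be an inner vertex of $x_\alpha f_\alpha y_\alpha$ and of $x_\beta f_\beta y_\beta$.

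Next I would use the linear order $\prec$ to pin down the relative position of the four endpoints. Since $x_\alpha,y_\alpha\in V(af_ic)$ and $x_\beta,y_\beta\in V(bf_jd)$, and by Proposition~\ref{prop:type-II-struc} the vertices $a\prec b\prec c\prec d$ lie on $f_k$ with $af_ic$ following $a$ and $bf_jd$ following $b$, one reads off that, up to the symmetry between $x$ and $y$, we have $x_\alpha\prec z\prec y_\alpha$ and $x_\beta\prec z\prec y_\beta$ with all four of these within the interval $(a,d)$ of the order. The existence of a common inner point $z$ then forces the two ``ear'' arcs to interleave, i.e.\ $f_\alpha\bigtriangleup f_\beta$ contains at least two cycles through $z$; by Proposition~\ref{prop:diff1} this means $\{f_\alpha,f_\beta\}$ is a type-\1 pair. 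Applying Proposition~\ref{prop:2cycleI} to $\{f_\alpha,f_\beta\}$, one of the two ``crossed'' arcs $x_\alpha f_\alpha z\cup zf_\beta y_\beta$ or $x_\beta f_\beta z\cup zf_\alpha y_\alpha$ carries an edge of $P_\alpha$ and the other carries an edge of $P_\beta$ (and $P_\alpha\subseteq x_\alpha f_\alpha y_\alpha$, $P_\beta\subseteq x_\beta f_\beta y_\beta$ by the ear structure).

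The final step is to splice such a crossed arc into the frame $\{uv\}\cup f_i\cup f_j\cup f_k$ to produce a $4$-feasible cycle for $\{f_i,f_j,f_k,f_\ell\}$ with $\ell\in\{\alpha,\beta\}$ — exactly as in the proof of Proposition~\ref{prop:MN-typeI}, but now using the type-\2 frame and the uniqueness assertions \eqref{equ:ijkl-uniqueness} and \eqref{equ:ijl-uniqueness}. Concretely: starting from $u$, follow $f_i$ to $x_\alpha$, take the crossed arc through $z$ to $y_\beta$, follow $f_j$ back toward $b$ picking up $bf_jd$ (so $bf_jd$, which is $j$-unique, is traversed), then use $f_k$'s segment $bf_kc$ (which is $k$-unique) and close up through $af_ic$ (which is $i$-unique) and the edge $uv$; the crossed arc itself is $\ell$-unique since it contains an edge of $P_\ell$. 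Checking that one of the finitely many splicings closes into a genuine cycle hitting all four required private edges is the routine but somewhat fiddly part — and is the main obstacle — but it is entirely parallel to the case analysis already carried out in Propositions~\ref{prop:non-feasible-triple-type2} and~\ref{prop:MN-typeI}. Having built a $4$-feasible cycle, the quadruple $\{f_i,f_j,f_k,f_\ell\}$ is feasible, so $f_\ell\in\dW_{ij}$, contradicting $f_\alpha,f_\beta\notin\dW_{ij}$. Hence $x_\alpha f_\alpha y_\alpha$ and $x_\beta f_\beta y_\beta$ are disjoint.
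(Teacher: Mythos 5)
Your overall strategy is the paper's: assume the ears meet at a vertex $z$, note that $z$ must be interior to both ears and hence is a splitting vertex of $f_\alpha\cup f_\beta$ strictly between $a$ and $d$, conclude that $\{f_\alpha,f_\beta\}$ is type-\1, invoke Proposition~\ref{prop:2cycleI}, and manufacture a $4$-feasible cycle to force membership in $\dW_{ij}$. But the execution breaks at exactly the step you flag as fiddly. First, Proposition~\ref{prop:2cycleI} does not yield ``one crossed arc carries an edge of $P_\alpha$ and the other an edge of $P_\beta$.'' With, say, $\alpha<\beta$ and splitting vertices $a\prec z\prec d$, it gives $P_\alpha\subseteq af_\alpha z$ and $P_\beta\subseteq zf_\beta d$, so the edges of $P_\alpha\bh(f_i\cup f_j\cup f_\beta)$ and of $P_\beta\bh(f_i\cup f_j\cup f_\alpha)$ lie on the \emph{same} crossed arc $Q_1=x_\alpha f_\alpha z\cup zf_\beta y_\beta$ (on $Q_2$ when $\beta<\alpha$), while the other arc carries neither. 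Your parenthetical $P_\alpha\subseteq x_\alpha f_\alpha y_\alpha$ is likewise not part of the definition of $\dA_{ij}$ and need not hold (when $\alpha<i$, edges of $P_\alpha$ may lie in $L_i\cup R_i$); the only placement available is the one Proposition~\ref{prop:2cycleI} provides.

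Second, the quadruple you aim at cannot work with the cycle you describe. A $4$-feasible cycle for $\{f_i,f_j,f_k,f_\ell\}$ must lie inside $\{uv\}\cup f_i\cup f_j\cup f_k\cup f_\ell$, but any cycle containing a crossed arc uses interior edges of \emph{both} ears, i.e.\ of both $f_\alpha$ and $f_\beta$; so the only admissible quadruple is $\{f_i,f_j,f_\alpha,f_\beta\}$ (which is what the paper uses, and it already gives $f_\alpha,f_\beta\in\dW_{ij}$). Consequently the segment $bf_kc$, which lies in none of $f_i,f_j,f_\alpha,f_\beta$, may not appear in that cycle, so the $k$-uniqueness facts from \eqref{equ:ijkl-uniqueness} are irrelevant here; feasibility of $\{f_i,f_j,f_\alpha,f_\beta\}$ imposes no condition involving $P_k$ at all. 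Your concrete splicing also does not close up: it never reaches $v$ although it invokes the edge $uv$, and it would revisit $x_\alpha$. The missing content is precisely the paper's finish: take the crossed arc $Q$ carrying both private $P_\alpha$- and $P_\beta$-edges, note that some edge of $P_i\bh(f_j\cup f_\alpha\cup f_\beta)$ lies on $af_ix_\alpha$ or on $x_\alpha f_ic$ and some edge of $P_j\bh(f_i\cup f_\alpha\cup f_\beta)$ lies on $bf_jy_\beta$ or on $y_\beta f_jd$, and in each of the four cases close $Q$ into a cycle using only $uv$ and pieces of $f_i\cup f_j$, e.g.\ $uf_ka\cup af_ix_\alpha\cup Q_1\cup y_\beta f_jd\cup df_kv\cup\{vu\}$ in one of the cases.
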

\begin{proof}
By Proposition~\ref{prop:type-II-struc}, we see that $a,d$ are two splitting vertices in $f_\alpha \cup f_\beta$.
Suppose for a contradiction that $x_\alpha f_\alpha y_\alpha$ and $x_\beta f_\beta y_\beta$ have a common vertex $z$.
It is clear that $z$ is a splitting vertex in $f_\alpha \cup f_\beta$ with $a\<z\<d$.
So $\{f_\alpha, f_\beta\}$ is type-\1. By Proposition~\ref{prop:2cycleI},
exactly one of $Q_1=x_\alpha f_\alpha z\cup zf_\beta y_\beta$ and $Q_2=x_\beta f_\beta z\cup zf_\alpha y_\alpha$
contains some edges in $P_\alpha\bh (f_i\cup f_j\cup f_\beta)$ and in $P_\beta\bh (f_i\cup f_j\cup f_\alpha)$.
First assume that $Q_1$ does so.
Note that either $af_ix_\alpha$ or $x_\alpha f_ic$ contains some edge in $P_i\bh (f_j\cup f_\alpha\cup f_\beta)$,
and either $bf_jy_\beta$ or $y_\beta f_jd$ contains some edge in $P_j\bh (f_i\cup f_\alpha\cup f_\beta)$.
So there are four possibilities and it is not hard to verify that one can always find a 4-feasible cycle $C(i,j,\alpha,\beta)$ containing $Q_1$ in each possibility.
The proof for the other case (that is, $Q_2$ contains those edges mentioned above) is analogous,
and we can always find a 4-feasible cycle $C(i,j,\alpha,\beta)$ containing $Q_2$.
In any case, we see that both $f_\alpha$ and $f_\beta$ are contained in $\dW_{ij}$, a contradiction.
\end{proof}

By comparing Proposition \ref{prop:AB-typeII} with Proposition \ref{prop:MN-typeI},
we see that $\dA_{ij}, \dB_{ij}$ can play the same roles of $\dM_{ij}, \dN_{ij}$ as in Subsection \ref{Subsec:typeI}.
The next result is analogue to Proposition \ref{prop:typeI-iterate}.

\begin{prop}\label{prop:typeII-iterate}
Let $\dT$ be any subset of $\dF$ with separator $\{x,y\}$ such that $|\dT|\geq s-\frac{60\sqrt{n}}{\log n}$ and $x\<y$.
Assume that there do not exist two disjoint subsets $\dT_1$ and $\dT_2$ of $\dT$ such that $|\dT_1|+|\dT_2|\geq |\dT|-\frac{52\sqrt{n}}{\log^2 n}$,
$G(\dT_1)$ and $G(\dT_2)$ are edge-disjoint, and each $\dT_i$ contains at most $\sqrt{n}\log^2 n$ type-\2 pairs for $i\in [2]$.

Then there exists $\dT'\subseteq \dT$ with separator $\{x',y'\}$ such that $|\dT'|\geq |\dT|-\frac{103\sqrt{n}}{\log^2 n}$, $x\pq x'\<y'\pq y$, and
each $(x',y')$-path in $G(\dT')$ can be extended to two distinct $(x,y)$-paths in $G(\dT)$.
\end{prop}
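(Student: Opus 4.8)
The plan is to follow the proof of Proposition~\ref{prop:typeI-iterate} almost line by line, with type-\2 pairs replacing type-\1 pairs, the families $\dA_{ij},\dB_{ij}$ replacing $\dM_{ij},\dN_{ij}$, and Propositions~\ref{prop:non-feasible-triple-type2} and~\ref{prop:AB-typeII} replacing Propositions~\ref{prop:non-feasible-triple-typeI} and~\ref{prop:MN-typeI}. First I would argue, exactly as in the opening paragraph there, that $\dT$ must contain a type-\2 pair $\{f_i,f_j\}$ with $|\dW_{ij}|<51\sqrt n/\log^2 n$: otherwise every type-\2 pair of $\dT$ has $|\dW_{ij}|\geq 51\sqrt n/\log^2 n$, so by \eqref{equ:Wij} the set $\dT$ contains at most $\sqrt n\log^2 n$ type-\2 pairs, and then $\dT_1=\dT$, $\dT_2=\emptyset$ contradict the standing assumption. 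Fix such a pair with $i<j$, let $f_k$ be its base, and take $a\prec b\prec c\prec d$ on $f_k$ with $\ps(i,k)=af_ic$ and $\ps(j,k)=bf_jd$. Since $f_i\dif f_j\subseteq G(\dT)$ and $f_i,f_j\in\dT$, the separator $\{x,y\}$ of $\dT$ satisfies $x\preceq a$ and $d\preceq y$; recall also that $af_ic$ and $bf_jd$ are disjoint.

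Next, by Proposition~\ref{prop:non-feasible-triple-type2}, every $f_\ell\in\dT\setminus(\dW_{ij}\cup\{f_i,f_j,f_k\})$ lies in $\dA_{ij}\cup\dB_{ij}$, so setting $\dT_1=(\dT\cap\dA_{ij})\setminus(\dW_{ij}\cup\{f_i,f_j,f_k\})$ and $\dT_2=(\dT\cap\dB_{ij})\setminus(\dW_{ij}\cup\{f_i,f_j,f_k\})$ gives $|\dT_1|+|\dT_2|\geq|\dT|-52\sqrt n/\log^2 n$. Every path of $\dT_1$ coincides with $f_i$ off $af_ic$ and every path of $\dT_2$ coincides with $f_j$ off $bf_jd$; since $af_ic$ is disjoint from $f_j\cup f_k$, since $bf_jd$ is disjoint from $f_i\cup f_k$, and since the detours of $\dT_1$-paths and $\dT_2$-paths are disjoint by Proposition~\ref{prop:AB-typeII}, the graphs $G(\dT_1)$ and $G(\dT_2)$ are edge-disjoint. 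The key step --- the analogue of the ``feasible triple'' claim in Proposition~\ref{prop:typeI-iterate} --- is that for either $t\in[2]$, any type-\2 pair $\{f_\alpha,f_\beta\}$ in $\dT_t$ together with any $f_\gamma\in\dT_{3-t}$ satisfies $f_\gamma\in\dW_{\alpha\beta}$. To see this for $t=1$: the unique cycle of $f_\alpha\dif f_\beta$ lies inside $af_ic$, and, with $f_{k'}$ its base, $\ps(\alpha,k')$ and $\ps(\beta,k')$ lie inside $af_ic$ by Proposition~\ref{prop:type-II-struc}; but $f_\gamma\in\dB_{ij}$ passes through $b$ (and through $bf_jd$), which is disjoint from $af_ic$, so $f_\gamma$ cannot be a detour-path of $f_\alpha$ or of $f_\beta$, i.e. $f_\gamma\notin\dA_{\alpha\beta}\cup\dB_{\alpha\beta}$, and hence $f_\gamma\in\dW_{\alpha\beta}$ by Proposition~\ref{prop:non-feasible-triple-type2}.

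If both $\dT_1$ and $\dT_2$ had at least $51\sqrt n/\log^2 n$ paths, then a $\dT_t$ with more than $\sqrt n\log^2 n$ type-\2 pairs would give, via the claim, $\sum_{\{i,j\}}|\dW_{ij}|>\sqrt n\log^2 n\cdot 51\sqrt n/\log^2 n=51n$, contradicting \eqref{equ:Wij}; hence each $\dT_t$ has at most $\sqrt n\log^2 n$ type-\2 pairs, again contradicting the standing assumption. So one of $\dT_1,\dT_2$, say $\dT_1$, has fewer than $51\sqrt n/\log^2 n$ paths, whence $|\dT_2|\geq|\dT|-103\sqrt n/\log^2 n$; take $\dT'=\dT_2$ with separator $\{x',y'\}$. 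As every path of $\dT_2\subseteq\dB_{ij}$ agrees with $f_j$ off $bf_jd$ and contains both $b$ and $d$, one obtains $x\preceq a\prec b\preceq x'\prec y'\preceq d\preceq y$; and, exactly as in Proposition~\ref{prop:typeI-iterate}, the two routes between the separators afforded by the cycle $f_i\dif f_j$ (both of which avoid $G(\dT_2)$ because $b\preceq x'$) let every $(x',y')$-path in $G(\dT_2)$ be extended to two distinct $(x,y)$-paths in $G(\dT)$.

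The main obstacle, I expect, is the key claim together with its bookkeeping: one has to pin down where $\ps(\alpha,k')$ and $\ps(\beta,k')$ sit relative to $af_ic$ and check from the definitions of $\dA_{\alpha\beta},\dB_{\alpha\beta}$ that a path of $\dB_{ij}$ really cannot be realized as a detour-path there, and one has to locate the separator $\{x',y'\}$ and build the two extensions explicitly, just as in the type-\1 case. Beyond that the argument is a transcription of the proof of Proposition~\ref{prop:typeI-iterate}, the one numerical change being the use of $\sum_{\{i,j\}}|\dW_{ij}|\leq 51n$ in place of the bound $n$ on the number of feasible triples, which is exactly what turns the loss $52+1=53$ into $52+51=103$.
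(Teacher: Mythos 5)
Your proposal matches the paper's proof essentially step for step: the same reduction to a type-\2 pair $\{f_i,f_j\}$ with $|\dW_{ij}|<51\sqrt n/\log^2 n$, the same sets $\dT_1,\dT_2$ built from $\dA_{ij},\dB_{ij}$, the same claim that $\dT_{3-t}\subseteq \dW_{\alpha\beta}$ for any type-\2 pair $\{f_\alpha,f_\beta\}$ in $\dT_t$ (with the same $51n$ counting against \eqref{equ:Wij} explaining the loss $52+51=103$), and the same choice $\dT'=\dT_2$ with the extension along $f_j$. The one small imprecision is your remark that both extension routes are "afforded by the cycle $f_i\dif f_j$": the paper's second route is $xf_ja\cup af_ic\cup cf_kb\cup bf_jx'$, which uses the segment $bf_kc$ of $f_k$ (lying on neither $f_i$ nor $f_j$, hence not on that cycle) precisely so as to return to $b$ without entering the window between $x'$ and $y'$.
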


\begin{proof}
The proof will follow the lines as in Proposition \ref{prop:typeI-iterate}.
So we will omit details when the corresponding argument is the same as before.

First, we may assume that there is a type-\2 pair $\{f_i,f_j\}$ in $\dT$ with $|\dW_{ij}|<51\sqrt{n}/\log^2 n$.
Let $f_k$ be the base of $\{f_i,f_j\}$ and let $a\<b\<c\<d$ be vertices in $f_k$ such that $\ps(i,k)=af_ic$ and $\ps(j,k)=bf_jd$.
Let $\dT_1=(\dT\cap \dA_{ij})\bh(\dW_{ij}\cup\{f_i,f_j,f_k\})$ and $\dT_2=(\dT\cap \dB_{ij})\bh(\dW_{ij}\cup\{f_i,f_j,f_k\})$.
By Proposition~\ref{prop:non-feasible-triple-type2}, $$|\dT_1|+|\dT_2|=|\dT\bh (\dW_{ij}\cup\{f_i,f_j,f_k\})|\geq |\dT|-52\sqrt{n}/\log^2 n.$$
Let $\{x_A,y_A\}$ and $\{x_B, y_B\}$ be the separators of $\dT_1$ and $\dT_2$, respectively.
Then $a\pq x_A\<y_A\pq c$ lie in $af_ic$, $b\pq x_B\<y_B\pq d$ lie in $bf_jd$, and by Proposition \ref{prop:AB-typeII}, $G(\dT_1)$ and $G(\dT_2)$ are disjoint.
We claim that
\begin{align*}
\mbox{for any $\ell\in \{1,2\}$ and any type-\2 pair $\{f_\alpha, f_\beta\}$ in $\dT_\ell$, we have $\dT_{3-\ell}\subseteq \dW_{\alpha\beta}$.}
\end{align*}
To see this, note that $f_\alpha\dif f_\beta\subseteq G(\dT_\ell)$.
So any path in $\dT_{3-\ell}$ does not belong to $\dA_{\alpha\beta}\cup \dB_{\alpha\beta}$.
Then Proposition \ref{prop:non-feasible-triple-type2} shows that $\dT_{3-\ell}\subseteq \dW_{\alpha\beta}$, as claimed.

Suppose $|\dT_\ell|\geq 51\sqrt{n}/\log^2 n$ for every $\ell\in [2]$.
If there exists some $t\in [2]$ such that $\dT_t$ contains more than $\sqrt{n}\log^2 n$ type-\2 pairs,
then the above claim would derive a contradiction to \eqref{equ:Wij};
otherwise, each of $\dT_1$ and $\dT_2$ contains at most $\sqrt{n}\log^2 n$ type-\2 pairs, a contradiction to our assumption.
Therefore, we may assume $|\dT_1|<51\sqrt{n}/\log^2 n$ and thus $|\dT_2|\geq |\dT|-103\sqrt{n}/\log^2 n$.
Recall the separator $\{x_B, y_B\}$ of $\dT_2$ such that $x\pq a\<b\pq x_B\<y_B\pq d\pq y$ lie in $f_j$.
To show that $\dT_2$ is the desired $\dT'$,
it suffices to see that each $(x_B,y_B)$-path $R_0$ in $G(\dT_2)$ can be extended to two $(x,y)$-paths
$R_1\cup R_0\cup y_Bf_jy$ and $R_2\cup R_0\cup y_Bf_jy$ in $G(\dT)$, where $R_1=xf_jx_B$ and $R_2=xf_ja\cup af_ic\cup cf_kb\cup bf_jx_B$.
\end{proof}

We can then prove Lemma \ref{lem: type 2} promptly.

\begin{proof}[\bf Proof of Lemma \ref{lem: type 2}.]
By replacing Proposition \ref{prop:typeI-iterate} with Proposition \ref{prop:typeII-iterate},
this proof is identical to the proof of Lemma \ref{lem: type 1}.
We left the verification to readers.
\end{proof}

Now the proof of the main result Lemma~\ref{lem:normal} in this section  is completed.

\section{Reordering and partitioning $\dF$}\label{sec:reorder-dF}
The goal of this section is to show that roughly speaking, one can reorder most paths in $\dF$ and
partition them into a bounded number of intervals such that for every relevant edge $e$, paths containing $e$ in each interval are listed almost consecutively.
The precise statement is as follows.

\begin{lem}\label{lem: consecutive}
There exist disjoint subsets $\dP_1, \dP_2, \dP_3, \dP_4$ in $\dF$ and constants $\beta, \gamma$ with $\beta\geq n^{1/4}\log n$ and $\gamma=\beta \log n\leq \sqrt{n}$ such that the following hold:
\begin{itemize}
\item [1).] $\sum_{i\in [4]} |\dP_i|\geq (1-o(1))\cdot s$, all $G(\dP_i)$'s are edge-disjoint, and each $\dP_i$ contains at most $2\sqrt{n}\log^2 n$ pairs of type-\1 and type-\2.
\item [2).] For any edge $e$ in $G(\dP_i)$, let $d(e)$ denote the number of paths in $\dP_i$ containing $e$.
Then there are at most $9n\log \log n/\log n$ edges $e$ in $G(\dP_i)$ satisfying that $\beta\leq d(e)\leq \gamma$.
\item [3).] Each $\dP_i$ has an arrangement $\{g_j\}_{j\geq 1}$ and a partition of at most $3\sqrt{n}/\gamma$ intervals
such that the following holds.\footnote{Here, an {\it interval} of $\dP_i$ means a subset of $\dP_i$ consisting of paths $g_j$ for all integers $j$ in some interval $[a,b]$.}
For any edge $e$ in $G(\dP_i)$ with $d(e)\geq \gamma$, one can delete at most $3\beta$ paths in $\dP_i$ such that
there is at most one interval of $\dP_i$ which can contain some remaining paths $g_j, g_k$ with $e\in E(g_j)$ and $e\notin E(g_k)$ and moreover, all such paths $g_j, g_k$ satisfy $j<k$.
\end{itemize}
\end{lem}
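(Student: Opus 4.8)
The plan is to combine Lemma~\ref{lem:normal} with a second-moment / averaging argument on edge-multiplicities, followed by an explicit reordering step that exploits the normal structure of almost all pairs. I would start by choosing the constants: set $\beta = n^{1/4}\log n$ and $\gamma = \beta\log n = n^{1/4}\log^2 n$, so that $\gamma \le \sqrt n$ for large $n$, as required. The subsets $\dP_1,\dots,\dP_4$ will be obtained from the $\dF_1,\dots,\dF_4$ of Lemma~\ref{lem:normal} by discarding a further $o(s)$ paths, so that part~1) is essentially inherited: the edge-disjointness of the $G(\dF_i)$'s is preserved under taking subsets (by the monotonicity remark before Lemma~\ref{lem:normal}), as is the bound on the number of type-\1 and type-\2 pairs, and $\sum |\dP_i| \ge \sum|\dF_i| - o(s) \ge (1-o(1))s$.

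For part~2), I would fix one $\dF_i$ and count incidences between its paths and the edges of $G(\dF_i)$. Each path $f_j \in \dF_i$ is a $(u,v)$-path, hence has fewer than $n$ edges, so $\sum_{e \in G(\dF_i)} d(e) = \sum_{f_j \in \dF_i} |E(f_j) \cap E(G(\dF_i))| < n|\dF_i| \le ns$. Since $s = (1+o(1))\sqrt n$, this is at most $(1+o(1))n^{3/2}$. Now an edge $e$ with $\beta \le d(e) \le \gamma$ contributes at least $\beta$ to this sum; but a cruder counting will not by itself give the bound $9n\log\log n/\log n$, which is much smaller than $n^{3/2}/\beta = n^{5/4}/\log n$. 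The point is that the relevant quantity is not the total incidence count but a dyadic/entropy estimate: partitioning the range $[\beta,\gamma]$ into $\log\gamma/\beta \approx \log\log n$ dyadic blocks $[2^t\beta, 2^{t+1}\beta)$, in each block the number of edges times $2^t\beta$ is bounded by the number of cycles or feasible configurations they generate, which by \eqref{equ:(n-2)cycles} and Proposition~\ref{prop:feasible} is $O(n)$ per block; summing over the $O(\log\log n)$ blocks and dividing by $\beta \ge n^{1/4}\log n$ gives $O(n\log\log n / \log n)$. Making this rigorous — i.e. showing that many edges of a fixed moderate multiplicity force many distinct cycles in $G$ — is the step I expect to require the most care, since one must use the structural propositions of Section~3 (especially that distinct feasible tuples yield distinct cycles) to convert "many edges shared by $\approx d$ paths" into "$\approx d^2$ or $\approx nd$ distinct cycles".

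For part~3), the reordering: within each $\dP_i$ almost all pairs are normal, and normality means (loosely, via the separator machinery of Section~\ref{sec:normal-pairs}) that the primary segments of the paths nest or are disjoint in a tree-like fashion compatible with the linear order $\prec$. I would define the arrangement $\{g_j\}$ by sorting the paths according to where their "deviation region" sits along a fixed reference path (say $f_0$ or the separator interval), using Proposition~\ref{prop:orient-path} to make this canonical; consecutive paths then differ only in a localized region. The interval partition is obtained greedily: cut whenever the running union of deviation regions would exceed a window of size $\gamma$, which yields at most $3\sqrt n/\gamma$ intervals since the total length available is $O(\sqrt n \cdot \beta\log n)$-controlled — here I would feed in part~2) to guarantee that only $O(n\log\log n/\log n) = o(n/\log n)$ edges have multiplicity in the bad band $[\beta,\gamma]$, so the cutting never stalls. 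For a fixed edge $e$ with $d(e) \ge \gamma$, the paths containing $e$ form a near-interval in the arrangement: by normality, the at most $3\beta$ exceptional paths (those involved in type-\1 or type-\2 pairs with a member of this family, which number $O(\sqrt n \log^2 n)$ over all of $\dP_i$ but only $O(\beta)$ per edge by the multiplicity control) are deleted, and among the rest the containment pattern is monotone (if $g_j$ contains $e$ and $g_k$ does not with $j<k$, then no later $g_{k'}$ with $k'>k$ contains $e$), confined to a single interval. I expect the bookkeeping that simultaneously makes the $3\sqrt n/\gamma$, $3\beta$, and $9n\log\log n/\log n$ constants consistent to be routine but tedious; the genuine obstacle remains the multiplicity estimate in part~2).
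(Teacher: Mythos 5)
There is a genuine gap, and it sits exactly where you flagged it: part 2). You fix $\beta=n^{1/4}\log n$ and $\gamma=n^{1/4}\log^2 n$ in advance and then try to \emph{prove} that this particular band contains few edges, proposing that "many edges of moderate multiplicity force many distinct cycles" with an $O(n)$-per-block bound coming from \eqref{equ:(n-2)cycles} and Proposition~\ref{prop:feasible}. No such implication is available: an edge of multiplicity $d$ does not manufacture $\Omega(d)$ cycles attributable to that edge in any way these propositions provide (pairs of paths through $e$ give cycles avoiding $e$, and this only yields the useless bound $d\lesssim\sqrt n$), and incidence counting only gives about $n^{5/4}/\log n$ edges above $\beta$, as you noted. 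The lemma does not ask for this: it only asserts the \emph{existence} of some $\beta\geq n^{1/4}\log n$ with $\gamma=\beta\log n\leq\sqrt n$, and the intended argument is a pigeonhole over the roughly $\log n/(4\log\log n)$ geometric bands $[\,n^{1/4}(\log n)^{j},\,n^{1/4}(\log n)^{j+1}]$ covering $[n^{1/4}\log n,\sqrt n]$: since $G$ has only $n+s$ edges, some band contains at most $2e(G)\cdot 4\log\log n/\log n\leq 9n\log\log n/\log n$ edges, and one takes that band as $[\beta,\gamma]$. By freezing $\beta,\gamma$ you have replaced a one-line averaging step with a claim that is most likely false for some graphs and in any case not derivable from the cited tools.

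Part 3) is also far from established. The actual argument needs: (i) a \emph{specific} deletion rule defining $\dP_i$ from $\dF_i$ (remove paths lying in at least $26\sqrt n\log n$ sets $\dW_{jk}$ or in at least $n^{1/4}$ type-\1/type-\2 pairs), because the later contradiction is "this path would lie in $\beta^2/4$ distinct $\dW_{pq}$'s"; your "discard a further $o(s)$ paths" never sets this up. (ii) A structural fact (Propositions~\ref{prop:d(e)n1/4} and \ref{prop:d(e)>gamma}) that the edges with $d(e)\geq\gamma$ form two subtrees of $L$ and $R$, whose leaf edges ("transforming edges") have degree at least $\gamma$; this, not a sliding window of deviation regions, is what caps the number of intervals by $2|\dP_i|/\gamma\leq 3\sqrt n/\gamma$ — your greedy cutting "when the union of deviation regions exceeds $\gamma$" has no justification for that count. (iii) The monotone/consecutive containment claim after deleting $3\beta$ paths is not a soft consequence of "normality plus multiplicity control": it requires the explicit ordering algorithm via first/last splitting vertices and transforming edges, and then a feasible-quadruple construction (as in Proposition~\ref{prop: 101} and Cases A/B) showing that any violation produces $\beta^2/4$ feasible quadruples through a single path of $\dP_i$, contradicting (i) together with \eqref{equ:Wij}. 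As written, your part 3) asserts the conclusion ("the containment pattern is monotone, confined to a single interval") rather than proving it, and the per-edge bound of $O(\beta)$ exceptional paths is exactly the nontrivial content, not a consequence of part 2).
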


We devote the rest of this section to the proof of Lemma \ref{lem: consecutive}.
We begin by defining the desired subsets $\dP_i$ of $\dF$.
Let $\dF_1,\dF_2,\dF_3,\dF_4$ be the four disjoint subsets of $\dF$ from Lemma~\ref{lem:normal}.

\begin{dfn}
For each $i\in [4]$, let $\dP_i$ be obtained from $\dF_i$ by deleting all paths each of which is contained in
at least $26\sqrt{n}\log n$ sets $\dW_{jk}$'s
or at least $n^{1/4}$ pairs of type-\1 and type-\2 in $\dF_i$.
\end{dfn}
From Lemma~\ref{lem:normal} we see that the number of type-\1 and type-\2 pairs in each $\dF_i$ is at most 2$\sqrt{n}\log^2 n$.
Together with \eqref{equ:Wij}, we have $|\dP_i|\geq |\dF_i|-n^{1/2}/(2\log n)-2n^{1/4}\log^2 n$.
So by Lemma~\ref{lem:normal} again, we can derive that
$$|\cup_{i\in [4]} \dP_i|\geq |\cup_{i\in [4]} \dF_i|-2\sqrt{n}/\log n-8n^{1/4}\log^2 n=(1-o(1))\cdot s.$$
Since $\dP_i\subseteq \dF_i$ implies $G(\dP_i)\subseteq G(\dF_i)$,
we see that $G(\dP_i)$'s are pairwise edge-disjoint and each $\dP_i$ contains at most $2\sqrt{n}\log^2 n$ pairs of type-\1 and type-\2.
This proves the first item of Lemma \ref{lem: consecutive}.

Next we define the constants $\beta$ and $\gamma$.
For each edge $e$ contained in $G(\dP_i)$ for some $i\in [4]$, we define its {\it degree} $d(e)$ to be the number of paths in $\dP_i$ containing $e$.
So obviously $1\le d(e)\leq s+1\leq 2\sqrt{n}$ for every such $e$.
Let $A=\big\lceil\frac{\log n}{4\log \log n}\big\rceil-2$ and let $\alpha_0,\alpha_1,..., \alpha_A$ be a geometric sequence of reals such that
$\alpha_j=n^{1/4}(\log n)^{j+1}$.
By average, there is some $j_0\in [A]$ such that
the number of edges $e$ with $d(e)\in [\alpha_{j_0-1},\alpha_{j_0}]$ is at most $\frac{2e(G)}{A}\leq \frac{9n\log \log n}{\log n}.$
Let $\beta=\alpha_{j_0-1}$ and $\gamma=\alpha_{j_0}$.
So $\beta\geq n^{1/4}\log n$ and $\gamma=\beta \log n\leq \sqrt{n}$, as wanted.


It remains to show the third item of Lemma \ref{lem: consecutive}.
For this, in the rest of this section we shall focus on one of $\dP_{i}$'s and express it as $\dP$.

We need to collect some properties on the edges in $\dP$ first.
Let $\{u_0,v_0\}$ be the separator of $\dP$ with $u_0\<v_0$.
Recall the spanning trees $L$ and $R$ from \eqref{equ:LR}.
So all paths in $\dP$ contain $uLu_0\cup v_0Rv$.

\begin{prop}\label{prop:d(e)n1/4}
Let $e\in E(G(\dP))$. If there are $f_k, f_\ell \in \dP$ with $e\in E(L_k)\cap E(R_\ell)$,
then $d(e)\leq 2n^{\frac14}$.
\end{prop}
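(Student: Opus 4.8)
The plan is to exploit the hypothesis that the fixed edge $e$ lies on $L_k$ for one path $f_k\in\dP$ and on $R_\ell$ for another path $f_\ell\in\dP$. Since $L_k\subseteq L$ and $R_\ell\subseteq R$ and $e\in E(L)\cap E(R)$, the edge $e$ is "ambidextrous": depending on which path $f_m\in\dP$ we look at, $e$ may appear in $L_m$, in $P_m$, or in $R_m$. First I would partition the set of paths of $\dP$ containing $e$ into three groups accordingly: $\dA_L=\{f_m\in\dP: e\in E(L_m)\}$, $\dA_P=\{f_m\in\dP: e\in E(P_m)\}$, and $\dA_R=\{f_m\in\dP: e\in E(R_m)\}$. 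Then $d(e)\le|\dA_L|+|\dA_P|+|\dA_R|$, and it suffices to bound each of the three by $O(n^{1/4})$ — in fact one should aim for $|\dA_L|,|\dA_R|\le n^{1/4}$ and $|\dA_P|$ negligible, matching the stated bound $2n^{1/4}$.

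The term $|\dA_P|$ should be easy: for each index $m$, $e\in E(P_m)$ forces $m$ to be the (unique) index for which $e$ is an inner edge of the ear $P_m$, so at most one path of $\dP$ can have $e\in E(P_m)$ with $e$ an inner edge — and the boundary cases where $e$ is incident to $\ell_m$ or $r_m$ are absorbed into $\dA_L$ or $\dA_R$ after careful bookkeeping. So $|\dA_P|\le 1$. The real work is bounding $|\dA_L|$ and $|\dA_R|$, and by the symmetry between $L$ and $R$ (roots $u$ and $v$) it is enough to handle $|\dA_L|$. Here is where I would use the structure: if $f_m,f_{m'}\in\dA_L$, then since $e\in E(L_m)\cap E(L_{m'})$ and $L_m=uL\ell_m$, the edge $e$ lies on the unique path in $L$ from $u$ to $\ell_m$ and also to $\ell_{m'}$; writing $e=e'e''$ with $e'$ the endpoint closer to $u$ in $L$, both $\ell_m$ and $\ell_{m'}$ lie in the subtree of $L\bh e$ hanging off $e''$. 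Meanwhile $e\in E(R_\ell)$ for the fixed $f_\ell$ tells us that in $R$ the edge $e$ separates $v$ from the corresponding side. The key point is that the paths $f_m\in\dA_L$ must all pass through the edge $e$ traversing it in the $L$-direction, and then reach $v$; combined with $e\in R_\ell$, these paths, together with $f_\ell$, generate many cycles through symmetric differences. Precisely, for $f_m\in\dA_L$, since $e\in E(L_m)$ but $e\in E(R_\ell)$, the pair $\{f_m,f_\ell\}$ has $e$ on "opposite sides", and one can read off a cycle in $f_m\dif f_\ell$ that records the vertex where $P_m$ attaches; distinct $f_m$'s give distinct such cycles. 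So the number of paths in $\dA_L$ not already lying in some $\dW_{jk}$-type exceptional set is controlled by the cycle budget \eqref{equ:(n-2)cycles} together with the feasibility count Proposition~\ref{prop:feasible}, and the exceptional paths are controlled by the defining property of $\dP$ (each path of $\dP$ lies in fewer than $26\sqrt n\log n$ sets $\dW_{jk}$ and in fewer than $n^{1/4}$ type-\1/type-\2 pairs). Balancing these bounds against the choice of $\beta,\gamma$ and the size of $\dP$ yields $|\dA_L|\le n^{1/4}$.

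The main obstacle I anticipate is the combinatorial case analysis needed to extract, from a pair $f_m\in\dA_L$, $f_\ell$ fixed with $e\in E(R_\ell)$, a genuinely new cycle — one must show that as $f_m$ ranges over $\dA_L$ the attaching vertices of the ears $P_m$ (or equivalently the splitting vertices of $f_m\cup f_\ell$) are pairwise distinct, so that the resulting cycles are pairwise distinct, and then invoke \eqref{equ:(n-2)cycles}. This is delicate because two different $f_m$'s could in principle yield the same cycle if their ears attach at the same place; ruling this out will require the earlier propositions (Proposition~\ref{prop:LiRj}, Proposition~\ref{prop:orient-path}, and the classification of pairs) and a careful use of the linear order $\prec$ to show that $e\in E(L_m)\cap E(R_\ell)$ pins down the position of $\ell_m$ relative to $e$ tightly enough. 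A secondary technical point is handling paths $f_m$ for which $\{f_m,f_\ell\}$ is type-\1 or type-\2 rather than normal — these are few because $f_\ell\in\dP$ lies in at most $n^{1/4}$ such pairs, so they contribute only an additive $O(n^{1/4})$ and can be discarded at the outset, which is exactly why the $\dP_i$'s were defined with that pruning.
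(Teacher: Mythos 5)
There is a genuine gap, and it lies exactly where you placed the main weight of your argument. Your plan is to discard the (few) paths $f_m$ forming type-\1/type-\2 pairs with $f_\ell$ using the pruning in the definition of $\dP$, and then to bound the remaining paths of $\dA_L$ by extracting pairwise distinct cycles from the symmetric differences $f_m\dif f_\ell$ and invoking the cycle budget \eqref{equ:(n-2)cycles} together with the $\dW_{jk}$ counts and "balancing against $\beta,\gamma$". That mechanism cannot work: the cycle budget only caps the number of cycles by $n-2$ and the $\dW_{jk}$ count only caps membership by $26\sqrt n\log n$, so no amount of distinctness of the extracted cycles gets you anywhere near $n^{1/4}$; moreover $\beta$ and $\gamma$ do not enter this proposition at all (it is used to \emph{prove} properties of the degree function before $\beta,\gamma$ matter). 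So the step "balancing these bounds \ldots yields $|\dA_L|\le n^{1/4}$" has no actual content behind it.

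What you relegated to a "secondary technical point" is in fact the entire proof, and it makes the counting unnecessary. If $e\in E(L_m)$ for some $f_m\in\dP$ and $e\in E(R_\ell)$ for the fixed $f_\ell$, then $e$ precedes the ear $P_m$ on $f_m$ but succeeds the ear $P_\ell$ on $f_\ell$; if $\{f_m,f_\ell\}$ were not type-\1, then by Proposition~\ref{prop:diff1} the single cycle of $f_m\dif f_\ell$ would have to contain edges of both $P_m$ and $P_\ell$, which is incompatible with $e$ lying in the common part of the two paths on one side of that cycle (equivalently, one reaches a contradiction with Proposition~\ref{prop:P-notin-f}). Hence \emph{every} path of $\dA_L$ forms a type-\1 pair with $f_\ell$, and symmetrically every path of $\dA_R$ forms a type-\1 pair with $f_k$; since each path of $\dP$ lies in fewer than $n^{1/4}$ type-\1/type-\2 pairs by the definition of $\dP$, this already gives $d(e)\leq 2n^{1/4}$ (your observation that at most one path has $e\in E(P_m)$ is fine and is absorbed in the constant). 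In short: the set of "normal" pairs you intended to count is empty, the bound comes entirely from the pruning, and your proposed main argument, as stated, does not close to the claimed $n^{1/4}$ bound.
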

\begin{proof}
Suppose that $d(e)>2n^{1/4}$.
Then there are at least $2n^{1/4}$ paths $f_j$ in $\dP\bh\{f_k,f_\ell\}$ with $e\in E(L_j)\cup E(R_j)$.
If $e\in E(R_j)$, then $e\in E(R_j)\cap E(L_k)$ and by Proposition~\ref{prop:P-notin-f}, $\{f_j,f_k\}$ must be type-\1.
Similarly, if $e\in E(L_j)$, then we also can see that $\{f_j,f_\ell\}$ is type-\1.
Thus, one of $f_k$ and $f_\ell$ is contained in at least $n^{1/4}$ type-\1 pairs in $\dP$.
However this is a contradiction to the definition of $\dP$.
\end{proof}

\begin{prop}\label{prop:d(e)>gamma}
All edges $e\in E(G(\dP))$ with $d(e)\geq \gamma$ induce two edge-disjoint
trees $L_\dP$ and $R_\dP$ with roots $u_0$ and $v_0$, which are subtrees of $L$ and $R$, respectively.
\end{prop}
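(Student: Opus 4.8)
The plan is to split the set $S=\{e\in E(G(\dP)):d(e)\ge\gamma\}$ into an ``$L$-part'' and an ``$R$-part'' and to show each part is a subtree hanging from $u_0$, respectively $v_0$. First I would record a dichotomy. Since $\gamma=\beta\log n\ge n^{1/4}\log^2 n>2n^{1/4}$, Proposition~\ref{prop:d(e)n1/4} applied to any $e\in S$ shows there is \emph{no} pair $f_k,f_\ell\in\dP$ with $e\in E(L_k)$ and $e\in E(R_\ell)$; hence either $e\notin E(R_j)$ for every $f_j\in\dP$ (call $e$ of \emph{type $L$}) or $e\notin E(L_j)$ for every $f_j\in\dP$ (of \emph{type $R$}). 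A single edge lies on at most one ear, so the two alternatives are incompatible for $e\in S$ (otherwise all $d(e)\ge\gamma$ paths through $e$ would carry $e$ on their ears, forcing $d(e)\le1$), and for a type-$L$ edge $e$ all but at most one of the $d(e)$ paths through $e$ carry $e$ on their $L_j$-part, so $e\in E(L)$; symmetrically type-$R$ edges lie in $E(R)$. Let $L_\dP$ (resp.\ $R_\dP$) be the subgraph of $L$ (resp.\ $R$) whose edges are the type-$L$ (resp.\ type-$R$) edges of $S$: these are edge-disjoint sub-forests of $L,R$ with $E(L_\dP)\cup E(R_\dP)=S$, and it remains to prove each is connected with the right root.

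Next I would locate $L_\dP$ inside $L$. For a type-$L$ edge $e\in S$, pick $f_j\in\dP$ with $e\in E(L_j)=E(uL\ell_j)$; since $uLu_0$ lies on every path of $\dP$ we have $E(uLu_0)\cap E(G(\dP))=\emptyset$, hence $e\notin E(uLu_0)$. Comparing the two paths $uLu_0$ and $uL\ell_j$ in $L$, the cases ``$u_0$ is an $L$-descendant of $\ell_j$'' (which would put $e\in E(uL\ell_j)\subseteq E(uLu_0)$) and ``$u_0,\ell_j$ are $L$-incomparable'' (which would force $f_j$ to contain three edges at the $L$-meet of $u_0$ and $\ell_j$, impossible on a path) are both excluded. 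So $u_0$ is an $L$-ancestor of $\ell_j$ and $e\in E(u_0L\ell_j)$ lies in the subtree $L_{u_0}$ of $L$ spanned by $u_0$ and its descendants; thus $L_\dP\subseteq E(L_{u_0})$, and symmetrically $R_\dP\subseteq E(R_{v_0})$.

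Finally I would prove connectivity: for each $e\in E(L_\dP)$, every edge $e'$ of $L$ on the unique $u_0$--$e$ path lies in $L_\dP$. Any such $e'$ is an $L$-ancestor of $e$, and every $f_j\in\dP$ through $e$ also passes through $e'$: this is clear when $e\in E(L_j)$, and when $e\in E(P_j)$ it follows from the ear-decomposition, since the inner vertices of $P_j$ form a path hanging off $\ell_j$ in $L$, whence the $u_0$--$e$ path is contained in $L_j\cup P_j\subseteq f_j$. Therefore $d(e')\ge d(e)\ge\gamma>2n^{1/4}$, and as $e'$ lies on some $L_j$, Proposition~\ref{prop:d(e)n1/4} again forces $e'$ to be of type $L$. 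Moreover $e'\in E(G(\dP))$: if $e'$ lay on every path of $\dP$ then, using that $e'$ is type $L$ (exactly as in the previous paragraph), every $f_j\in\dP$ would contain the whole path $u_0Lz$, where $z$ is the endpoint of $e'$ farther from $u_0$; since the linear order $\prec$ refines $L$-ancestry, $z$ is a common vertex with $u_0\prec z\prec v_0$ and $u_0f_jv_0=u_0Lz\cup zf_jv_0$ for all $j$, so $\{z,v_0\}$ would be a separator candidate with $\bigcup_j zf_jv_0$ strictly smaller than $\bigcup_j u_0f_jv_0$ --- contradicting the minimality in the definition of the separator $\{u_0,v_0\}$. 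Hence $e'\in S$ is of type $L$, i.e.\ $e'\in E(L_\dP)$, so $L_\dP$ is a subtree of $L$ rooted at $u_0$ (possibly the single vertex $u_0$); symmetrically $R_\dP$ is a subtree of $R$ rooted at $v_0$.

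I expect the last step to be the crux: invoking the minimality of the separator to guarantee that the $L$-ancestors of a high-degree type-$L$ edge themselves lie in $G(\dP)$, so that $L_\dP$ has no ``gaps'' on the way down to $u_0$, together with the case analysis in the second paragraph that pins $u_0$ as an $L$-ancestor of each relevant $\ell_j$. The remaining work is routine bookkeeping with the ear-decomposition and the definitions of $L_j$, $R_j$, $f_j$, plus the standard facts that distinct ears are edge-disjoint and that each vertex receives a $\prec$-label larger than its $L$-parent when it is inserted.
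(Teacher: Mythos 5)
Your proposal is correct and takes essentially the same route as the paper: Proposition~\ref{prop:d(e)n1/4} gives the $L$/$R$ dichotomy for edges with $d(e)\geq\gamma$, and the fact that every path of $\dP$ through such an edge contains the whole segment of $L$ (resp.\ $R$) from $u_0$ (resp.\ $v_0$) down to that edge forces all intermediate edges to have degree at least $\gamma$, yielding the two edge-disjoint subtrees rooted at $u_0$ and $v_0$. You additionally verify points the paper's terse proof leaves implicit, in particular the separator-minimality argument showing the intermediate edges $e'$ really lie in $E(G(\dP))$; the only (harmless) wrinkle is that you invoke Proposition~\ref{prop:d(e)n1/4} for $e'$ before knowing $e'\in E(G(\dP))$, which is formally outside its stated hypothesis, but its proof uses only the number of paths of $\dP$ containing the edge, so the step is immediately repaired.
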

\begin{proof}
Consider such edges $e$ with $d(e)\geq \gamma\geq \alpha_0>2n^{1/4}$.
By Proposition~\ref{prop:d(e)n1/4}, there are two kinds of such edges $e$:
either (1) all paths $f_j$ in $\dP$ containing $e$ satisfy $e\in E(L_j\cup P_j)$,
or (2) all paths $f_j$ in $\dP$ containing $e$ satisfy $e\in E(P_j\cup R_j)$.
Let $e=xy$ with $x\<y$.
In the former case (1), we see that all paths in $\dP$ containing $e$ also contain the path $L[u_0,y]$,
so all edges in $L[u_0,y]$ have degree at least $d(e)\geq \gamma$.
This shows that all edges satisfying (1) induce a subtree of $L$ (with root $u_0$).
The analog also holds for the latter case. This finishes the proof.
\end{proof}

\begin{dfn}
Each leaf-edge in the rooted tree $L_\dP$ or $R_\dP$ is called a {\it transforming edge} of $\dP$.

For any paths $f,g\in \dP$, let the first and the last splitting vertices in $f\cup g$ (according to the linear ordering $\<$)
be $\ls(f,g)$ and $\rs(f,g)$, respectively.
\end{dfn}

We now define an arrangement $\{g_j\}_{j\geq 1}$ for $\dP$ in the following algorithm.

\medskip

{\noindent \bf Algorithm for ordering the paths in $\dP$.}
Initially, set $j=1$ and $\dS=\dP$. We iterate the following three steps until $\dS=\emptyset$.
\begin{itemize}
\item[(a).] If $j$=1, let $x=u_0, y=v_0$ and $\dS_1=\dS_2=\dS$.
Otherwise, we have $j\geq 2$. Let $x$ be the maximum $\ls(g_{j-1},f)$ in $\<$ over all $f\in \dS$,
and let $\dS_1$ be the set of all $f\in \dS$ containing the subpath $u_0Lx$.
Next, let $y$ be the minimum $\rs(g_{j-1},f')$ in $\<$ over all $f'\in \dS_1$,
and then let $\dS_2$ be the set of all $f'\in \dS_1$ containing the subpath $yRv_0$.

\item[(b).] If there exists some paths in $\dS_2$ containing some transforming edge $e$ in $L_\dP$ with $x\pq V(e)$,
then let $\dS_3$ be the set consisting of all such paths in $\dS_2$; otherwise, let $\dS_3=\dS_2$.
Next, if there exists some paths in $\dS_3$ containing some transforming edge $e'$ in $R_\dP$ with $V(e')\pq y$,
then let $\dS_4$ be the set consisting of all these paths in $\dS_3$; otherwise, let $\dS_4=\dS_3$.

\item[(c).] Pick any path in $S_4$ and denote it by $g_j$. 
Update $j \leftarrow j+1 $ and $\dS \leftarrow \dS\bh \{g_j\}$.
\end{itemize}

We also need some properties on $g_j$'s, which can be collected directly from the above algorithm.

\begin{prop}\label{prop:from-alg}
The following hold for any $j<k<\ell$:
\begin{itemize}
\item [(i).] If both $g_j$ and $g_\ell$ contain some subpath $u_0Lw$, then $g_k$ also contains $u_0Lw$. 
\item [(ii).] $\ls(g_{j-1},g_j)\sq \ls(g_{j-1},g_k)$, and if $\ls(g_{j-1},g_j)=\ls(g_{j-1},g_k)$ then $\rs(g_{j-1},g_j)\pq \rs(g_{j-1},g_k)$.
\item [(iii).] Suppose that $\ls(g_{j-1},g_j)=\ls(g_{j-1},g_k)$ and $\rs(g_{j-1},g_j)=\rs(g_{j-1},g_k)$.
If $g_j$ contains no transforming edge in $L_\dP$, then $g_k$ also contains no transforming edge in $L_\dP$.
\item [(iv).] Under the same conditions of (iii),
if $g_k$ contains a transforming edge in $R_\dP$ and $g_j$ does not, then $g_j$ contains a transforming edge in $L_\dP$ but $g_k$ does not.
\end{itemize}
\end{prop}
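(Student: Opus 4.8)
The plan is to read all four items off the algorithm after recording a few structural facts about $\dP$. Write $x_j,y_j,\dS_1^{(j)},\dots,\dS_4^{(j)}$ and $\dS^{(j)}$ for the objects produced at round $j$, and (for $j=1$) use the conventions of step~(a), namely $x_1=u_0$, $y_1=v_0$, and $\ls(g_0,\cdot)\equiv u_0$, $\rs(g_0,\cdot)\equiv v_0$, so that the statements for $j=1$ are trivial and it suffices to treat $j\ge 2$. First I would establish: (a) every path of $\dP$ contains $uLu_0\cup v_0Rv$; (b) for $f,g\in\dP$ the maximal common initial segment (from $u$, up to the first splitting vertex $\ls(f,g)$) equals $uL\,\ls(f,g)\subseteq L$, and symmetrically the maximal common final segment equals $\rs(f,g)\,Rv\subseteq R$ — this follows from the definitions of $L,R,f_i$ together with Proposition~\ref{prop:orient-path}; (c) by Proposition~\ref{prop:orient-path} the vertices of any $f\in\dP$ increase along $f$ in $\<$, so $f$ cannot contain an edge $zz'$ with $z\<w\<z'$ whenever $w\in V(f)$; and (d) each $f\in\dP$ contains at most one transforming edge of $L_\dP$ and at most one of $R_\dP$, because $f\cap L_\dP$ (resp. $f\cap R_\dP$) is a union of $L_\dP$-root-paths (resp. $R_\dP$-root-paths) which are pairwise comparable — being subpaths of the single path $f$ all containing the root — hence is itself a single root-path, which meets at most one leaf-edge.

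The engine of the proof is the observation, immediate from step~(a), that for $j\ge2$ one has $\ls(g_{j-1},g_j)=x_j$ and $\rs(g_{j-1},g_j)=y_j$: indeed $g_j\in\dS^{(j)}$ and $x_j$ is the $\<$-maximum of $\ls(g_{j-1},\cdot)$ over $\dS^{(j)}$, so $\ls(g_{j-1},g_j)\pq x_j$; conversely $g_j\in\dS_1^{(j)}$ gives $g_j\supseteq u_0Lx_j$, while $g_{j-1}\supseteq u_0Lx_j$ as well (by (b), the common initial segment of $g_{j-1}$ with the maximiser is $uLx_j$), so $g_{j-1},g_j$ agree up to $x_j$ and $\ls(g_{j-1},g_j)\sq x_j$; the $\rs$-statement is symmetric. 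Item~(ii) follows at once: for $k>j$ the path $g_k$ is still in $\dS^{(j)}$, so $\ls(g_{j-1},g_k)\pq x_j=\ls(g_{j-1},g_j)$, and if equality holds then (just as for $g_j$) $g_k\supseteq u_0Lx_j$, i.e.\ $g_k\in\dS_1^{(j)}$, whence $\rs(g_{j-1},g_k)\sq y_j=\rs(g_{j-1},g_j)$.

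For item~(i) I would prove the local claim: \emph{if $g_{r-1}\supseteq u_0Lw$ and some $g_q$ with $q\ge r$ still in $\dS^{(r)}$ has $g_q\supseteq u_0Lw$, then $g_r\supseteq u_0Lw$.} Granting it, apply it successively for $r=j+1,\dots,k$ with the fixed witness $g_\ell$ (which lies in $\dS^{(r)}$ for every $r\le k<\ell$) to get $g_k\supseteq u_0Lw$. To prove the claim: $g_{r-1},g_q$ agree up to $w$, so $w\pq\ls(g_{r-1},g_q)\pq x_r$; moreover $w$ and $x_r$ both lie on the $L$-root-path carrying the left part of $g_{r-1}$, along which $\<$ agrees with the tree order by Proposition~\ref{prop:orient-path}, so $w$ precedes $x_r$ on that path and $u_0Lw$ is an initial segment of $u_0Lx_r$; since $g_r\in\dS_1^{(r)}$ contains $u_0Lx_r$, it contains $u_0Lw$.

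Finally, items~(iii) and~(iv) come from step~(b), which refines $\dS_2^{(j)}$ first along transforming $L_\dP$-edges and then along transforming $R_\dP$-edges. Under the hypotheses of~(iii), the engine gives $\ls(g_{j-1},g_j)=\ls(g_{j-1},g_k)=x_j$ and $\rs(g_{j-1},g_j)=\rs(g_{j-1},g_k)=y_j$, so by (b) both $g_j,g_k$ lie in $\dS_2^{(j)}$ and both agree with $g_{j-1}$ — hence with each other — on $uLx_j$ and on $y_jRv$; with~(c), any transforming edge of $g_j$ or $g_k$ is either \emph{active} for round $j$ ($x_j\pq V(e)$ for an $L_\dP$-edge, or $V(e)\pq y_j$ for an $R_\dP$-edge) or lies in the common part $uLx_j\cup y_jRv$ and is then shared by $g_j$ and $g_k$. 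If $g_j$ has no transforming $L_\dP$-edge, the $\dS_3$-refinement cannot have fired (else $g_j\in\dS_3^{(j)}$ would have an active one), so $\dS_3^{(j)}=\dS_2^{(j)}$ and no path of $\dS_2^{(j)}$ has an active $L_\dP$-edge; then $g_k$ has no active $L_\dP$-edge and shares none with the $L_\dP$-edge-free $g_j$, hence has none — this is~(iii). For~(iv), assume additionally that $g_k$ carries a transforming $R_\dP$-edge while $g_j$ carries none; that $R_\dP$-edge of $g_k$ cannot be shared with $g_j$, so it is active. Were $g_j$ to have no active transforming $L_\dP$-edge, then as above $\dS_3^{(j)}=\dS_2^{(j)}\ni g_k$, so the $\dS_4$-refinement fires on $g_k$'s active $R_\dP$-edge and forces every path of $\dS_4^{(j)}\ni g_j$ to carry an active $R_\dP$-edge — contradiction; hence $g_j$ has an active transforming $L_\dP$-edge, which by~(d) is its only transforming $L_\dP$-edge, so $g_j$ has no inactive one. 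Now suppose $g_k$ had a transforming $L_\dP$-edge $e$; by~(c), $e$ either lies in $uLx_j$ (and is then contained in $g_j$, impossible as $g_j$'s only transforming $L_\dP$-edge is active) or is active, putting $g_k\in\dS_3^{(j)}$; but then $g_k$ also carries an active $R_\dP$-edge, so the $\dS_4$-refinement again forces an active $R_\dP$-edge into $g_j$ — contradiction. Hence $g_k$ has no transforming $L_\dP$-edge, proving~(iv).

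I expect the delicate part to be the last paragraph: keeping the two-stage refinement of step~(b) straight while classifying each transforming edge as active or shared, and using~(d) that a path meets at most one leaf-edge of $L_\dP$ (and of $R_\dP$). Everything else is direct bookkeeping off the algorithm and Proposition~\ref{prop:orient-path}, as the word ``directly'' in the statement suggests.
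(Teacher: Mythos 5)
Your proposal is correct, and it is exactly the direct verification that the paper omits (the paper only asserts these properties "can be collected directly from the above algorithm"): the decisive point in both is the identification $\ls(g_{j-1},g_j)=x_j$ and $\rs(g_{j-1},g_j)=y_j$ coming from step (a), together with the fact that common initial/final segments of two paths of $\dP$ are root-paths $uL\,\ls(\cdot,\cdot)$ and $\rs(\cdot,\cdot)Rv$, after which (i)--(iv) follow by bookkeeping through $\dS_1,\dots,\dS_4$. Two cosmetic remarks only: your fact (b) also uses Proposition~\ref{prop:P-notin-f} (to rule out the common initial segment swallowing an entire ear and leaving $L$), and in (i) the degenerate case $w\pq u_0$ is trivial since $uLu_0$ lies in every path of $\dP$; neither affects the argument.
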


We then partition $\dP=\{g_j\}_{j\geq 1}$ into a bounded number of subsets in the next definition.

\begin{dfn}
For any transforming edge $e$ of $\dP$, the path $g_j\in \dP$ containing $e$ with the minimum $j$ is called a {\it fence} of $\dP$.
Let $\{g_{j_k}\}_{1\leq k< k_\dP}$ be the set of all fences of $\dP$, where the sequence $\{j_k\}$ is increasing with $k$.
Then the set $\dI_k=\{g_j: j_k\leq j< j_{k+1}\}$ for each $0\leq k< k_\dP$ is called an {\it interval} of $\dP$,
where we define $j_0=1$ and $j_{k_\dP}=|\dP|+1$.
\end{dfn}

So $\dP$ is partitioned into at most $k_\dP$ intervals.
By Proposition \ref{prop:d(e)>gamma}, any path in $\dP$ has at most one leaf-edge in $L_\dP$ and at most one leaf-edge in $R_\dP$.
This shows that $k_\dP\leq 2|\dP|/\gamma\leq 3\sqrt{n}/\gamma$, as desired.


Towards Lemma \ref{lem: consecutive}, we first prove the following weaker version.
Let us recall the definitions of feasible triples and quadruples, stated right before Subsection~\ref{subsec:feasible-cycles}.

\begin{prop}\label{prop: 101}
For any edge $e$ in $G(\dP)$, let $g_i$ be the path in $\dP$ containing $e$ with minimum $i$.
Then one can delete at most $2\beta$ paths from $\dP$ such that for any $k>j>i$,
if some remaining path $g_j$ does not contain $e$, then every remaining path $g_k$ does not contain $e$.
\end{prop}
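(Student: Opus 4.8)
The plan is to show that, after discarding a bounded number of paths, the members of $\dP$ containing $e$ occupy a prefix of the tail $g_i,g_{i+1},\dots$ of the arrangement. If $d(e)\le 2\beta$ we delete all (at most $2\beta$) paths containing $e$; then no surviving path contains $e$ and the required implication holds vacuously. So assume $d(e)>2\beta>2n^{1/4}$, and apply Proposition~\ref{prop:d(e)n1/4}: there are no $f_k,f_\ell\in\dP$ with $e\in E(L_k)\cap E(R_\ell)$, hence either (i) every path of $\dP$ containing $e$ uses $e$ inside $L_m\cup P_m$, or (ii) every such path uses $e$ inside $P_m\cup R_m$. These two cases are symmetric under swapping the roots $u,v$ and the trees $L,R$, but the ordering algorithm is \emph{not} symmetric, so I would treat them separately.

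Consider case (i). Write $e=xy$ with $x\<y$, so $x$ is the parent of $y$ in the tree $L$ rooted at $u$. If $g_m$ uses $e$ inside $L_m=uL\ell_m$ then $y$ lies on $uL\ell_m$, i.e., $g_m$ contains the subpath $uLy$ of $L$; conversely any $g_m$ with $uLy\subseteq g_m$ contains $e$. Since $e\in G(\dP)$ lies on some $u_0g v_0$, its endpoints satisfy $u_0\pq x\<y\pq v_0$, and every path of $\dP$ contains $uLu_0$; one checks $u_0$ is an ancestor of $y$ in $L$ and $uLy=uLu_0\cup u_0Ly$, so ``$g_m$ uses $e$ inside $L_m$'' is equivalent to ``$u_0Ly\subseteq g_m$''. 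At most one path of $\dP$ uses $e$ inside its ear (the edge $e$ belongs to a unique ear $P_m$, or to $P_0$); using the ear-decomposition — the inner vertices of $P_m$ are inserted into $L$ in the subtree below $\ell_m$ — one verifies this path too satisfies $u_0Ly\subseteq g_m$, and hence so does the first path $g_i$ containing $e$. Thus $\{m: e\in E(g_m)\}=\{m: u_0Ly\subseteq g_m\}$, which by Proposition~\ref{prop:from-alg}(i) is an interval $[p,q]$ of the arrangement, and minimality of $i$ forces $p=i$. So in case (i) no deletion is needed: for $k>j>i$ with $g_j\not\ni e$ we get $j\notin[i,q]$, hence $j>q$, hence $k>q$ and $g_k\not\ni e$.

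Case (ii) is where I expect the real difficulty, and where the budget of $2\beta$ deletions is spent. Now ``$g_m\ni e$'' is a condition on the $R$-side, equivalent to $xRv_0\subseteq g_m$, but Proposition~\ref{prop:from-alg}(i) controls only $L$-prefixes $u_0Lw$, and its $R$-analogue fails in general because the algorithm breaks ties in favour of $\ls$. I would instead use parts (ii)--(iv) of Proposition~\ref{prop:from-alg}: within any maximal block of consecutive $g_m$ sharing the same value $\ls(g_{m-1},g_m)$ the values $\rs(g_{m-1},g_m)$ are nondecreasing, so inside each block the paths containing $e$ form a prefix (or a suffix), while the transforming-edge clauses (iii)--(iv) govern how consecutive blocks are glued together. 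Deleting the surviving paths that carry the transforming edges of $R_\dP$ lying below $e$ — there are $O(\beta)$ of them, since each interval $\dI_k$ begins with a single fence and, by Proposition~\ref{prop:d(e)>gamma}, consecutive leaf-edges of $R_\dP$ have degree at least $\gamma$ — should collapse the set of containing paths into one interval, after which the prefix argument of case (i) applies. If this direct approach is too delicate, the alternative is a counting argument: whenever $i<j<k$ with $g_i,g_k\ni e$ and $g_j\not\ni e$, the triple $\{g_i,g_j,g_k\}$ is feasible or $g_j\in\dW_{ik}$, and then Proposition~\ref{prop:feasible} together with the bound on $\dW$-memberships built into the definition of $\dP$ limits the number of offending $g_j$.

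The main obstacle is exactly case (ii): converting $\ls/\rs$-monotonicity and the transforming-edge structure of $\dP$ into a single interval of containing paths after removing at most $2\beta$ paths, rather than merely ``a few'' intervals or ``a few'' exceptions. Everything in case (i) — the dichotomy, the translation of ``$g_m\ni e$'' into a tree-path containment, and the interval conclusion — is routine given Propositions~\ref{prop:d(e)n1/4} and~\ref{prop:from-alg}.
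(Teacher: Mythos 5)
Your reduction to $d(e)>2\beta$ and your Case (i) are essentially right: when every path of $\dP$ through $e=xy$ meets $e$ inside $L_m\cup P_m$, containing $e$ is equivalent to containing the subpath $u_0Ly$ of $L$, so Proposition~\ref{prop:from-alg}(i) makes the containing paths consecutive and no deletion is needed; this corresponds to the short step in the paper's argument where the possibility $y\preceq a$ is excluded via Proposition~\ref{prop:from-alg}(i). But the proposition is not proved, because Case (ii) --- which you yourself identify as the main obstacle --- is exactly the content of the statement, and neither of your two sketches closes it. The first sketch rests on the claim that only $O(\beta)$ paths carry transforming edges of $R_\dP$ lying below $e$; this is quantitatively false, since every edge of $R_\dP$, in particular every such transforming edge, lies on at least $\gamma=\beta\log n\gg\beta$ paths of $\dP$, so the proposed deletion set exceeds the budget unless it is empty. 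Moreover no argument is given that after any such deletion the paths containing $e$ occupy a single block: the interleaving you must rule out is caused by paths \emph{not} containing $e$ appearing before paths that do, and that interaction is governed by the $\rs$ tie-breaking and by type-\1/type-\2 configurations, not by the fences alone.

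The second sketch (``every interleaved triple $\{g_i,g_j,g_k\}$ is feasible, or $g_j\in\dW_{ik}$'') is an unproven assertion, and establishing something of this kind is precisely where all the work lies; it is also not what the paper actually proves. In the paper's proof one assumes at least $\beta$ missing paths $\dB$ after $g_i$ and at least $\beta$ later containing paths $\dC$ (this bookkeeping is what converts the argument into a $2\beta$-deletion statement), takes the first missing path $g_j$ so that $g_{j-1}\ni e$, and, for the many $g_k\in\dC$ forming normal pairs with $g_{j-1},g_j$, uses Proposition~\ref{prop:from-alg}(ii) to force $\{g_{j-1},g_j\}$ to be type-\1; its base $f_\ell$ (which need not belong to $\dP$) together with Proposition~\ref{prop:2cycleI} locates the relevant $P$-edges, and then for $\beta/2$ further paths $g_{j'}\in\dB$ a $4$-feasible cycle for $\{g_{j-1},g_{j'},g_k,f_\ell\}$ is exhibited. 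This places the single path $g_{j-1}$ in more than $26\sqrt n\log n$ sets $\dW_{pq}$, contradicting the definition of $\dP$. None of that structural analysis (normality of the auxiliary pairs, the splitting-vertex configuration, verification that the required ear-edges lie on the constructed cycle) appears in your proposal, and a bare count of feasible triples would not by itself yield the specific $2\beta$ bound. As it stands, the heart of Proposition~\ref{prop: 101} is missing.
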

\begin{proof}
Suppose this fails for some edge $xy$ in $G(\dP)$ with $x\<y$.
Let $g_{i}$ be the path in $\dP$ containing $xy$ with minimum $i$.
We may assume that $d(xy)>2\beta$ and there are at least $\beta$ paths in $\dP$ behind $g_{i}$ which does not contain $xy$.
Let $\dB$ be the set of the first $\beta$ such paths.
Note that paths in $\dB$ may not be consecutive in $\dP$.
Let $g_{j_0}$ be the path in $\dB$ with maximum $j_0$.
Let $\dC$ be the set consisting of all paths in $\dP$ behind $g_{j_0}$ and containing $xy$,
and we may also assume that $|\dC|\geq \beta$.

Let $g_j$ be the path in $\dB$ with minimum $j$.
Then $g_j$ does not contain $xy$, while $g_{j-1}$ does. 
As $\beta\geq \alpha_0\geq n^{1/4}\log n$,
we see that there are at least $\beta-2n^{1/4}\geq \beta/2$ paths $g_k$ in $\dC$
such that $\{g_{j-1}, g_k\}$ and $\{g_j, g_k\}$ are normal pairs. 
From now on, by $g_k$ we mean any one of such paths in $\dC$.

Let $a\<b$ be the two splitting vertices in $g_{j-1}\cup g_k$ (see Figure 4-a).
Since $xy\in E(g_{j-1})\cap E(g_k)$, we have either $y\pq a$ or $b\pq x$.
If $y\pq a$, then $xy$ is in the tree $L$ and thus both $g_{j-1}$ and $g_k$ contain the subpath $u_0Ly$, while $g_j$ does not.
This contradicts Proposition \ref{prop:from-alg} (i).
Hence, $b\pq x$.
Since $xy\notin E(g_j)$, we see that $\rs(g_{j-1},g_j)$ and $\rs(g_k,g_j)$ are the same vertex, say $z$, with $b\pq x\<y\pq z$.

\begin{figure}[ht!]\label{fig:A}
\begin{minipage}{0.5 \linewidth}
        \centering
\begin{tikzpicture}[scale=0.8,line cap=round,line join=round,>=triangle 45,x=1cm,y=1cm]
\clip(-0.75,-1) rectangle (8,2);
\draw [line width=1pt] (0,0)-- (1,0);
\draw [line width=1pt] (4.5,0)--(7.5,0);
 \draw [line width=1pt] (1,0 ) to[out=90,in=90]  (4.5,0);

\draw [line width=1pt] (1,0 ) to[out=-50,in=-140]  (4.5,0);
\draw [line width=0.5pt] (3.5,0.9 ) to[out=20,in=110]  (6,0);

 \draw [line width=0.5pt] (1.5,0.74)  .. controls (1.75,2) and  ( 2.5,2) ..  (3,1.03);

\begin{scriptsize}
\draw [fill=black] (0,0) circle (1.5pt);
\draw[color=black] (-0.25 ,0) node {$u$};
\draw [fill=black] (7.5,0) circle (1.5 pt);
\draw[color=black] (7.5,-0.25) node {$v$};

\draw [fill=black] (1,0) circle (1.5pt);
\draw[color=black] (1.3 ,0) node {$a$};
\draw [fill=black] (4.5,0) circle (1.5pt);
\draw[color=black] (4.5,-0.25) node {$b$};

\draw [fill=black] (1.5,0.74) circle (1.5pt);
\draw[color=black] (1.7,0.6) node {$w'$};
\draw [fill=black] (3,1.03) circle (1.5pt);
\draw[color=black] (3,0.8) node {$x'$};
\draw [fill=black] (3.5,0.9 ) circle (1.5pt);
\draw[color=black] (3.5,0.7) node {$y'$};
\draw [fill=black] (6,0) circle (1.5pt);
\draw[color=black] (6 ,-0.25) node {$z$};

\draw [fill=black] (5,0) circle (1.5pt);
\draw[color=black] (5 ,-0.25) node {$x$};
\draw [fill=black] (5.4,0) circle (1.5pt);
\draw[color=black] (5.4,-0.25) node {$y$};

\draw[color=black] (0.6,0.3) node {$g_{j-1}$};
\draw[color=black] (0.4, -0.3) node {$g_k$};

\draw[color=black] (3 ,1.6) node {$g_j$};
\draw[color=black] (4.2 ,1.3 ) node {$g_j$};
\end{scriptsize}
\end{tikzpicture}
\caption*{(a)}
\end{minipage}
\begin{minipage}{0.5 \linewidth}
        \centering
\begin{tikzpicture}[scale=0.8,line cap=round,line join=round,>=triangle 45,x=1cm,y=1cm]
\clip(-0.75,-1) rectangle (8,2);
\draw [line width=1pt] (0,0)-- (1,0);
\draw [line width=1pt] (4.5,0)--(7.5,0);
 \draw [line width=1pt] (1,0 ) to[out=90,in=90]  (4.5,0);

\draw [line width=1pt] (1,0 ) to[out=-50,in=-140]  (4.5,0);
\draw [line width=0.5pt] (3.5,0.9 ) to[out=20,in=110]  (6,0);

\draw [line width=0.5pt] (1.5,0.74) to[out=90,in=120]  (3,1.03);
\draw  [line width=0.8pt,dash pattern=on 2pt off 2pt] (1,0)  .. controls (1.1,3) and  ( 6,2) ..  (6.8,0);
\begin{scriptsize}
\draw [fill=black] (0,0) circle (1.5pt);
\draw[color=black] (-0.25 ,0) node {$u$};
\draw [fill=black] (7.5,0) circle (1.5 pt);
\draw[color=black] (7.5,-0.25) node {$v$};

\draw [fill=black] (1,0) circle (1.5pt);
\draw[color=black] (1.6 ,0) node {$a(a')$};
\draw [fill=black] (4.5,0) circle (1.5pt);
\draw[color=black] (4.5,-0.25) node {$b$};

\draw [fill=black] (1.5,0.74) circle (1.5pt);
\draw[color=black] (1.7,0.6) node {$w$};
\draw [fill=black] (3,1.03) circle (1.5pt);
\draw[color=black] (3,0.8) node {$x'$};
\draw [fill=black] (3.5,0.9 ) circle (1.5pt);
\draw[color=black] (3.5,0.7) node {$y'$};
\draw [fill=black] (6,0) circle (1.5pt);
\draw[color=black] (6 ,-0.25) node {$z$};

\draw [fill=black] (6.8,0) circle (1.5pt);
\draw[color=black] (6.8,-0.25)node {$z'$};

\draw [fill=black] (5,0) circle (1.5pt);
\draw[color=black] (5 ,-0.25) node {$x$};
\draw [fill=black] (5.4,0) circle (1.5pt);
\draw[color=black] (5.4,-0.25) node {$y$};

\draw[color=black] (0.6,0.3) node {$g_{j-1}$};
\draw[color=black] (0.4, -0.3) node {$g_k$};
\draw[color=black] (1.2 ,1.3) node {$g_{j'}$};
\draw[color=black] (3 ,1.4) node {$g_j$};
\draw[color=black] (4.2 ,1.3) node {$g_j$};
\end{scriptsize}
\end{tikzpicture}
\caption*{(b)}
\end{minipage}
\caption*{Figure 4}
\end{figure}

Let $w=\ls(g_{j-1},g_j)$. We claim that $a\< w\< b$.
Note that $\{g_j,g_k\}$ is a normal pair and by Proposition \ref{prop:from-alg} (ii), $w\sq ~\ls(g_{j-1},g_k)=a$.
If $w\sq b$, then we have $g_j=ug_{j-1}w\cup wg_jz\cup zg_{j-1}v$,
which implies that $\{g_j,g_k\}$ is type-\2, a contradiction.
So $a\pq w\<b$. Now assume that $w=a$.
Let $w'=\ls(g_{j}, g_k)$. So $w'\in V(ag_kb)\bh \{b\}$ and $g_j=ug_kw'\cup w'g_jz\cup zg_kv$.
We see $a=\ls(g_{j-1},g_j)=\ls(g_{j-1}, g_k)$, but $\rs(g_{j-1},g_j)=z\s b=\rs(g_{j-1},g_k)$, a contradiction to Proposition \ref{prop:from-alg} (ii).
This proves the claim that $a\< w\< b$.
Now $a,z$ are the only two splitting vertices in $g_j\cup g_k$.

If $\{g_{j-1}, g_j\}$ is not type-\1, then $g_j=ug_{j-1}w\cup wg_jz\cup zg_{j-1}v$ and further,
we see that $g_{j-1}$ is a crossing path of $\{g_j,g_k\}$, contradicting that $\{g_j,g_k\}$ is a normal pair.
Thus, $\{g_{j-1}, g_j\}$ is type-\1.

Since $a,z$ are the only two splitting vertices in $g_j\cup g_k$,
we may assume that the splitting vertices in $g_{j-1}\cup g_j$ are $w,x',y',z$ such that $a\prec w\prec x'\preceq  y'\prec b\prec z$ lie in $g_{j-1}$.
We write $f_{\pi(t)}=g_t$ for each $t\in \{j-1, j, k\}$.
If $P_{\pi(j)}\subseteq yg_jz$, then $\{g_j,g_k\}$ is type-\2 with a crossing path $g_{j-1}$, a contradiction.
So by Proposition~\ref{prop:2cycleI}, we can deduce that $P_{\pi(j)}\subseteq wg_jx'$ and $P_{\pi(j-1)}\subseteq y'g_{j-1}z$.
In particular, $y'g_{j-1}b$ contains some edge in $P_{\pi(j-1)}$.
Let the base of $\{g_{j-1}, g_j\}$ be $f_\ell$. 
Then Proposition~\ref{prop:2cycleI} also shows that $y$ is an inner vertex of $P_\ell$ and thus $y'f_\ell z=y'g_jz$ contains some edge in $P_\ell$.

Recall the set $\dB$. For any chosen $g_k\in \dC$ from above,
we can find at least $\beta-3n^{1/4}\geq \beta/2$ paths $g_{j'}\in \dB$ such that $g_{j'}$ forms a normal pair with any path in $\{g_{j-1},g_j,g_k\}$.
Notice that we do not require $\{g_{j'},f_\ell\}$ to be a normal pair as $f_\ell$ may not be in $\dP$.
Let $z'=\rs(g_{j'},g_{j-1})$.
As $g_{j'}$ does not contain $xy$, we see $z'\succeq y$ and thus $z'$ is also the vertex $\rs(g_{j'},g_k)$.
Since $\{g_{j'},g_{j-1}\}$ and $\{g_{j'},g_k\}$ both are normal,
$g_{j'}\bh (g_{j-1}\cup g_k)$ forms a path, say $a'g_{j'}z'$, where $a'\in V(u_0g_{j-1}x\cup ag_kb)$. See Figure 4-b for an illustration.

We claim that $a'=a$.
Since $j<j'<k$ and both $g_{j-1}$ and $g_k$ contain $ug_{j-1}a=uLa$,
by Proposition \ref{prop:from-alg}, $g_{j'}$ also contains $ug_{j-1}a$, which implies that $a'\sq a$.
If $a'\sq b$, then $g_{j'}=ug_{j-1}a'\cup a'g_{j'}z'\cup z'g_{j-1}v$ and $\{g_{j'},g_k\}$ forms a type-\1 pair, a contradiction.
If $a'\in V(ag_{j-1}b)\bh \{a,b\}$, then $\{g_{j'},g_k\}$ is type-\2 with a crossing path $g_{j-1}$, a contradiction.
If $a'\in V(ag_{k}b)\bh \{a,b\}$, then $\{g_{j'},g_{j-1}\}$ is type-\2 with a crossing path $g_{k}$, again a contradiction.
Therefore we have $a'=a$.

Next we claim that $ag_{j'}z'$ is internally disjoint with $y'g_jz$.
Suppose on the contrary that there exists a splitting vertex $w'\in V(ag_{j'}z')\cap V(y'g_jz)$ in $g_{j'}\cup g_j$.
Since $\{g_{j'},g_j\}$ is normal, we have $z'=z$ and $w'g_{j'}z=w'g_jz$.
However, as $a\<y'\<w'\<z$ lie in $g_j$, we see that $g_j$ is a crossing path of $\{g_{j'},g_{j-1}\}$, a contradiction.
This proves the claim.

Recall that $y'g_{j-1}b$ contains some edge in $P_{\pi(j-1)}$ and $y'f_\ell z=yg_jz$ contains some edge in $P_\ell$.
Thus, $ag_kb\cup bg_{j-1}y' \cup y'f_\ell z \cup zg_kz' \cup z'g_{j'}a$ is a 4-feasible cycle for $\{g_{j-1}, g_{j'}, g_k, f_\ell\}$.

Putting everything together, there are at least $\beta/2$ choices of $g_k\in \dC$ and
subject to a fixed $g_k$, there are at least $\beta/2$ choices of $g_{j'}\in \dB$
such that $\{g_{j-1}, g_k, g_{j'}\}$ is contained in a feasible quadruple.
That is, $g_{j-1}\in \dP$ is contained in at least $\beta^2/4\geq \sqrt{n}\log^2n/4>26\sqrt{n}\log n$ distinct $\dW_{pq}$'s.
This final contradiction (to the definition of $\dP$) completes the proof of Proposition~\ref{prop: 101}.
\end{proof}

Finally, we are ready to complete the proof of Lemma \ref{lem: consecutive}.

\begin{proof}[\bf Proof of Lemma \ref{lem: consecutive}.]
Putting everything above together, it suffices for us to prove the following statement.
For any edge $e$ in $G(\dP)$ with $d(e)\geq \gamma$,
one can delete at most $3\beta$ paths in $\dP$ such that
there is at most one interval of $\dP$ which can contain some remaining paths $g_j, g_k$ with $e\in E(g_j)$ and $e\notin E(g_k)$ and moreover, all such paths $g_j, g_k$ satisfy $j<k$.

Suppose this fails for some edge $e$ with $d(e)\geq \gamma$.
Let $g_i$ be the path in $\dP$ containing $e$ with minimum $i$.
By Proposition~\ref{prop: 101}, one can delete at most $2\beta$ paths from $\dP$ such that for any $k>j>i$,
if some remaining path $g_j$ does not contain $e$, then every remaining path $g_k$ does not contain $e$.
If $g_i$ is a fence, then only the last interval of $\dP$, which has some remaining path containing $e$, can contain some remaining path $g_\ell$ with $e\notin E(g_\ell)$,
and the conclusion holds.
Therefore, we may assume that $g_i$ is not a fence.
It will be enough for us to show that by deleting extra $\beta$ paths,
every remaining path in the interval containing $g_i$ contains the edge $e$.

Let $\dA$ be the set of all paths before $g_i$ in this interval.
We may assume that $|\dA|\geq \beta$.
By Proposition \ref{prop:d(e)>gamma}, $e$ is in either $L_\dP$ or $R_\dP$. We consider two cases (see Figure 5 for an illustration).

\medskip

\noindent {\bf Case A. $e$ is in $L_\dP$.}

\medskip

We obverse that $g_i$ contains no transforming edge which lies below $e$ in $L_\dP$
(as otherwise, $g_i$ would be the first path in $\dP$ containing this transforming edge and thus become a fence).
There exists at least one transforming edge $e'$ in $L_\dP$ such that $e$ lies in the subpath of $L_\dP$ between $u_0$ and $e'$.
Let $\dB$ be the set of all paths in $\dP$ containing $e'$.
Then $|\dB|\geq \gamma$ and all paths in $\dB$ also contain $e$.
There are at least $\gamma-2n^{1/4}\geq \beta$ paths $g_k\in \dB$ such that $\{g_k,g_{i-1}\}$ and $\{g_k,g_{i}\}$ are normal.
From now on, we fix such a path $g_k$.
Note that $g_{i-1}$ does not contain $e$, while $g_i$ and $g_k$ contain $e$.

Let $a, b$ be the two splitting vertices in $g_i\cup g_k$.
Clearly $V(e)\pq a\<b$.
Also we see that $\ls(g_{i-1},g_i)$ and $\ls(g_{i-1},g_k)$ are the same vertex, say $w$, such that $w\pq V(e)$.
We make the following claim.

\medskip

{Claim A.} Let $z=\rs(g_{i-1},g_i)$.
Then $z$ is an inner vertex in $ag_ib$.

\begin{proof}[Proof of Claim A]
Let $z'=\rs(g_{i-1},g_k)$. If $z'\<b$, then clearly $z'g_{i-1}v=z'g_kv$, implying that $z'\<b=z$, a contradiction to Proposition \ref{prop:from-alg} (ii).
So $z'\succeq b$.
If $z\succeq b$, then we see $z=z'$.
In this case, $g_k$ contains some transforming edge (that is $e'$) in $L_\dP$, while $g_i$ does not, a contradiction to Proposition \ref{prop:from-alg} (iii).
If $z\pq a$, then $wg_{i-1}v=wg_{i-1}z\cup zg_iv$ and thus $\{g_{i-1}, g_k\}$ must be type-\1, a contradiction to the choice of $g_k$.
Hence, $z$ is an inner vertex in $af_ib$ (and also $\rs(g_{i-1},g_k)=b$).
\end{proof}

If $\{g_{i-1},g_i\}$ is not type-\1, then $g_{i-1}\bh (g_i\cup g_k)$ is a subpath $wg_{i-1}z$,
which shows that $\{g_{i-1},g_k\}$ is type-\2 with a crossing path $g_i$, a contradiction.

Therefore, $\{g_{i-1},g_i\}$ is type-\1 with base $f_\ell$ and splitting vertices $w\<x\pq y\<z$.
Since $w,b$ are the only two splitting vertices in $g_{i-1}\cup g_k$,
we see that $w\< a\< x\pq y\< z\< b$ lie in $g_i$.
Let $f_{\pi(t)}=g_t$ for every $t\in \{i-1, i, k\}$.
If $P_{\pi(i-1)}\subseteq wg_{i-1}x$, then again $\{g_{i-1},g_k\}$ is type-\2 with a crossing path $g_i$.
So by Proposition \ref{prop:2cycleI}, $P_{\pi(i)}\subseteq wg_ix$, $P_{\pi(i-1)}\subseteq yg_{i-1}z$,
and $x, y$ are inner vertices of $P_\ell$.
This shows that
\begin{align}\label{equ:Lem4.2}
\mbox{$ag_ix$ contains some edge in $P_{\pi(i)}$ and $xg_{i-1}w=xf_\ell w$ contains some edge in $P_\ell$.}
\end{align}
There exist at least $\beta-3n^{1/4}-1\geq \beta/2$ paths $g_j$ in $\dA\bh\{f_\ell\}$
such that $g_j$ forms a normal pair with any path in $\{g_{i-1},g_i,g_k\}$.
Any such path $g_j$ does not contain the edge $e$.
So we can set $w'=\ls(g_{j},g_i)=\ls(g_{j},g_k)$ such that $w'\pq V(e)$.
Then as $g_j$ forms a normal pair with each of $g_i$ and $g_k$,
we can infer that $g_{j}\bh (g_i\cup g_k)$ is a path,
say $w'g_{j}z'$ with $z'\in V(y'g_iv\cup ag_kb)$, where we write $e=x'y'$ with $x'\<y'$.
Note that $z'\in V(y'g_ia)$ is impossible, as otherwise we can deduce that $ag_jv=ag_iv=ag_kv$, a contradiction.
If $z'\in V(ag_kb)\bh \{a,b\}$, then $\{g_{j}, g_i\}$ is type-\2 with a crossing path $g_k$, a contradiction.
If $z'\in V(ag_ib)\bh \{a,b\}$, then $\{g_{j}, g_k\}$ is type-\2 with a crossing path $g_i$, a contradiction.
So we have $z'\in V(bg_iv)$ and $g_j=ug_iw'\cup w'g_jz'\cup z'g_iv$.
Let $f_{\pi(j)}=g_j$.
Whenever the two paths $w'g_jz'$ and $wg_{i-1}x=wf_\ell x$ intersect or not (if they do then $w=w'$),
$xf_\ell w\cup wg_iw' \cup w'g_jz'$ contains an $(x,z')$-path $Q$ which contains some edge in $P_\ell$ and some edge in $P_{\pi(j)}$.
By \eqref{equ:Lem4.2},
we see that $Q\cup z'g_ka\cup ag_ix$ contains a 4-feasible cycle for $\{g_i,g_{j},g_k,f_\ell\}$,

Putting all together, there are at least $\beta$ choices of $g_k\in \dB$ and
subject to a fixed $g_k$, there are at least $\beta/2$ choices of $g_j\in \dB$
such that $\{g_{i}, g_j, g_{k}\}$ is contained in a feasible quadruple.
So $g_i$ in $\dP$ is contained in at least $\beta^2/2>26\sqrt{n}\log n$ distinct $\dW_{jk}$'s.
This contradicts the definition of $\dP$ and completes the proof of Case A.
\begin{figure}[ht!]\label{fig:A}
\begin{minipage}{0.33 \linewidth}
        \centering
\begin{tikzpicture}[scale=0.7,line cap=round,line join=round,>=triangle 45,x=1cm,y=1cm]
\clip(-0.75,-1.5) rectangle (7.75,2 );
\draw [line width=1pt] (0,0)-- (3,0);
\draw [line width=1pt] (5,0)-- (7,0);

\draw [line width=1pt] (3,0 ) .. controls (3.4,1 ) and  ( 4.6,1 ) ..  ( 5,0);

\draw [line width=1pt] (3,0 ) to[out=-70,in=-110]  (5,0);
\draw [line width=0.8pt,dash pattern=on 2pt off 2pt] (1.5,0 ) to[out=-70,in=-110]  (6,0);

\draw [line width=0.5pt] (0.8 ,0 )  to[out=50,in=150]    (3.5,0.62);

\draw [line width=0.5pt] (4,0.75)   .. controls (4.2,0.9 ) and (4.68,1.2 )   ..   (4.7,0.46);
\begin{scriptsize}
\draw [fill=black] (0,0) circle (1.5pt);
\draw[color=black] (0 ,-0.25) node {$u$};
\draw [fill=black] (7,0) circle (1.5 pt);
\draw[color=black] (7.1 ,-0.25) node {$v$};

\draw [line width=2pt] (2.0,0)-- (2.4,0);
\draw [fill=black] (2.0,0) circle (1.5 pt);
\draw [fill=black] (2.4,0) circle (1.5 pt);
\draw[color=black] (2.2 ,-0.25) node {$e$};

\draw [line width=2pt] (3.8,-0.535)-- (4.2,-0.535);
\draw [fill=black] (3.8,-0.535) circle ( 1.5 pt);
\draw [fill=black] (4.2,-0.535) circle ( 1.5 pt);
\draw[color=black] (4 ,-0.77) node {$e'$};

\draw [fill=black] (3,0) circle (1.5pt);
\draw[color=black] (2.9 ,-0.25) node {$a$};
\draw [fill=black] (5,0) circle (1.5pt);
\draw[color=black] (5.1 ,-0.25) node {$b$};

\draw [fill=black] (0.8,0) circle (1.5pt);
\draw[color=black] (0.8 ,-0.25) node {$w$};
\draw [fill=black] (1.5,0) circle (1.5pt);
\draw[color=black] (1.5, 0.3) node {$w'$};

\draw [fill=black] (3.5,0.62) circle (1.5pt);
\draw[color=black] (3.5 ,0.4) node {$x$};
\draw [fill=black] (4,0.75) circle ( 1.5pt);
\draw[color=black] (4 ,0.5) node {$y$};
\draw [fill=black] (4.7,0.46) circle ( 1.5pt);
\draw[color=black] (4.6 , 0.25) node {$z$};

\draw [fill=black] (6,0) circle (1.5pt);
\draw[color=black] (5.8,  0.3) node {$z'$};

\draw[color=black] (2,1.1) node {$g_{i-1}$};
\draw[color=black] (4.4 ,1.2) node {$g_{i-1}$};
\draw[color=black] (6.5, 0.3) node {$g_i$};
\draw[color=black] (6.5, -0.3) node {$g_k$};
\draw[color=black] (2,-1.3) node {$g_j$};
\end{scriptsize}
\end{tikzpicture}
\caption*{\footnotesize Case A: $z'\in V(bg_iv)$}
\end{minipage}
\begin{minipage}{0.33\linewidth}
        \centering
\begin{tikzpicture}[scale=0.7,line cap=round,line join=round,>=triangle 45,x=1cm,y=1cm]
\clip(-1.75,-1.5) rectangle (6.75,2 );
\draw [line width=1pt] (-1,0)-- (3,0);
\draw [line width=1pt] (5,0)-- (6,0);

\draw [line width=1pt] (3,0 ) .. controls (3.4,1 ) and  ( 4.6,1 ) ..  ( 5,0);
\draw [line width=1pt] (3,0 ) to[out=-70,in=-110]  (5,0);
\draw [line width=0.8pt,dash pattern=on 2pt off 2pt] (0.5,0 )  .. controls (1,-1  ) and (2,-1  )   ..  (2.5,0);  
\draw [line width=0.5pt] (-0.2 ,0 )  to[out=50,in=150]    (3.5,0.62);
\draw [line width=0.5pt] (4,0.75)   .. controls (4.2,0.9 ) and (4.68,1.2 )   ..   (4.7,0.46);
\begin{scriptsize}
\draw [fill=black] (-1,0) circle (1.5pt);
\draw[color=black] (-1 ,-0.25) node {$u$};
\draw [fill=black] (6,0) circle (1.5 pt);
\draw[color=black] (6.1 ,-0.25) node {$v$};

\draw [line width=2pt] (1.3,0)-- (1.7,0);
\draw [fill=black] (1.3,0) circle (1.5 pt);
\draw [fill=black] (1.7,0) circle (1.5 pt);
\draw[color=black] (1.5 ,-0.25) node {$e$};

\draw [line width=2pt] (3.8,-0.535)-- (4.2,-0.535);
\draw [fill=black] (3.8,-0.535) circle ( 1.5 pt);
\draw [fill=black] (4.2,-0.535) circle ( 1.5 pt);
\draw[color=black] (4 ,-0.77) node {$e'$};

\draw [fill=black] (3,0) circle (1.5pt);
\draw[color=black] (2.9 ,-0.25) node {$a$};
\draw [fill=black] (5,0) circle (1.5pt);
\draw[color=black] (5 ,-0.25) node {$b$};

\draw [fill=black] (-0.2,0) circle (1.5pt);
\draw[color=black] (-0.2 ,-0.25) node {$w$};
\draw [fill=black] (0.5,0) circle (1.5pt);
\draw[color=black] (0.5, 0.3) node {$w'$};
\draw [fill=black] (2.5,0) circle (1.5pt);
\draw[color=black] (2.5, 0.3) node {$z'$};

\draw [fill=black] (3.5,0.62) circle (1.5pt);
\draw[color=black] (3.5 ,0.4) node {$x$};
\draw [fill=black] (4,0.75) circle ( 1.5pt);
\draw[color=black] (4 ,0.5) node {$y$};
\draw [fill=black] (4.7,0.46) circle ( 1.5pt);
\draw[color=black] (4.6 , 0.25) node {$z$};

\draw[color=black] (2,1.3) node {$g_{i-1}$};
\draw[color=black] (4.4 ,1.3) node {$g_{i-1}$};
\draw[color=black] (5.5, 0.3) node {$g_i$};
\draw[color=black] (5.5, -0.3) node {$g_k$};
\draw[color=black] (1.5,-1 ) node {$g_j$};
\end{scriptsize}
\end{tikzpicture}
\caption*{\footnotesize Case A: $z'\in V(y'g_ia)$}
\end{minipage}
\begin{minipage}{0.33 \linewidth}
        \centering
\begin{tikzpicture}[scale=0.7,line cap=round,line join=round,>=triangle 45,x=1cm,y=1cm]
\clip(-0.75,-1.5) rectangle (7.75,2 );
\draw [line width=1pt] (0,0)-- (2,0);
\draw [line width=1pt] (4,0)-- (7,0);

\draw [line width=1pt] (2,0 ) .. controls (2.4,1 ) and  ( 3.6,1 ) ..  ( 4,0);

\draw [line width=1pt] (2,0 ) to[out=-70,in=-110]  (4,0);

\draw [line width=0.5pt] (1,0 )  to[out=50,in=150]    (2.3,0.46);

 \draw [line width=0.5pt] (3.7,0.46)   .. controls (4,1.1 ) and (5.8,1.2 )   ..   (6,0);
\begin{scriptsize}
\draw [fill=black] (0,0) circle (1.5pt);
\draw[color=black] (0 ,-0.25) node {$u$};
\draw [fill=black] (7,0) circle (1.5 pt);
\draw[color=black] (7.1 ,-0.25) node {$v$};

\draw [fill=black] (2,0) circle (1.5pt);
\draw[color=black] (1.9 ,-0.25) node {$a$};
\draw [fill=black] (4 ,0) circle (1.5pt);
\draw[color=black] (4.1 ,-0.25) node {$b$};

\draw [line width=2pt] (4.8,0)-- (5.2,0);
\draw [fill=black] (4.8,0) circle (1.5pt);
\draw [fill=black] (5.2,0) circle (1.5pt);
\draw[color=black] (5 ,-0.25) node {$e$};

\draw [line width=2pt] (2.8,0.75)-- (3.2,0.75);
\draw[color=black] (3 , 0. 5) node {$e_1$};
\draw [fill=black] (2.8,0.75) circle ( 1.5pt);
\draw [fill=black] (3.2,0.75) circle ( 1.5pt);

\draw [fill=black] (1,0) circle (1.5pt);
\draw[color=black] (1 ,-0.25) node {$w$};

\draw [fill=black] (2.3,0.46) circle (1.5pt);
\draw [fill=black] (3.7,0.46) circle ( 1.5pt);

\draw [fill=black] (6,0 ) circle ( 1.5pt);
\draw[color=black] (6 , -0.25) node {$z$};

\draw[color=black] (1.5,0.8) node {$g_{i-1}$};
\draw[color=black] (5 ,1.2) node {$g_{i-1}$};
\draw[color=black] (6.5, 0.3) node {$g_i$};
\draw[color=black] (6.5, -0.3) node {$g_k$};
\end{scriptsize}
\end{tikzpicture}
\caption*{\footnotesize Case B: A key step for Claim B}
\end{minipage}
\caption*{Figure 5}
\end{figure}
\medskip

\noindent {\bf Case B. $e$ is in $R_\dP$.}

\medskip
The proof of this case is similar to that of Case A.
We also see that $g_i$ contains no transforming edge which lies in $R_\dP$ below $e$ (as otherwise $g_i$ is a fence).
So there exists a transforming edge $e_0$ in $R_\dP$ such that $e$ lies in the path of $R_\dP$ between $v_0$ and $e_0$.
Let $\dC$ be the set of all paths in $\dP$ containing $e_0$.
Then $|\dC|\geq \gamma$ and all paths in $\dC$ also contain $e$.
There are at least $\gamma-2n^{1/4}\geq \beta$ paths $g_k\in \dC$ such that $\{g_k,g_{i-1}\}$ and $\{g_k,g_i\}$ are normal.
Let $a,b$ be the only two splitting vertices in $g_k\cup g_i$.
Note that $g_i$ and $g_k$ contain $e$, while $g_{i-1}$ does not.
So $\rs(g_{i-1},g_i)$ and $\rs(g_{i-1},g_k)$ are the same vertex, say $z$, such that $a\<b\pq V(e)\pq z$.

We need the following claim, which plays the parallel role as Claim A in the previous case.

\medskip

{Claim B.} Let $w=\ls(g_{i-1},g_i)$.
Then $w$ is an inner vertex in $ag_ib$.

\begin{proof}[Proof of Claim B]
First suppose that $b\pq w$. Since $ug_{i-1}w=ug_iw$ contains $ag_ib$,
we see that $\{g_{i-1},g_k\}$ must be a type-\1 pair, a contradiction to the choice of $g_k$.

To prove Claim B, it suffices to consider when $w\pq a$ (see Figure 5).
Let $w'=\ls(g_{i-1},g_k)$.
If $w'\s a$, then clearly $ug_{i-1}w'=ug_kw'$, implying that $w=a\< w'$, a contradiction to Proposition \ref{prop:from-alg} (ii).
So $w'\pq a$.
In this case, we see $w=w'\in V(ug_ia)$, that is, $g_i$ and $g_k$ share the same first and last splitting vertices with $g_{i-1}$.
Since $g_k$ contains a transforming edge in $R_\dP$ while $g_i$ does not,
by Proposition \ref{prop:from-alg} (iv),
$g_i$ contains a transforming edge (say $e_1$) in $L_\dP$ and $g_k$ does not.
Clearly, such $e_1$ is in $ag_ib$.
If $g_{i-1}$ does not contain $e_1$,
then $g_i$ is the first path in $\dP$ containing $e_1$ and thus $g_i$ is a fence, a contradiction.
Hence, $g_{i-1}$ also contains $e_1\in E(L_\dP)$.
By Proposition \ref{prop:d(e)>gamma}, $g_{i-1}$ and $g_i$ contains all edges in the subpath of $L$ from $u$ to $e_1$.
This indicates that $\ls(g_{i-1},g_i)=w\s a$, a contradiction.
Therefore, $w$ must be an inner vertex in $ag_ib$.
\end{proof}

The remaining proof is analogous to Case A.
In fact, once we are equipped with Claims A and B, these two cases are identical if one revises the linear ordering $\<$.
For simplicity, we omit the detailed verification of the remaining proof of Case B here.
We finish the proof of Lemma \ref{lem: consecutive}.
\end{proof}


\section{Proof of the main result}
Now we are ready to prove Theorem \ref{thm: main}, by using a counting strategy motivated by \cite{Lin}.
Let $n$ be sufficiently large and $G$ be an $n$-vertex 2-connected graph with $n+s$ edges which does not contain two cycles of the same length.
Suppose for a contradiction that $s\geq (1+o(1))\sqrt{n}$.

Let $\dP_1, \dP_2, \dP_3, \dP_4$ be subsets of $\dF$ and $\beta, \gamma$ be constants from Lemma \ref{lem: consecutive}.
So $\beta\geq n^{1/4}\log n$, $\gamma=\beta \log n\leq \sqrt{n}$, $s':=\sum_{i\in [4]} |\dP_i|\geq (1-o(1))s$, and each $\dP_i$ has an arrangement $\{g_j\}_{j\geq 1}$.

Let $\Phi$ be the set of all normal pairs $\{g_j,g_k\}$ such that $g_j,g_k$ are from the same interval of $\dP_i$ for some $i\in [4]$ and $\beta\leq |j-k|\leq \sqrt{\beta\gamma}$.
For $e\in E(G)$ and $\{g_j,g_k\}\in \Phi$, let $\delta(e,g_j,g_k)$ be an index function such that $\delta=1$ if exactly one of the paths $g_j,g_k$ contains $e$ and $\delta=0$ otherwise.

In the coming proof, we are estimating the summation $\Sigma$ of $\delta(e,g_j,g_k)$ over all $e\in E(G)$ and $\{g_j,g_k\}\in \Phi$.
First, let us bound the size of $\Phi$.
The total number of pairs $\{g_j,g_k\}$ such that $g_j, g_k$ lie in some $\dP_i$ and $\beta\leq |j-k|\leq \sqrt{\beta\gamma}$
is at least $\sum^{\sqrt{\beta\gamma}}_{r=\beta}(s'-4r)\geq (1-o(1))s\sqrt{\beta\gamma}=(1-o(1))s\beta\sqrt{\log n},$
where the inequality holds because $s'=\sum_{i\in [4]} |\dP_i|\geq (1-o(1))s$ and every $r\leq \sqrt{\beta\gamma}=\gamma/\sqrt{\log n}\leq \sqrt{n/\log n}=o(s)$.
By Lemma \ref{lem: consecutive}, there are at most $3\sqrt{n}/\gamma$ intervals in $\dP_i$ for each $i\in [4]$,
so there are at most $(12\sqrt{n}/\gamma)\cdot (\sqrt{\beta\gamma})^2=12\sqrt{n}\beta$ pairs $\{g_j,g_k\}$ such that $g_j, g_k$ lie in different intervals of some $\dP_i$ and $|j-k|\leq \sqrt{\beta\gamma}$.
We also know that each $\dP_i$ contains at most $2\sqrt{n}\log^2 n$ pairs of type-\1 and type-\2.
Putting all together, as $s\geq (1+o(1)\sqrt{n}$ and $\beta\geq n^{1/4}\log n$, we see that $$|\Phi|\geq (1-o(1))s\beta\sqrt{\log n}-12\sqrt{n}\beta-8\sqrt{n}\log^2 n\geq (1-o(1))s\beta\sqrt{\log n}.$$

Since each $\{g_j,g_k\}$ in $\Phi$ is a normal pair, the difference $g_j\dif g_k$ induces a cycle.
One can see that all such cycles $g_j\dif g_k$ are distinct and thus have different lengths.
Observe that $\sum_{e\in E(G)} \delta(e,g_j,g_k)$ equals the length of the cycle $g_j\dif g_k$, denoted by $|g_j\dif g_k|$.
Therefore, the summation
\begin{equation}\label{equ: 1}
\Sigma=\sum_{\{g_j,g_k\}\in \Phi}\left(\sum_{e\in E(G)} \delta(e,g_j,g_k)\right)=\sum_{\{g_j,g_k\}\in \Phi}|g_j\dif g_k|\geq \sum_{\ell=1}^{|\Phi|}\ell\geq \frac{|\Phi|^2}{2}\geq \left(\frac{1}{2}-o(1)\right) s^2\beta\gamma.
\end{equation}

Next we fix $e\in E(G)$ and estimate the sum of $\delta(e,g_j,g_k)$ over all $\{g_j,g_k\}\in \Phi$.
Note that any paths $g_j,g_k\in \dP_i$ share the same edges out of $E(G(\dP_i))$,
so it suffices for us to consider edges $e\in E(G(\dP_i))$ for some $i\in [4]$ (while for other edges $e$, the above sum always equals zero).

For $e\in E(G(\dP_i))$ with $d(e)\geq \gamma$, by Lemma \ref{lem: consecutive} one can delete $3\beta$ paths in $\dP_i$ such that
there is at most one interval of $\dP_i$ which can contain some remaining paths $g_j, g_k$ with $e\in E(g_j)$ and $e\notin E(g_k)$ and moreover, all such paths $g_j, g_k$ satisfy $j<k$.
Let $\Phi(e)$ be the set of pairs in $\Phi$ containing at least one of the above $3\beta$ paths we delete.
So $|\Phi(e)|\leq 3\beta\cdot 2\sqrt{\beta\gamma}=6\beta\gamma/\sqrt{\log n}$,
and for any $\beta\leq r\leq \sqrt{\beta\gamma}$,
there are at most $r$ pairs $\{g_j,g_k\}$ in $\Phi\backslash \Phi(e)$ satisfying $\delta(e,g_j,g_k)=1$.
For $e\in E(G(\dP_i))$ with $d(e)\leq \beta$, we let $\Phi(e)$ be the set of all pairs in $\Phi$ which contains at least one path containing $e$.
Then it is clear that $|\Phi(e)|\leq \beta\cdot 2\sqrt{\beta\gamma}=2\beta\gamma/\sqrt{\log n}$ and $\delta(e,g_j,g_k)=0$ for all $\{g_j,g_k\}$ in $\Phi\backslash \Phi(e)$.
Hence, for every $e$ with $d(e)\geq \gamma$ or $d(e)\leq \beta$, we have
$$\sum_{\{g_j,g_k\}\in \Phi} \delta(e,g_j,g_k)\le \left(\sum_{\{g_j,g_k\}\in \Phi\bh \Phi(e)} \delta(e,g_j,g_k)\right) +|\Phi(e)|\le \sum_{r=\beta}^{\sqrt{\beta\gamma}}r+6\beta\gamma/\sqrt{\log n}\leq \left(\frac{1}{2}+o(1)\right)\beta\gamma.$$
By Lemma \ref{lem: consecutive}, there are at most $9n\log \log n/\log n$ edges $e$ with $\beta\leq d(e)\leq \gamma$.
Each such $e$ is contained in at most $\gamma$ paths $g_j$, while at most $2\sqrt{\beta\gamma}$ paths $g_k$ can satisfy $\{g_j,g_k\}\in \Phi$.
So at most $\gamma\cdot 2\sqrt{\beta\gamma}=2\beta^2(\log n)^{3/2}$ pairs $\{g_j,g_k\}\in \Phi$ can give $\delta(e,g_j,g_k)=1$.
This implies that
$$\sum_{\beta\leq d(e)\leq \gamma}\sum_{\{g_j,g_k\}\in \Phi} \delta(e,g_j,g_k)\leq \frac{9n\log \log n}{\log n}\cdot 2\beta^2(\log n)^{3/2}=o(n\beta\gamma).$$
Adding all edges $e\in E(G)$ together, we can obtain the following upper bound
\begin{equation}\label{equ: 2}
\Sigma=\sum_{e\in E(G)}\left(\sum_{\{g_j,g_k\}\in \Phi}\delta(e,g_j,g_k)\right)\le (n+s)\cdot\left(\frac{1}{2}+o(1)\right)\beta\gamma + o(n\beta\gamma)= \left(\frac{1}{2}+o(1)\right)n\beta\gamma.
\end{equation}
Combining with \eqref{equ: 1} and \eqref{equ: 2}, we can derive that $s^2\leq (1+o(1))n$ and thus $s\leq (1+o(1))\sqrt{n}$.
This finishes the proof of Theorem \ref{thm: main}.
\qed

\section{Concluding remarks}
In this paper, we prove that any $n$-vertex 2-connected graph $G$ with no two cycles of the same length has at most $n+\sqrt{n}+o(\sqrt{n})$ edges.
We remark that through a more careful calculation, the present proof can show that $G$ contains at most $n+\sqrt{n}+20 \sqrt{n/\log n}$ edges.

We also would like to point out that all statements in Sections~3--5 can hold only under the assumption \eqref{equ:(n-2)cycles}.
Indeed, throughout Sections~3--5, all we need in the proofs is just the upper bound on the total number of cycles,
while the stronger assumption that $G$ contains at most one cycle of length $i$ for each $3\leq i\leq n$ was only used in Section~6.
That also says, the structural constraints we develop in Sections~3--5 also apply to 2-connected graphs $G$ with relatively many edges but few cycles.
In particular, an analog of Lemma \ref{lem: consecutive} still holds for 2-connected graphs $G$ with $n$ vertices, $n+s$ edges and $m$ cycles, assuming that $m\ll s^3$.
This may shed some light on an old problem of Entringer from 1973, which asks to determine all graphs $G$ with exactly one cycle of each length between 3 and $|V(G)|$ (see \cite{BM}, p. 247, Problem 10).

\bibliographystyle{unsrt}

\medskip
	
{\it E-mail address:} jiema@ustc.edu.cn

\medskip
	
{\it E-mail address:} ytc@mail.ustc.edu.cn

\end{document}